
\documentclass[a4, 12pt]{amsart}
\usepackage{amssymb}
\usepackage{amstext}
\usepackage{amsmath}
\usepackage{amscd}
\usepackage{latexsym}
\usepackage{amsfonts}
\usepackage{color}
\usepackage{enumerate}
\usepackage{textcomp}
\usepackage[all]{xy}

\theoremstyle{plain}
\newtheorem{thm}{Theorem}[section]
\newtheorem*{thm*}{Theorem}
\newtheorem*{cor*}{Corollary}
\newtheorem*{defn*}{Definition}

\newtheorem{prop}[thm]{Proposition}
\newtheorem{lem}[thm]{Lemma}
\newtheorem{cor}[thm]{Corollary}
\newtheorem{claim}{Claim}
\newtheorem*{claim*}{Claim}

\theoremstyle{definition}
\newtheorem{defn}[thm]{Definition}
\newtheorem{ex}[thm]{Example}
\newtheorem{rem}[thm]{Remark}

\newtheorem{question}[thm]{Question}

\newtheorem{setting}[thm]{Setting}

\theoremstyle{remark}

\numberwithin{equation}{thm}
\def\Hom{\mathrm{Hom}}
\def\Max{\mathrm{Max}}

\def\Ext{\mathrm{Ext}}

\def\bbZ{\mathbb{Z}}

\def\Ker{\mathrm{Ker}}

\def\e{\mathrm{e}}
\def\m{\mathfrak m}
\def\n{\mathfrak n}

\def\p{\mathfrak p}

\def\Z{\Bbb Z}

\def\bbI{\Bbb I}

\newcommand{\rmc}{\mathrm{c}}

\newcommand{\rme}{\mathrm{e}}
\newcommand{\rmf}{\mathrm{f}}

\newcommand{\rmr}{\mathrm{r}}

\newcommand{\rmE}{\mathrm{E}}

\newcommand{\rmH}{\mathrm{H}}

\newcommand{\rmK}{\mathrm{K}}

\newcommand{\rmQ}{\mathrm{Q}}

\newcommand{\fka}{\mathfrak{a}}

\newcommand{\fkc}{\mathfrak{c}}

\newcommand{\fkm}{\mathfrak{m}}

\newcommand{\fkp}{\mathfrak{p}}
\newcommand{\fkq}{\mathfrak{q}}

\newcommand{\fkC}{\mathfrak{C}}

\def\Ann{\mathrm{Ann}}
\def\Ass{\mathrm{Ass}}

\def\GCD{\operatorname{GCD}}
\tolerance=9999

\setlength{\oddsidemargin}{1.1mm}
\setlength{\evensidemargin}{1.1mm}
\setlength{\topmargin}{-1.cm}
\setlength{\headheight}{1.2cm}
\setlength{\headsep}{1.0cm}
\setlength{\textwidth}{15.6cm}
\setlength{\textheight}{21.5cm}

\begin{document}

\setlength{\baselineskip}{15pt}
\title{Almost Gorenstein rings}
\pagestyle{plain}
\author{Shiro Goto}
\address{Department of Mathematics, School of Science and Technology, Meiji University, 1-1-1 Higashi-mita, Tama-ku, Kawasaki 214-8571, Japan}
\email{goto@math.meiji.ac.jp}
\author{Naoyuki Matsuoka}
\address{Department of Mathematics, School of Science and Technology, Meiji University, 1-1-1 Higashi-mita, Tama-ku, Kawasaki 214-8571, Japan}
\email{matsuoka@math.meiji.ac.jp}
\author{Tran Thi Phuong}
\address{Department of Information Technology and Applied Mathematics,
Ton Duc Thang University, 98 Ngo Tat To Street, Ward 19, Binh Thanh District, Ho Chi Minh City, Vietnam}
\email{sugarphuong@gmail.com}
\thanks{{\it Key words and phrases:}
Cohen-Macaulay local ring, Gorenstein ring, almost Gorenstein ring, Hilbert coefficient, canonical ideal}
\endgraf

\begin{abstract}
The notion of almost Gorenstein ring given by Barucci and Fr{\"o}berg \cite{BF} in the case where the local rings are analytically unramified is generalized, so that it works well also in the case where the rings are analytically ramified. As a sequel, the problem of when the endomorphism algebra $\m : \m$ of $\m$ is a Gorenstein ring is solved in full generality,  where $\m$ denotes the maximal ideal in a given Cohen-Macaulay local ring of dimension one. Characterizations of almost Gorenstein rings are given in connection with the principle of idealization. Examples are explored. 
\end{abstract}

\maketitle

{\footnotesize \tableofcontents }

\section{Introduction} 
This paper studies a special class of one-dimensional Cohen-Macaulay local rings, which we call {\it almost Gorenstein} rings. Originally, almost Gorenstein rings were introduced by V. Barucci and R. Fr{\"o}berg \cite{BF}, in the case where the local rings are analytically unramified. They developed  in \cite{BF}  a very  nice theory of almost Gorenstein rings and gave many interesting results, as well.  Our paper aims at an alternative definition of almost Gorenstein ring which we can apply also to the rings that are not necessarily analytically unramified. One of the purposes of such an alternation is to go beyond  a gap in the proof of \cite[Proposition 25]{BF} and solve in full generality the problem of when the algebra $\m : \m$ is a Gorenstein ring, where $\m$ denotes the maximal ideal in a given Cohen-Macaulay local ring of dimension one.

Before going into more details, let us fix our notation and terminology, which we maintain throughout this paper.

Let $R$ be a Cohen-Macaulay local ring with maximal ideal $\m$ and $\dim R = 1$. We denote by $\rmQ (R)$ the total quotient ring of $R$.  Let $\rmK_R$ be the canonical module of $R$. Then we say that an ideal $I$ of $R$ is {\it canonical}, if $I \ne R$ and $I \cong \rmK_R$ as $R$--modules. As is known by \cite[Satz 6.21]{HK}, $R$ possesses a canonical ideal if and only if $\rmQ (\widehat{R})$ is a Gorenstein ring, where $\widehat{R}$ denotes the $\m$--adic completion of $R$. Therefore, the ring  $R$ possesses a canonical ideal, once it is analytically unramified, that is the case where $\widehat{R}$ is a reduced ring.

Let $I$ be a canonical ideal of $R$. Then because $\operatorname{Ann}_R I = (0)$, the ideal $I$ is  $\m$--primary, and we have integers $\rme_0(I) > 0$ and $\rme_1(I)$ such that the Hilbert function of $I$ is given by the polynomial 
$$\ell_R(R/I^{n+1}) = \rme_0(I)\binom{n+1}{1} - \rme_1(I)$$ 
for all integers $n \gg 0$, where $\ell_R(M)$ denotes, for each $R$--module $M$,  the length of $M$. Let $\mathrm{r}(R) = \ell_R(\operatorname{Ext}_R^1(R/\m, R))$ be the Cohen-Macaulay type of $R$ (\cite[Definition 1.20]{HK}). Then our definition of almost Gorenstein ring is now stated as follows.

\begin{defn*} [Definition \ref{almostGor}]
We say that $R$ is an almost Gorenstein ring, if $R$ possesses a canonical ideal $I$ such that $\rme_1(I) \le \rmr(R)$.
\end{defn*}
\noindent
If $R$ is a Gorenstein ring, then we can choose any parameter ideal $Q$ of $R$ to be  a canonical ideal and  get $\rme_1(Q) = 0 < \rmr (R) = 1$. Hence every Gorenstein local ring of dimension one is an almost Gorenstein ring.

Let us explain how this paper  is organized, describing the main contents in it. In Section 2 we would like to invite the reader to revisit some well-known results, say Northcott-Rees' inequalities, on the first Hilbert coefficients $\rme_1(I)$ of $\m$--primary ideals $I$ in $R$. These results, especially those  for canonical ideals of $R$, have led us to the present research and  control the whole story of this paper. We shall discuss in Section 2  also the condition under which $R$ possesses {\it fractional} ideals $K$ such that $R \subseteq K \subseteq \overline{R}$ and $K \cong \rmK_R$ as $R$--modules.

In Section 3 we shall give  characterizations of Gorenstein rings and almost Gorenstein rings as well, according to Definition \ref{almostGor}. Our definition \ref{almostGor} of almost Gorenstein ring is rather different from the one which Barucci and Fr{\"o}berg gave in the analytically unramified case \cite[Definition-Proposition 20]{BF}. (Here notice that in Definition \ref{almostGor} $R$ is not assumed to be analytically unramified.) However, despite the difference in appearance, both the definitions are equivalent to each other in the analytically unramified case, which we will confirm in Section 3.  

In Section 4 we will explore $3$--generated numerical semigroup rings over a field and their almost Gorenstein property.  Corollary \ref{NHW} has been reported by H. Nari \cite{NNW} (see \cite{NNW2} also) at the 32-nd Symposium on Commutative Algebra in Japan (Hayama, 2010). Our research is independent of \cite{NNW, NNW2}.

In Section 5 we will study the problem of when the endomorphism algebra $\m : \m ~(\cong \Hom_R(\m,\m))$ of $\m$ is a Gorenstein ring. This is the problem which Barucci and Fr{\"o}berg wanted to solve in \cite{BF}, but Barucci finally felt there was a gap in \cite[Proof of Proposition 25]{BF}. However, we should note here that the counterexample \cite[Example p. 995]{B} given by Barucci to \cite[Proposition 25]{BF} is wrong, and the proof stated in \cite{BF} still works with our modified definition of almost Gorenstein ring, which we shall closely discuss in Section 5.

In the last section 6 we will give a series of characterizations of almost Gorenstein rings obtained by idealization (namely, trivial extension), including the following theorem.

\begin{thm*} [Theorem \ref{algorm}] Let $R \ltimes \m$ denote the idealization of $\m$ over $R$. Then the following conditions are equivalent.
\begin{enumerate}[{\rm (1)}]
\item $R\ltimes {\m}$ is an almost Gorenstein ring.
\item $R$ is an almost Gorenstein ring.
\end{enumerate}
\end{thm*}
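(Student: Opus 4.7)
The strategy is to reduce the theorem to a single ring-theoretic condition shared by $R$ and $S := R\ltimes\m$, by comparing canonical fractional ideals on both sides and invoking the characterization of almost Gorensteinness proved in Section 3.

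First I would fix, using Section 2, a canonical fractional ideal $K$ of $R$ with $R \subseteq K \subseteq \overline R$, and construct an explicit canonical fractional ideal for $S$. Using
\[
\rmK_S \;=\; \Hom_R(S,\rmK_R) \;\cong\; \rmK_R \oplus \Hom_R(\m,\rmK_R)
\]
as $R$-modules, equipped with the $S$-action $(b,y)\cdot(\alpha,\varphi)=(b\alpha+\varphi(y),\,b\varphi)$, and identifying $\Hom_R(\m,K)$ with the colon $K :_{\rmQ(R)} \m$, one realizes $\rmK_S$ inside $\rmQ(S) \cong \rmQ(R)\ltimes \rmQ(R)$ as the fractional $S$-ideal
\[
\mathcal K \;:=\; (K:\m) \ltimes K, \qquad S \subseteq \mathcal K.
\]
By the Section 3 characterization, $R$ is almost Gorenstein iff $\m K \subseteq \m$, and analogously $S$ is almost Gorenstein iff $\mathfrak M\mathcal K \subseteq \mathfrak M$, where $\mathfrak M = \m\ltimes\m$ is the maximal ideal of $S$.

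Next I would carry out the key computation. Using $\m K \subseteq \m(K:\m)$ (which follows from $K\subseteq K:\m$), a direct calculation in $\rmQ(S)$ gives
\[
\mathfrak M\mathcal K \;=\; \m(K:\m) \ltimes \bigl(\m K + \m(K:\m)\bigr) \;=\; \m(K:\m) \ltimes \m(K:\m),
\]
so the condition $\mathfrak M\mathcal K \subseteq \mathfrak M$ is equivalent to $\m(K:\m)\subseteq \m$. Thus the theorem reduces to the equivalence
\[
\m K \subseteq \m \;\iff\; \m(K:\m) \subseteq \m,
\]
of which $(\Leftarrow)$ is immediate from $K\subseteq K:\m$.

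The converse $(\Rightarrow)$ is the heart of the proof. I would exploit the short exact sequence
\[
0 \to K \to K:\m \to R/\m \to 0
\]
coming from $\Ext^1_R(R/\m,\rmK_R) \cong R/\m$ (a consequence of $\depth \rmK_R = 1$). Writing $K:\m = K + Ru$ for a socle lift $u \notin K$ satisfying $\m u \subseteq K$, one has $\m(K:\m) = \m K + \m u = \m + \m u$ under the hypothesis $\m K = \m$, and the task becomes showing $\m u \subseteq \m$. An intermediate step yields $(\m u)\m = u\m^2 \subseteq \m K = \m$, placing $\m u$ inside the ring $\m :_R \m$; the final inclusion $\m u \subseteq \m$ then follows from the rigidity enforced by $u$ being the (unique up to scalar) socle generator of $(K:\m)/K$, forcing the image of $\m u$ in the $R/\m$-vector space $K/\m$ to be zero.

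The main obstacle is precisely this last socle argument: a priori one only has $\m u \subseteq K$, and promoting this to $\m u \subseteq \m$ requires pinning down the socle structure of $\rmK_R$. I would expect the Gorenstein case $K=R$ to be handled separately, where $K:\m = R:\m$ and one either verifies $\m(R:\m) = \m$ directly, or observes that $S=R\ltimes\m$ is itself Gorenstein (as happens for a DVR, where $\m \cong R$ and $S \cong R[\varepsilon]/(\varepsilon^2)$ is a hypersurface, hence Gorenstein, hence automatically almost Gorenstein).
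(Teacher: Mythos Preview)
Your overall strategy---realizing $\rmK_{R\ltimes\m}$ explicitly as the fractional ideal $\mathcal K=(K:\m)\ltimes K$ inside $\rmQ(R)\ltimes\rmQ(R)$ and reducing everything to the single equivalence $\m K\subseteq\m\iff\m(K:\m)\subseteq\m$---is sound and genuinely different from the paper's proof. The paper does not compute $\rmK_{R\ltimes\m}$ directly; instead it passes through the general criterion of Proposition~\ref{alid1} (when $R\ltimes I^\vee$ is almost Gorenstein for an arbitrary $\m$-primary ideal $I$) and Theorem~\ref{alid2}, together with the identification $\m\cong(I:_R\m)^\vee$. Your route avoids this machinery and is more direct. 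The computation $\mathfrak M\mathcal K=\m(K:\m)\ltimes\m(K:\m)$ and the easy direction $(\Leftarrow)$ are correct, and the DVR case is correctly isolated (indeed $\mathcal K\not\subseteq\overline{S}$ there, so Setting~\ref{setting} fails and one must argue separately).

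The gap is exactly where you flag it: the ``socle rigidity'' argument for $(\Rightarrow)$ is not a proof. Knowing that $u$ spans the one-dimensional quotient $(K:\m)/K$ gives no control over where $\m u$ lands inside the vector space $K/\m$; there is no mechanism forcing that image to vanish. What you actually need is the equality $K:\m=\m:\m$ (for $R$ almost Gorenstein and not a DVR), which immediately yields $u\m\subseteq\m$. This is the content of Theorem~\ref{algorcor}~(4)$\Leftrightarrow$(6), but it also drops out of your own hypotheses in one line: since $K:K=R$ (\cite[Bemerkung 2.5]{HK}) and $\m K=\m$, one has
\[
K:\m \;=\; K:\m K \;=\; (K:K):\m \;=\; R:\m \;=\; \m:\m,
\]
the last equality by Lemma~\ref{DVR}~(3). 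With this identity your argument closes; without it the proof is incomplete. Note also that the paper's own proof ultimately rests on the same fact (it invokes $S=K:\m\cong\m^\vee$ from Theorem~\ref{algorcor}), so the two approaches share the same core input even though the packaging differs.
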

\noindent 
This result enables us to construct infinitely many examples of analytically ramified almost Gorenstein rings that are not Gorenstein, which shows our modified definition \ref{almostGor} enriches concrete examples of almost Gorenstein rings as well as the theory.

Unless otherwise specified, in what follows, let $(R, \m)$ denote a Cohen-Macaulay local ring with $\dim R = 1$. Let $\rmQ (R)$ be the total quotient ring of $R$ and $\overline{R}$ the integral closure of $R$ in $\rmQ (R)$. For each finitely generated $R$--module $M$, let $\mu_R(M)$ denote the number of elements in a minimal system of generators for $M$. Let $v(R) = \mu_R(\m)$ and $e(R) = \rme_0({\m})$, the multiplicity of $R$ with respect to $\m$. Let $\ell_R(*)$ stand for the length.
For given fractional ideals $F_1$, $F_2$ of $R$, let $F_1:F_2=\{x\in \rmQ(R)\mid xF_2\subseteq F_1\}$. When we consider the ideal colon $\{x \in R \mid xJ \subseteq I\}$ for integral ideals $I$, $J$ of $R$, we denote it by $I:_RJ$ in order to make sure of the meaning.


\section{The first Hilbert coefficients and existence of canonical ideals}
In this section we shall summarize preliminary results, which we need throughout this paper. Some of them are known but let us note  brief proofs for the sake of completeness.

Let $R$ be a Cohen-Macaulay local ring with maximal ideal $\m$ and $\operatorname{dim} R = 1$. Let $I$ be an $\m$--primary ideal of $R$. Then there exist integers $\rme_0(I) >0$ and $\rme_1(I)$ such that $$\ell_R(R/I^{n+1}) = \rme_0(I)\binom{n+1}{1} - \rme_1(I)$$ for all integers $n \gg 0$. We assume that there exists an element $a \in I$ such that the ideal $Q=(a)$ is a reduction of $I$, i.e., $I^{r+1} = QI^r$ for some integer $r \ge 0$ (this condition is automatically satisfied, if the residue class field $R/\m$ of $R$  is infinite). We put  
$$r = \mathrm{red}_Q(I) :=  \min\{n \in \mathbb Z \mid I^{n+1}=QI^n\}.$$ 
For each integer $n\geq 0$ let $\frac{I^n}{a^n} = \{ \frac{x}{a^n} \mid x \in I^n \}$ and put $S = R[\frac{I}{a}]$ in $\rmQ (R)$. We then have $\frac{I^n}{a^n} \subseteq \frac{I^{n+1}}{a^{n+1}}$ for all $n \ge 0$. Therefore, since $S = \bigcup_{n \ge 0}\frac{I^n}{a^n}$ and $\frac{I^n}{a^n} = \frac{I^r}{a^r}$ for all $n \ge r$, we get $S = \frac{I^r}{a^r} \cong I^r$ as $R$--modules. Hence $S$ is a finitely generated $R$--module, so that  
$$R \subseteq S \subseteq \overline{R}.$$

Let $n \ge 0$ be an integer. Then, since $I^{n+1}/Q^{n+1} \cong [\frac{I^n}{a^n}]/R \subseteq S/R$, we have
\begin{eqnarray*}
\ell_R(R/I^{n+1}) &=& \ell_R(R/Q^{n+1}) - \ell_R(I^{n+1}/Q^{n+1})\\
& \ge & \ell_R(R/Q^{n+1}) - \ell_R(S/R)\\
&=& \ell_R(R/Q)\binom{n+1}{1} - \ell_R(S/R)
\end{eqnarray*}
and
$$\ell_R(R/I^{n+1}) = \ell_R(R/Q)\binom{n+1}{1} - \ell_R(S/R),$$
 if $n \ge r - 1$.
Consequently we get the following.

\begin{lem}\label{calce1}
$\rme_0(I) = \ell_R(R/Q)$ and $$0 \le \rme_1(I)=\ell_R(I^r/Q^r)=\ell_R(S/R) \le \ell_R(\overline{R}/R).$$
\end{lem}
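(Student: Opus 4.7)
The plan is to harvest the Hilbert-function computation that was already carried out in the paragraph preceding the lemma. The displayed equality
$$\ell_R(R/I^{n+1}) = \ell_R(R/Q)\binom{n+1}{1} - \ell_R(S/R)\quad (n \ge r-1)$$
is exactly the Hilbert polynomial of $I$ written in binomial form. I would note that two polynomials in $n$ that agree for all $n \gg 0$ are identical, so comparing this expression with $\rme_0(I)\binom{n+1}{1} - \rme_1(I)$ immediately forces
$$\rme_0(I) = \ell_R(R/Q) \quad \text{and} \quad \rme_1(I) = \ell_R(S/R).$$

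For the remaining equality $\rme_1(I) = \ell_R(I^r/Q^r)$, I would appeal to the isomorphism $I^{n+1}/Q^{n+1} \cong \bigl[\tfrac{I^n}{a^n}\bigr]/R$ established just above the lemma: taking $n = r-1$ (or any $n \ge r - 1$) and using $\tfrac{I^n}{a^n} = \tfrac{I^r}{a^r} = S$ for $n \ge r$, one gets $I^r/Q^r \cong S/R$ as $R$-modules, whence the lengths agree.

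Finally, the double inequality $0 \le \rme_1(I) \le \ell_R(\overline{R}/R)$ is immediate from the chain $R \subseteq S \subseteq \overline{R}$ established above: $S/R$ is a submodule of $\overline{R}/R$, so $\ell_R(S/R) \le \ell_R(\overline{R}/R)$ (both sides are finite since $\overline{R}$ is a finitely generated $R$-module in the one-dimensional Cohen-Macaulay case), and non-negativity is automatic.

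There is no real obstacle; the entire statement is essentially a book-keeping consequence of the setup already given. The only point requiring a brief word of care is the observation that $\overline{R}/R$ has finite length (so the upper bound is meaningful), which follows because $\rmQ(R)$ is Artinian and $\overline{R}$ is module-finite over $R$, but this is standard and can be dispatched in one line.
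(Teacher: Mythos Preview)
Your argument is exactly the one the paper has in mind: the lemma is stated with ``Consequently we get the following'' and is meant to be read off from the displayed computation of $\ell_R(R/I^{n+1})$ that precedes it, together with the inclusion $R \subseteq S \subseteq \overline{R}$. Your extraction of $\rme_0(I)$, $\rme_1(I)$, and the identification $I^r/Q^r \cong S/R$ is correct and matches the paper.

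One inaccuracy, however: your final parenthetical claim that $\overline{R}$ is module-finite over $R$ ``in the one-dimensional Cohen--Macaulay case'' is false. For a one-dimensional Cohen--Macaulay local ring, module-finiteness of $\overline{R}$ is equivalent to $R$ being analytically unramified, and the whole point of this paper is to work \emph{without} that hypothesis (see the introduction and, e.g., the ring $R_3$ in Example~\ref{ex}). Thus $\ell_R(\overline{R}/R)$ may well be infinite here. Fortunately this does not damage the inequality: $\ell_R(S/R) \le \ell_R(\overline{R}/R)$ holds trivially from $S \subseteq \overline{R}$ whether or not the right-hand side is finite, and the left-hand side is finite because $a^r S = I^r \subseteq R$ forces $S/R$ to be a finitely generated module over the Artinian ring $R/(a^r)$. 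So simply drop the claim about $\overline{R}$ being module-finite and the proof stands.
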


The following result is fairly well-known. We however note a brief proof, because it controls the whole story of this paper.

\begin{prop}[cf. \cite{N}]\label{ineq} $r \le \rme_1(I)$ and $$\mu_R(I/Q) = \mu_R(I) - 1 \le \ell_R(I/Q)  = \rme_0(I) - \ell_R(R/I) \le \rme_1(I).$$ We furthermore have the following.
\begin{enumerate}[{\rm (1)}]
\item $\mu_R(I/Q) = \ell_R(I/Q)$ if and only if $\m I \subseteq Q$, i.e., $\m I = \m Q$.
\item $\ell_R(I/Q) = \rme_1(I)$ if and only if $I^2=QI$.
\end{enumerate}
\end{prop}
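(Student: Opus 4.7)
The plan is to exploit Lemma~\ref{calce1}, which already gives $\rme_0(I)=\ell_R(R/Q)$ and $\rme_1(I)=\ell_R(I^r/Q^r)=\ell_R(S/R)$, together with the displayed inequality
$\ell_R(R/I^{n+1})\ge \ell_R(R/Q)(n+1)-\ell_R(S/R)$
derived just above that lemma. The basic chain of (in)equalities in the statement will follow by reading this off at small $n$, and the sharpening to (1) and (2) will come from one and the same ``multiplication by $a$'' trick applied inside $\rmQ(R)$.

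For $r\le\rme_1(I)$, I would produce the chain
$$Q^r\ \subseteq\ Q^{r-1}I\ \subseteq\ Q^{r-2}I^2\ \subseteq\ \cdots\ \subseteq\ QI^{r-1}\ \subseteq\ I^r$$
and show each containment $Q^{r-s}I^s\subseteq Q^{r-s-1}I^{s+1}$ is strict for $0\le s<r$: equality would give $aI^s=I^{s+1}$ after cancelling the non-zerodivisor $a^{r-s-1}$, i.e.\ $QI^s=I^{s+1}$, contradicting the minimality of $r$. Since $\rme_1(I)=\ell_R(I^r/Q^r)$, the $r$ strict steps yield $\rme_1(I)\ge r$. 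For $\mu_R(I/Q)=\mu_R(I)-1$, I would first note $a\notin\m I$: otherwise $I^{r+1}=QI^r\subseteq \m I^{r+1}$, forcing $I^{r+1}=0$ by Nakayama, impossible since $I$ is $\m$-primary; hence $a$ extends to a minimal generating set of $I$. The trivial bound $\mu_R(M)\le \ell_R(M)$ gives $\mu_R(I/Q)\le \ell_R(I/Q)$, while additivity on $0\to I/Q\to R/Q\to R/I\to 0$ gives $\ell_R(I/Q)=\rme_0(I)-\ell_R(R/I)$. Finally, plugging $n=0$ into the displayed inequality above Lemma~\ref{calce1} yields $\ell_R(R/I)\ge \rme_0(I)-\rme_1(I)$, which rearranges to $\ell_R(I/Q)\le \rme_1(I)$.

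For (1), since $I/Q$ has finite length over the local ring $R$, $\mu_R(I/Q)=\ell_R(I/Q)$ is equivalent to $\m(I/Q)=0$, i.e.\ $\m I\subseteq Q$. To upgrade this to $\m I=\m Q$, write any $x\in \m I$ as $x=ay$; if $y\notin\m$ then $y$ would be a unit and $a\in\m I$, contradicting the observation that $a\notin\m I$, so $y\in\m$ and $\m I\subseteq a\m=\m Q$ (the reverse inclusion being automatic). For (2), multiplication by the non-zerodivisor $a^{r-1}$ gives an $R$-linear isomorphism $I/Q\xrightarrow{\ \sim\ }Q^{r-1}I/Q^r\subseteq I^r/Q^r$, so $\ell_R(I/Q)\le \ell_R(I^r/Q^r)=\rme_1(I)$, with equality iff $Q^{r-1}I=I^r$. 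Using $I^r=aI^{r-1}$ and cancelling $a$ repeatedly, $Q^{r-1}I=I^r$ forces $QI=I^2$, i.e.\ $r\le 1$; conversely if $I^2=QI$ then $r\le 1$ and the equality is read off Lemma~\ref{calce1}.

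The main obstacle I anticipate is bookkeeping the strictness of the various containments $Q^{r-s}I^s\subsetneq Q^{r-s-1}I^{s+1}$: every such step must be reduced, via cancellation of a power of the non-zerodivisor $a$, to the single statement $QI^s\neq I^{s+1}$ for $s<r$, which is exactly the minimality of the reduction number. Once this cancellation argument is in hand, all five inequalities and both equivalences fall out uniformly.
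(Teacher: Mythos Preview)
Your argument is largely sound and close in spirit to the paper's, but there is one slip in part~(2). You write ``Using $I^r=aI^{r-1}$''; this identity is precisely what \emph{fails} for $r\ge 1$, by the very definition of $r=\mathrm{red}_Q(I)$. The fix is already in your first paragraph: the strict chain $Q^r\subsetneq Q^{r-1}I\subsetneq\cdots\subsetneq I^r$ shows in particular that $Q^{r-1}I\subsetneq I^r$ whenever $r\ge 2$, so the equality $Q^{r-1}I=I^r$ forces $r\le 1$ directly. (Alternatively, from $a^{r-1}I=I^r\supseteq a^{r-2}I^2$ cancel $a^{r-2}$ to obtain $aI\supseteq I^2$.)

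Apart from this, your route and the paper's coincide in substance but differ in packaging. The paper works with the ascending chain of embeddings
\[
(0)\hookrightarrow I/Q\xrightarrow{\,a\,} I^2/Q^2\xrightarrow{\,a\,}\cdots\xrightarrow{\,a\,} I^r/Q^r,
\]
each strict because its cokernel is $I^{n+1}/QI^n\ne 0$ for $n<r$; you instead refine $I^r/Q^r$ by $Q^r\subseteq Q^{r-1}I\subseteq\cdots\subseteq I^r$, whose successive quotients are, after cancelling powers of $a$, the same modules $I^{s+1}/QI^s$. Either filtration yields $r\le\rme_1(I)$ and $\ell_R(I/Q)\le\rme_1(I)$ with equality iff $r\le 1$. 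Your derivation of $\ell_R(I/Q)\le\rme_1(I)$ from the pre-Lemma~\ref{calce1} inequality at $n=0$ is a pleasant shortcut the paper does not take, and your explicit verifications that $a\notin\m I$ and that $\m I\subseteq Q$ implies $\m I=\m Q$ fill in details the paper leaves to the reader.
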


\begin{proof} We may assume $r > 0$, that is  $I \ne Q$. Look at  the embedding 
$$(0) \hookrightarrow I/Q\overset{a}{\hookrightarrow} I^2/Q^2\overset{a}{\hookrightarrow} \cdots \overset{a}{\hookrightarrow} I^{r-1}/Q^{r-1} \overset{a}{\hookrightarrow} I^r/Q^r \overset{\sim}{\rightarrow} I^{r+1}/Q^{r+1} \overset{\sim}{\rightarrow} \ldots$$
and we get $$r \le \ell_R(I^r/Q^r),\ \ \ell_R(I/Q) \le \ell_R(I^r/Q^r)=\ell_R(S/R)={\e}_1(I).$$
Hence $r\leq {\e}_1(I)$ and $\rme_0(I) - \ell_R(R/I) = \ell_R(R/Q) - \ell_R(R/I)=\ell_R(I/Q) \leq {\e}_1(I)$. We have the equality $\ell_R(I/Q) = \rme_1(I)$ if and only if $r=1$, i.e., $I^2 = QI$. We clearly have $\mu_R(I/Q) \le \ell_R(I/Q)$, and  $\mu_R(I/Q) = \ell_R(I/Q)$ if and only if $\m I \subseteq Q$. The latter condition is equivalent to saying that $\m I = \m Q$, since $Q$ is a minimal reduction of $I$. 
\end{proof}

\begin{rem}
Proposition  \ref{ineq} is a special case of the results which hold true for arbitrary Cohen-Macaulay local rings of positive dimension. The inequality $\ell_R(I/Q) \le \rme_1(I)$ is known as Northcott's inequality (\cite{N}), and assertion (2) of Proposition \ref{ineq} was proven by \cite{H, O} independently. The ideals $I$ satisfying the condition that $\m I \subseteq Q$ are called ideals of minimal multiplicity (\cite{G}). 
\end{rem}

The estimations given by Proposition \ref{ineq} are sharp, as we see in the following examples.

\begin{ex}\label{1.5}
 Let $k[[t]]$ be the formal power series ring over a field $k$. 
\begin{enumerate}
\item Let $R = k[[t^3, t^5, t^7]]$, $I = (t^3, t^5)$, and $Q = (t^3)$. Then $Q$ is a reduction of $I$ with $\mathrm{red}_Q(I) = 2$. Hence $S = R[\frac{I}{t^3}] = k[[t^2, t^3]]$. We have $\m I \subseteq Q$ and $\rme_0(I) = \ell_R(R/Q) = 3$, so that $$\mu_R(I/Q) = \ell_R(I/Q) = 1 < \rme_1(I) = 2,$$
because $\rme_1(I) = \ell_R(S/R) = \ell_R(k[[t^2,t^3]]/R) =2$. Therefore $$\ell_R(R/I^{n+1}) = 3\binom{n+1}{1} - 2$$ for all $n \ge 1$.

\item Let $R = k[[t^3, t^5]]$, $I = (t^5,t^9)$, and $Q = (t^5)$. Then $\mathrm{red}_Q(I) = 1$ but $\m I \not\subseteq Q$. We have $\rme_0(I) = 5$ and $$\mu_R(I/Q) = 1 < \ell_R(I/Q) = \rme_1(I)  = 2.$$ Hence $\ell_R(R/I^{n+1}) = 5\binom{n+1}{1} -2$ for all $n \ge 0$. 

\item Let $R = k[[t^3, t^7, t^8]]$, $I=(t^6, t^7)$, and $Q = (t^6)$. Then $\mathrm{red}_Q(I) = 2$. We have $\rme_0(I) = 6$, $\rme_1(I) = \ell_R(k[[t]]/R) = 4$, and $$\mu_R(I/Q) = 1 < \ell_R(I/Q) = 2 < \rme_1(I).$$ Hence $\ell_R(R/I^{n+1}) = 6\binom{n+1}{1} - 4$ for all $n \ge 1$. As for the maximal ideal $\m$ of $R = k[[t^3, t^7, t^8]]$, we have $\mathrm{red}_\fkq(\m) = 1$ where $\fkq = (t^3)$.  Hence $\rme_0(\m) = 3$, while we have $\rme_1(\m) = 2$ as $R[\frac{\m}{t^3}] = k[[t^3,t^4,t^5]]$. Therefore $$\mu_R(\m/\fkq) = \ell_R(\m/\fkq) = \rme_1(\m)$$ and $\ell_R(R/\m^{n+1}) = 3\binom{n+1}{1} - 2$ for all $n \ge 0$. 
\end{enumerate}
\end{ex}

We note a few consequences. Let $\overline \fka$ denote, for each ideal $\fka$ of $R$, the integral closure of $\fka$.

\begin{cor}\label{reme1} The following assertions hold true.
\begin{enumerate}[$(1)$]
\item Let $I$ and $J$ be $\m$--primary ideals of $R$ and suppose that $I$ contains a reduction $Q = (a)$. If $I\subseteq J\subseteq \overline I$, then ${\e}_1(I)\leq {\e}_1(J)$.
\item Suppose that $R$ is not a discrete valuation ring. Then $\e_1(Q:_R{\m}) = \rmr(R)$ for every parameter ideal $Q=(a)$ of $R$, where $\rmr (R)$ denotes the Cohen--Macaulay type of $R$. 
\end{enumerate}
\end{cor}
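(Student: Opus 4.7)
For assertion (1), the plan is to apply Lemma~\ref{calce1} twice. Since $Q \subseteq I \subseteq J \subseteq \overline I$, taking integral closures forces $\overline Q = \overline I = \overline J$, so the element $a$ also generates a reduction of $J$. Setting $S_I = R[I/a]$ and $S_J = R[J/a]$, the inclusion $I \subseteq J$ gives $S_I \subseteq S_J$ inside $\overline R$, and Lemma~\ref{calce1} yields
\[
\rme_1(I) = \ell_R(S_I/R) \le \ell_R(S_J/R) = \rme_1(J).
\]

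For assertion (2), set $I = Q :_R \m$. The plan is to verify the chain $\rmr(R) = \ell_R(I/Q) = \rme_1(I)$. First, since $R$ is not a DVR, $\m$ is not principal, so $\m \ne Q$; hence $1 \notin I$ and $I \subseteq \m$. The key step is to prove the identity $\m I = a\m$: writing $\m I = aK$ with $\m \subseteq K \subseteq R$ (valid since $a\m \subseteq \m I \subseteq (a)$), the alternative $K = R$ would give $\m \cdot (a^{-1}I) = R$, making $\m$ invertible and hence principal in the local ring $R$, contrary to hypothesis. With $\m I = a\m$ in hand, the identification $I/Q = \Soc(R/Q)$ of socles in the Artinian Cohen--Macaulay quotient $R/Q$ immediately gives $\ell_R(I/Q) = \rmr(R/Q) = \rmr(R)$.

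For the remaining equality $\rme_1(I) = \ell_R(I/Q)$, Proposition~\ref{ineq}(2) reduces the task to establishing $I^2 = QI$. Since $I \subseteq \m$, we have $I^2 \subseteq \m I = a\m \subseteq (a)$, so $I^2 = aJ$ for some ideal $J \subseteq R$. Then
\[
a \cdot \m J \;=\; \m I^2 \;=\; (\m I)\,I \;=\; (a\m)\,I \;=\; a\,(\m I) \;=\; a \cdot a\m \;=\; a^2 \m,
\]
and canceling the nonzerodivisor $a$ yields $\m J = a\m \subseteq (a)$, so $J \subseteq (a) :_R \m = I$. Therefore $I^2 = aJ \subseteq aI = QI$, and the reverse inclusion $QI \subseteq I\cdot I = I^2$ is automatic; hence $Q$ is a reduction of $I$ with reduction number at most one, and Proposition~\ref{ineq} delivers the desired equality.

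The main obstacle is the identity $\m I = a\m$, which underpins both the length computation and the cancellation step. The naive inclusion $\m I \subseteq (a)$ alone only gives $\m I = aK$ for \emph{some} ideal $K$ with $\m \subseteq K \subseteq R$, and it is precisely the non-DVR hypothesis, via non-invertibility of $\m$, that eliminates the degenerate alternative $K = R$. Without this refinement, the key chain $\m I^2 = (\m I) I = a^2 \m$ collapses only to $\m I^2 \subseteq a\m$, which fails to force $J \subseteq I$.
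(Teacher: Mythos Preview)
Your argument for assertion (1) is correct and identical to the paper's.

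For assertion (2) your proof is correct but takes a different route from the paper. The paper simply invokes the result of Corso--Polini \cite{CP} to obtain $I^2 = QI$ for $I = Q:_R\m$ when $R$ is not regular, and then reads off $\rme_1(I) = \ell_R(I/Q) = \rmr(R)$ from Lemma~\ref{calce1} and the socle computation. You instead give a self-contained proof of $I^2 = QI$ by first establishing the identity $\m I = a\m$ via non-invertibility of $\m$, and then bootstrapping: $I^2 = aJ$ with $\m J = a\m$ forces $J \subseteq Q:_R\m = I$. This is a genuinely elementary argument that avoids the external citation; what the paper's approach buys is brevity, while yours is self-contained and makes transparent exactly where the non-DVR hypothesis enters. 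One small remark: the socle identification $I/Q = \Soc(R/Q)$ and the equality $\ell_R(I/Q) = \rmr(R)$ do not actually require the identity $\m I = a\m$ (only $I \subseteq \m$, which you had already checked), so your phrasing ``With $\m I = a\m$ in hand'' slightly overstates the dependence there; the identity is needed only for the $I^2 = QI$ step.
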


\begin{proof}(1) Since $I \subseteq J \subseteq \overline{I}$, $Q$ is also a reduction of $J$ and $\frac{I}{a}\subseteq \frac{J}{a}$. Hence  by Lemma \ref{calce1} we get ${\e}_1(I)= \ell_R(R\left[\frac{I}{a}\right]/R) \leq \ell_R(R\left[\frac{J}{a}\right]/R)={\e}_1(J)$.

(2) We put  $I=Q:_R{\m}$. Then $I^2=QI$ by \cite{CP}, because $R$ is not  regular. Therefore  $R[\frac{I}{a}]=\frac{I}{a}$ and we get $\e_1(I)=\ell_R(R[\frac{I}{a}]/R)=\ell_R(I/Q)=\rmr(R).$ 
\end{proof}


Let $\rmK_R$ denote the canonical module of $R$. Remember that for the $\m$--adic completion $\widehat{R}$ of $R$, the canonical module $\rmK_{\widehat{R}}$ is defined by $$\rmK_{\widehat{R}} = \Hom_{\widehat{R}}(\rmH_{\widehat{\m}}^1(\widehat{R}), E),$$ where $\rmH_{\widehat{\m}}^1(\widehat{R})$ denotes the first local cohomology module of $\widehat{R}$ with respect to $\widehat{\m}$ and $E=\rmE_{\widehat{R}}(\widehat{R}/\widehat{\m})$ the injective envelope of the $\widehat{R}$--module $\widehat{R}/\widehat{\m}$. When $R$ is not necessarily $\m$--adically complete, the canonical module $\rmK_R$ is defined to be an $R$--module such that $$\widehat{R}\otimes_R \rmK_R \cong \rmK_{\widehat{R}}$$ as $\widehat{R}$--modules (\cite[Definition 5.6]{HK}). The canonical module $\rmK_R$ of $R$  is uniquely determined (up to isomorphisms) by this condition (\cite[Lemma 5.8]{HK}) and $R$ is a Gorenstein ring if and only if $\rmK_R \cong R$ as $R$--modules (\cite[Satz 5.9]{HK}).

The fundamental theory of canonical modules was developed by the monumental book \cite{HK} of E. Kunz and J. Herzog. In what follows, we shall freely consult \cite{HK} about basic results on canonical modules (see \cite[Part I]{BH} also).

As is well-known, $R$ possesses the canonical module $\mathrm{K}_R$ if and only if $R$ is a homomorphic image of a Gorenstein ring (\cite{R}). In the present research we are interested also  in the condition for $R$ to contain {\it canonical ideals}.

Let us begin with the following.

\begin{defn}\label{def}
An ideal $I$ of $R$ is said to be a canonical ideal of $R$, if $I \ne R$ and $I \cong \rmK_R$ as $R$--modules.
\end{defn}

Here we confirm that this definition implicitly assumes the existence of the canonical module $\rmK_R$. Namely, the condition in Definition \ref{def} that $I \cong \rmK_R$ as $R$--modules should be read to mean that $R$ possesses the canonical module $\rmK_R$ and the ideal $I$ of $R$ is isomorphic to $\rmK_R$ as an $R$--module. Notice that canonical ideals are $\m$--primary, because they are faithful $R$--modules (\cite[Bemerkung 2.5]{HK}).

We then have the following result \cite[Satz 6.21]{HK}. Because it plays an important role in our argument, let us include a brief proof for the sake of completeness.

\begin{prop}[\cite{HK}]\label{existI}
The following conditions are equivalent.
\begin{enumerate}
\item [$(1)$] $\rmQ (\widehat{R})$ is a Gorenstein ring.
\item [$(2)$] $R$ contains a canonical ideal.
\end{enumerate}
Hence $R$ contains a canonical ideal, if $\widehat{R}$ is a reduced ring.
\end{prop}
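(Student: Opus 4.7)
The plan is to prove the two implications separately.

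\textbf{For $(2) \Rightarrow (1)$:} Suppose $I$ is a canonical ideal of $R$. I would first note that $I$ is faithful (\cite[Bemerkung 2.5]{HK}), hence contains a non-zero-divisor and is $\m$--primary. The completion $\widehat{I} = I\widehat{R}$ is then an $\widehat{\m}$--primary ideal of $\widehat{R}$, and by flat base change for the canonical module $\widehat{I} \cong \widehat{R} \otimes_R \rmK_R \cong \rmK_{\widehat{R}}$. Localizing at the total quotient ring and applying flat base change once more gives
$$\rmK_{\rmQ(\widehat{R})} \cong \rmK_{\widehat{R}} \otimes_{\widehat{R}} \rmQ(\widehat{R}) \cong \widehat{I} \otimes_{\widehat{R}} \rmQ(\widehat{R}) \cong \rmQ(\widehat{R}),$$
where the last isomorphism holds because $\widehat{I}$ contains a non-zero-divisor of $\widehat{R}$. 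Thus $\rmQ(\widehat{R})$ is a zero-dimensional Cohen-Macaulay ring whose canonical module is free of rank one, hence Gorenstein.

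\textbf{For $(1) \Rightarrow (2)$:} Assume $\rmQ(\widehat{R})$ is Gorenstein. Since $\widehat{R}$ is a quotient of a complete regular local ring, $\rmK_{\widehat{R}}$ exists; being a maximal Cohen-Macaulay $\widehat{R}$-module, it is torsion-free. The hypothesis then yields an injection
$$\rmK_{\widehat{R}}\hookrightarrow \rmK_{\widehat{R}}\otimes_{\widehat{R}}\rmQ(\widehat{R}) \cong \rmK_{\rmQ(\widehat{R})} \cong \rmQ(\widehat{R}),$$
realizing $\rmK_{\widehat{R}}$ as a fractional ideal of $\widehat{R}$; clearing denominators by a non-zero-divisor of $\widehat{R}$ yields an $\widehat{\m}$--primary ideal $\widehat{J} \subseteq \widehat{R}$ with $\widehat{J} \cong \rmK_{\widehat{R}}$ (the ideal $\widehat{J}$ is $\widehat{\m}$--primary because it is a faithful ideal of a one-dimensional Cohen-Macaulay ring).

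The main obstacle is then descent: producing an ideal $J \subseteq R$ whose extension recovers $\widehat{J}$. My plan is to pick $N$ with $\widehat{\m}^N \subseteq \widehat{J}$, and to lift $\widehat{J}/\widehat{\m}^N \subseteq \widehat{R}/\widehat{\m}^N$ across the canonical isomorphism $\widehat{R}/\widehat{\m}^N \cong R/\m^N$ to an ideal $J \subseteq R$ containing $\m^N$. A direct check using $\m^N \subseteq J$ and $\widehat{\m}^N \subseteq \widehat{J}$ then shows $J\widehat{R} = \widehat{J}$. Consequently $J \otimes_R \widehat{R} \cong \widehat{J} \cong \rmK_{\widehat{R}}$, so by the defining property (and uniqueness, \cite[Lemma 5.8]{HK}) of the canonical module, $\rmK_R$ exists and is isomorphic to $J$; since $\widehat{J}\neq\widehat{R}$ forces $J\neq R$, this $J$ is the desired canonical ideal.
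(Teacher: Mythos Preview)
Your proof is correct and follows essentially the same route as the paper: both directions rely on flat base change for canonical modules and the faithfulness of $\rmK_R$, and the descent step in $(1)\Rightarrow(2)$ is the same bijection between $\m$--primary ideals of $R$ and $\widehat{\m}$--primary ideals of $\widehat{R}$ (the paper phrases it as $I = J \cap R$, you phrase it via the Artinian quotient $R/\m^N \cong \widehat{R}/\widehat{\m}^N$, but these are equivalent). One minor point: you should note that one may arrange $\widehat{J} \ne \widehat{R}$ by multiplying by a further non-zerodivisor in $\widehat{\m}$ if necessary, as the paper does.
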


\begin{proof}
(1) $\Rightarrow$ (2)
Since $\rmQ(\widehat{R})$ is a Gorenstein ring, for each ${\p}\in \Ass\widehat{R}$ we have $(\rmK_{\widehat{R}})_{\p}\cong \rmK_{\widehat{R}_{\p}}\cong \widehat{R}_{\p}$ as $\widehat{R}_{\p}$--modules (\cite[Korollar 6.2]{HK}). Hence the $\rmQ(\widehat{R})$--module $\rmQ(\widehat{R})\otimes_{\widehat{R}} \rmK_{\widehat{R}}$ is locally free of rank one, so that $$\rmQ(\widehat{R})\otimes_{\widehat{R}} \rmK_{\widehat{R}} \cong \rmQ(\widehat{R})$$ as $\rmQ(\widehat{R})$--modules, which shows that $\rmK_{\widehat{R}}$ is a fractional ideal of $\widehat{R}$, because $\rmK_{\widehat{R}}$ is a torsion-free $\widehat{R}$--module. We choose an ideal $J$ of $\widehat{R}$ so that $J\cong \rmK_{\widehat{R}}$ as $\widehat{R}$--modules. We may assume $J \ne \widehat{R}$. Let $I=J \cap R$. We then have $I\widehat{R} = J$, because $J$ is an $\widehat{\m}$--primary ideal of $\widehat{R}$, and hence $I\cong \rmK_R$ by definition, because $I\widehat{R} = J \cong \rmK_{\widehat{R}}$. Thus $I$ is a canonical ideal.

(2) $\Rightarrow$ (1)
Let $I$ be a canonical ideal of $R$. Hence $I\widehat{R} \cong \rmK_{\widehat{R}}$. Therefore, because  $I\widehat{R}$ is an $\widehat{\m}$--primary ideal of $\widehat{R}$, for every $\p\in \Ass{\widehat{R}}$ we get  $$\widehat{R}_\p = I\widehat{R}_\p \cong  (\rmK_{\widehat{R}})_{\p} \cong \rmK_{\widehat{R}_\p},$$ so that $\widehat{R}_\p$ is  a Gorenstein ring. Thus the ring $\rmQ(\widehat R)$ is Gorenstein.
\end{proof}

Let $\overline{R}$ denote the integral closure of $R$ in $\rmQ(R)$.

\begin{cor}\label{existK2}
The following conditions are equivalent.
\begin{enumerate}[{\rm (1)}]
\item There exists an $R$--submodule $K$ of $\rmQ (R)$ such that $R \subseteq K \subseteq \overline{R}$ and $K \cong \rmK_R$ as $R$--modules.
\item $R$ contains a canonical ideal $I$ and $a \in I$ such that $(a)$ is  a  reduction of $I$.
\end{enumerate}
When this is the case, every canonical ideal $I$ of $R$ contains an element which generates a reduction of $I$ and the first Hilbert coefficient $\rme_1(I)$ is independent of the choice of canonical ideals $I$. 
\end{cor}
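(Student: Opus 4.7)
The plan is to set up the correspondence $K \leftrightarrow I = aK$ with $K = a^{-1}I$ directly, and then derive the supplementary statement by transporting along this correspondence.

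For $(2) \Rightarrow (1)$: given a canonical ideal $I$ and $a \in I$ with $(a)$ a reduction, I would set $K := a^{-1}I \subseteq \rmQ(R)$. Multiplication by $a$ gives an $R$-module isomorphism $K \cong I \cong \rmK_R$, and since $a \in I$ the identity $1 = a/a$ lies in $K$, whence $R \subseteq K$. The reduction condition $I^{n+1} = aI^n$ for $n \gg 0$ rewrites as $K^{n+1} = K^n$, so the ring $R[K] = \bigcup_{n \ge 0} K^n$ stabilizes to $K^n$ for $n \gg 0$ and is a finitely generated $R$-module (being isomorphic to $I^n$); hence $K \subseteq R[K] \subseteq \overline{R}$.

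For $(1) \Rightarrow (2)$: given $K$ as in (1), since $K$ is a finitely generated $R$-submodule of $\rmQ(R)$ there exists a regular $a \in R$ with $aK \subseteq R$. Set $I := aK$; then $I \cong K \cong \rmK_R$ is a canonical ideal, and $a = a \cdot 1 \in I$. Because $K \subseteq \overline{R}$, every element of $K$ is integral over $R$, so $R[K]$ is module-finite over $R$. The ascending chain $K \subseteq K^2 \subseteq \cdots \subseteq R[K]$ (ascending since $1 \in K$) stabilizes in this Noetherian $R$-module, giving $K^{n+1} = K^n$ for $n \gg 0$, equivalently $I^{n+1} = a I^n$, so $(a)$ is a reduction of $I$.

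For the supplementary statements, let $I$ be any canonical ideal and fix some $K$ as in (1), which is available by the equivalence just proved. Any $R$-module isomorphism $\phi: K \to I$ extends, after tensoring with $\rmQ(R)$, to a $\rmQ(R)$-linear endomorphism of $\rmQ(R)$ (both $K \otimes_R \rmQ(R)$ and $I \otimes_R \rmQ(R)$ being canonically $\rmQ(R)$, since $K$ and $I$ contain regular elements), hence $\phi$ is multiplication by $b := \phi(1) \in I$; the argument of $(1) \Rightarrow (2)$ applied to $I = bK$ then shows that $(b)$ is a reduction of $I$. Finally, Lemma \ref{calce1} gives $\rme_1(I) = \ell_R(R[I/a]/R)$, and for any second canonical ideal $I' = cI$ (with $c \in \rmQ(R)$ coming from an $R$-isomorphism $I \to I'$), the element $ca$ generates a reduction of $I'$ and $I'/(ca) = I/a$, whence $R[I'/(ca)] = R[I/a]$ and $\rme_1(I') = \rme_1(I)$.

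The only slightly delicate point is the recognition that $R$-linear isomorphisms between fractional $R$-ideals in $\rmQ(R)$ are necessarily multiplication by a single element of $\rmQ(R)$; everything else is direct bookkeeping through the dictionary $I = aK$, $K^{n+1} = K^n \Longleftrightarrow I^{n+1} = aI^n$.
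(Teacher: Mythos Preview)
Your argument is correct and follows the same route as the paper: set up the dictionary $I = aK$, $K = a^{-1}I$, and use that an $R$-isomorphism between fractional ideals is multiplication by an element of $\rmQ(R)$. One small omission: in $(1)\Rightarrow(2)$ you should choose $a \in \m$ (so that $I = aK \ne R$, as Definition~\ref{def} requires); where the paper verifies reduction and integrality in one stroke via $I \subseteq \overline{(a)} = a\overline{R}\cap R$, your Noetherian stabilization of $K \subseteq K^2 \subseteq \cdots$ is an equally valid alternative.
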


\begin{proof} 
(1) $\Rightarrow$ (2)
Choose a regular element $a$ of $R$ so that $I = aK \subsetneq R$. Then $I$ is a canonical ideal of $R$ and  $(a) \subseteq I \subseteq a\overline{R}$. Hence $(a)$ is a reduction of $I$.

(2) $\Rightarrow$ (1)
Let $I$ be a canonical ideal of $R$. Let $a \in I$ and assume that $(a)$ is a reduction of $I$. Then, because $I$ is an ${\m}$--primary ideal, the element $a$ is regular, so that $I \subseteq \overline{(a)} = a\overline{R} \cap R$. Therefore $K = \frac{I}{a}$ is a required $R$--submodule of $\overline{R}$, which shows the implication (2) $\Rightarrow$ (1). Let $J$ be any other canonical ideal of $R$. Then, because $J \cong I$ as $R$--modules, we have a unit $\alpha$ of $\rmQ (R)$ such that $J = \alpha I$. Since $(a)$ is a reduction of $I$, the element $b = \alpha a \in J$  generates a reduction of $J$. Therefore, because $\frac{J}{b} = \frac{I}{a}$, we get $\rme_1(J) = \ell_R(R[\frac{J}{b}]/R) = \ell_R(R[\frac{I}{a}]/R) = \rme_1(I)$ by Lemma \ref{calce1}, which proves the last assertion. 
\end{proof}

As an immediate consequence, we get the following.

\begin{cor} \label{existK}
Assume that $\rmQ(\widehat{R})$ is a Gorenstein ring. If the residue class field $R/\m$ of $R$ is infinite, then there exists an $R$--submodule $K$ of $\rmQ (R)$ such that $R \subseteq K \subseteq \overline{R}$ and $K \cong \rmK_R$ as $R$--modules.
\end{cor}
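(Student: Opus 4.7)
The plan is to chain together the two results just proved, with the infinite residue field hypothesis serving only to guarantee the existence of a principal reduction. First, since $\rmQ(\widehat{R})$ is Gorenstein, Proposition \ref{existI} produces a canonical ideal $I$ of $R$. Because canonical ideals are $\m$--primary (they are faithful), the question reduces to finding an element $a \in I$ such that $(a)$ is a reduction of $I$; once we have this, Corollary \ref{existK2} immediately supplies the desired $R$--submodule $K = \frac{I}{a}$ of $\overline{R}$.

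Next, I would invoke the standard Northcott--Rees fact that in a Cohen--Macaulay local ring of dimension $d$ with infinite residue field, every $\m$--primary ideal admits a minimal reduction generated by $d$ elements (obtained from a generic $R/\m$--linear combination of a minimal system of generators). In our setting $d = 1$, so this yields a principal reduction $(a) \subseteq I$ with $a \in I$. At this point the hypotheses of Corollary \ref{existK2}(2) are satisfied by $I$ and $a$, so the implication (2) $\Rightarrow$ (1) of that corollary gives an $R$--submodule $K$ of $\rmQ(R)$ with $R \subseteq K \subseteq \overline{R}$ and $K \cong \rmK_R$.

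There is no real obstacle; the only subtle point is citing (or sketching) the existence of a principal reduction in the one--dimensional Cohen--Macaulay case with infinite residue field, which is classical and follows from prime avoidance applied to a minimal generating set of $I$ modulo $\m I$. Everything else is a direct application of the preceding two results.
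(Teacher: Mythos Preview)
Your proposal is correct and is exactly the argument the paper has in mind: the corollary is stated as an ``immediate consequence'' of Corollary~\ref{existK2}, and the infinite residue field hypothesis is used precisely to guarantee a principal reduction of the canonical ideal (the paper already notes this fact at the start of Section~2). There is nothing to add.
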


\begin{rem}\label{notexist}
Corollary \ref{existK} is not true in general, unless the field $R/\m$ is infinite. For example, we look at the local ring $$R=k[[X,Y,Z]]/(X,Y) \cap (Y,Z) \cap (Z,X),$$ where $k[[X,Y,Z]]$ is the formal power series ring over a field $k$. Then $R$ is reduced and $\dim R = 1$.  We put $I=(x+y, y+z)$, where $x$, $y$, and $z$ denote the images of $X$, $Y$, and $Z$ in $R$, respectively. Then $I$ is a canonical ideal of $R$. If $k = \Z / 2\Z$,  no element of $I$  generates  a reduction of $I$, so that no $R$--submodules $K$ of $\rmQ (R)$ such that $R \subseteq K \subseteq \overline{R}$ are isomorphic to $\rmK_R$.
\end{rem}

\begin{proof}
We put $f = x+y+z$. Then $f$ is regular in $R$. Since $I \ne \m = I + (f)$, we have $f \not\in I$. A standard computation shows $$\m^2 = (x^2, y^2, z^2) = f\m = I\m = fI+ (x^2) = fI + (y^2) = fI + (z^2).$$ Hence $x^2, y^2, z^2 \not\in fI$, since $\m^2 \ne fI$ (notice  that $\mu_R(\m^2) = 3$ but $\mu_R(I) = 2$). Besides, because $I$ is a reduction of $\m$, $I$ is $\m$--primary, so that $\operatorname{Ann}_R I = (0)$. Therefore $I$ is a Cohen--Macaulay faithful $R$--module. Hence, to see that $I$ is a canonical ideal of $R$, it suffices to check that $\ell_R((0):_{I/fI}\m) = 1$ (see \cite[Korollar 6.12 and its proof]{HK}).

Let $\varphi \in (fI:_R\m) \cap I$ and write $\varphi = a(x+y) + b(y+z)$ for some $a, b\in R$. Then $x \varphi = ax^2 \in fI$ and $y \varphi = (a+b)y^2 \in fI$. Hence $a, b \in \m$, because $x^2, y^2 \not\in fI$. Therefore
$$
fI \subsetneq (fI:_R\m) \cap I \subseteq I \m = fI + (x^2).
$$
Consequently, because $\m x^2 = (x^3) = x{\cdot}fI$, we get $(fI:_R\m) \cap I = I\m  = fI + (x^2)$ and hence $\ell_R((0):_{I/fI}\m) = 1$. Thus $I$ is a canonical ideal of $R$.  

Let $k = \Bbb Z /2\Bbb Z$. Assume that $a \in I$ and $(a)$ is a reduction of $I$. Then $(a)$ is a reduction of $\m$, because $I$ is a reduction of $\m$. We write $$a = c_1x + c_2 y+ c_3 z + g$$ for some $c_1,c_2, c_3 \in k$ and $g \in \m^2 = I\m$. Let  $h = c_1x + c_2y+c_3z$. Then $h = a- g \in I$.
We have $\mathrm{red}_{(a)}(\m) = 1$, since $\mathrm{red}_{(f)}(\m) = 1$ (see Proposition \ref{ineq} and notice that assertion (2) is free of the choice of reductions $Q$ of $I$). We then have $$\m^2 = a\m \subseteq (h, g) \m \subseteq h\m + \m^3.$$ Hence $\m^2 = h\m = (c_1x^2, c_2y^2, c_3z^2)$, thanks to Nakayama's lemma.
Thus $c_i=1$ for every $i=1,2,3$, because $\mu_R(\m^2) = 3$. Hence $f=h \in I$, which is impossible. Thus no element of $I$ generates a reduction of $I$. Therefore, if $k = \Bbb Z/2\Bbb Z$,  by Corollary \ref{existK2} the ring $R$ possesses no $R$--submodules $K$ of $\rmQ (R)$ such that $R \subseteq K \subseteq \overline{R}$ and $K \cong \rmK_R$ as $R$--modules.
\end{proof}

The $R$--submodules $K$ of $\rmQ (R)$ such that $R \subseteq K \subseteq \overline{R}$ and $K \cong \rmK_R$ as $R$--modules play a  very important role in our argument. 
The following result insures the existence of those {\it fractional} ideals $K$, after enlarging the residue class field $R/\m$ of $R$ until it will be infinite, or even algebraically closed.

\begin{lem}[{\cite[AC IX,  p. 41, Corollaire]{Bo}}] \label{flat}
Let $(R,\m)$ be a Noetherian local ring with $k = R/\m$. Then for each extension $k_1/k$ of fields, there exists a flat local homomorphism $(R, \m) \to (R_1, \m_1)$ of Noetherian local rings which satisfies  the following conditions. 
\begin{enumerate}[{\rm (a)}]
\item $\m_1 = \m R_1$.
\item $R_1 / \m_1 \cong k_1$ as $k$--algebras.
\end{enumerate}
\end{lem}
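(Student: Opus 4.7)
The plan is to construct $R_1$ by the classical \emph{gonflement} procedure of Bourbaki: handle simple transcendental and simple algebraic field extensions by explicit constructions, then patch them together by Zorn's lemma.

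For a purely transcendental extension $k_1 = k(t)$, I would set $R_1 = R[T]_{\m R[T]}$, the localization of the polynomial ring at the prime $\m R[T]$ (prime since $R[T]/\m R[T] \cong k[T]$ is a domain). This ring is Noetherian (Hilbert basis plus stability of Noetherianness under localization), flat over $R$ (free, then localized), has maximal ideal $\m R_1$, and residue field $k(T) \cong k_1$ via $T \mapsto t$. For a simple algebraic extension $k_1 = k(\alpha)$ with minimal polynomial $f(T) \in k[T]$, I would pick a monic lift $F(T) \in R[T]$ of $f$ and set $R_1 = R[T]/(F(T))$. Then $R_1$ is a free $R$-module of rank $\deg f$, hence flat and Noetherian, and $R_1/\m R_1 \cong k[T]/(f) \cong k_1$, which is a field, so $\m R_1$ is the unique maximal ideal.

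For the general case, I would apply Zorn's lemma to the set $\calP$ of pairs $(L, R_L)$ with $k \subseteq L \subseteq k_1$ an intermediate field and $R_L$ a Noetherian local flat $R$-algebra with $\m R_L$ its maximal ideal and $R_L/\m R_L \cong L$ as $k$-algebras, ordered by compatible $R$-algebra inclusions. Upper bounds for chains are produced by filtered colimits in the category of $R$-algebras. A maximal element $(L_\ast, R_\ast)$ must then satisfy $L_\ast = k_1$: otherwise, picking $\gamma \in k_1 \setminus L_\ast$ and applying the simple extension construction above to $R_\ast$ relative to $L_\ast(\gamma)/L_\ast$ (transcendental or algebraic as the case may be) would yield a strictly larger element of $\calP$, contradicting maximality.

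The main obstacle is preserving Noetherianness in the colimit step, since arbitrary filtered colimits of Noetherian rings need not be Noetherian. Bourbaki's argument handles this by organizing the induction via a fixed transcendence basis $B$ of $k_1/k$: one first constructs $R_0 = R[\{X_b\}_{b \in B}]_{\m R[\{X_b\}]}$ and checks directly that it is Noetherian (each ideal is generated by elements living in some finitely generated subring, reducing to the finite-variable case), and then performs the algebraic extension $k_1/k_0$ by transfinite induction on a well-ordering of $k_1 \setminus k_0$, the successor steps being free module extensions as in the simple algebraic case and the limit steps again requiring a direct Noetherianness check. Flatness and the identification of maximal ideal and residue field pass through these constructions routinely.
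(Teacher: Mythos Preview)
The paper provides no proof of this lemma; it is simply quoted as a black box from Bourbaki \cite[AC IX, p.~41, Corollaire]{Bo}. Your sketch is precisely the standard \emph{gonflement} construction from that reference, so in substance you are reproducing the cited proof rather than offering an alternative. The two building blocks (localized polynomial ring for a transcendental step, $R[T]/(F)$ for an algebraic step) and the transfinite assembly are exactly what Bourbaki does.

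Your self-diagnosis of the delicate point is accurate: the only nontrivial issue is Noetherianness at limit stages, and the naive Zorn argument you first wrote down does not by itself guarantee that the colimit of a chain in $\calP$ is Noetherian. You then correctly indicate the remedy, namely to first adjoin a full transcendence basis in one shot (where Noetherianness of $R[\{X_b\}]_{\m R[\{X_b\}]}$ can be checked directly) and then climb through the algebraic part, where each successor step is a finite free extension and limit stages are handled by the fact that any finitely generated ideal already lives in a finite sub-tower. With that organization the argument is complete; just be aware that the Zorn formulation as initially stated is not self-contained without this refinement.
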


We  apply Lemma \ref{flat} to our context.

\begin{prop}\label{f}
Let $R$ be a Cohen-Macaulay local ring with maximal ideal $\m$ and $\dim R = 1$. Let $k = R/\m$ and let $k_1/k$ be an extension of fields. Suppose that $\varphi : (R, \m) \to (R_1, \m_1)$ is a flat local homomorphism of Noetherian local rings such that 
\begin{enumerate}[{\rm (a)}]
\item $\m_1 = \m R_1$.
\item $R_1 / \m_1 \cong k_1$ as $k$--algebras.
\end{enumerate}
Then $R_1$ is a Cohen--Macaulay ring with $\operatorname{dim} R_1 = 1$. We furthermore have the following.
\begin{enumerate}[{\rm (1)}]
\item $\rmQ(\widehat{R_1})$ is a Gorenstein ring if and only if $\rmQ(\widehat{R})$ is a Gorenstein ring. When this is the case,  for every canonical ideal $I$ of $R$ the ideal  $IR_1$ of $R_1$ is a canonical ideal of $R_1$ and $\rme_1(IR_1) = \rme_1(I)$.
\item $\m_1 : \m_1$ is a Gorenstein ring if and only if $\m : \m$ is a Gorenstein ring.
\item Let $M, N$ be finitely generated $R$--modules. Then $M \cong N$ as $R$--modules if and only if $R_1 \otimes_R M \cong R_1 \otimes_R N$ as $R_1$--modules.
\end{enumerate}
\end{prop}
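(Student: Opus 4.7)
The plan is to handle the preliminary Cohen--Macaulay statement and then parts (1), (3), (2) in that order, since (2) uses (3). That $R_1$ is one-dimensional Cohen--Macaulay is the classical consequence of flatness of $\varphi$ with zero-dimensional closed fiber $R_1/\m_1 = k_1$. Throughout I will repeatedly exploit three standard facts about the flat local extension $R \to R_1$ (and its completion $\widehat R \to \widehat{R_1}$, which inherits the same properties): the canonical module base change $\rmK_{R_1} = R_1 \otimes_R \rmK_R$, valid since the closed fiber $k_1$ is Gorenstein; the flat base change formula for Hom, $\Hom_R(M,N) \otimes_R R_1 = \Hom_{R_1}(M \otimes_R R_1, N \otimes_R R_1)$, for finitely presented $M$; and the length formula $\ell_{R_1}(M \otimes_R R_1) = \ell_R(M)$ for finite-length $M$ (using $\ell_{R_1}(R_1/\m_1) = 1$).

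For (1), one direction is direct: if $I \subsetneq R$ is a canonical ideal then $IR_1 = R_1 \otimes_R I \cong R_1 \otimes_R \rmK_R = \rmK_{R_1}$ by flatness, and $IR_1 \subseteq \m_1 \neq R_1$, so $IR_1$ is an $\m_1$-primary canonical ideal of $R_1$. For the converse, decompose $\rmQ(\widehat{R_1}) = \prod_{\mathfrak P \in \Min \widehat{R_1}} (\widehat{R_1})_{\mathfrak P}$; Gorensteinness means each factor is Gorenstein. For any $\p \in \Min \widehat R$, faithful flatness of $\widehat R \to \widehat{R_1}$ together with going-down produces $\mathfrak P \in \Min \widehat{R_1}$ contracting to $\p$, and the induced $\widehat R_\p \to (\widehat{R_1})_{\mathfrak P}$ is faithfully flat local, so Gorensteinness descends to $\widehat R_\p$; hence $\rmQ(\widehat R) = \prod_{\p \in \Min \widehat R} \widehat R_\p$ is Gorenstein. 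The equality $\rme_1(IR_1) = \rme_1(I)$ follows from the length formula: $\ell_{R_1}(R_1/(IR_1)^{n+1}) = \ell_R(R/I^{n+1})$ for all $n \gg 0$, so the Hilbert polynomials coincide.

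For (3), the forward implication is trivial; for the reverse, the Hom base change realizes any isomorphism $\Psi : M \otimes_R R_1 \xrightarrow{\sim} N \otimes_R R_1$ as coming from an element of $\Hom_R(M,N) \otimes_R R_1$, and reducing modulo $\m_1$ yields an isomorphism $(M/\m M) \otimes_k k_1 \xrightarrow{\sim} (N/\m N) \otimes_k k_1$ of $k_1$-vector spaces, whence $\dim_k M/\m M = \dim_k N/\m N$. Choose any $k$-isomorphism $\bar\varphi : M/\m M \xrightarrow{\sim} N/\m N$ and lift it to $\varphi \in \Hom_R(M,N)$; Nakayama makes $\varphi$ surjective. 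Then $\Psi^{-1} \circ (\varphi \otimes_R R_1)$ is a surjective endomorphism of $M \otimes_R R_1$, hence an isomorphism by Vasconcelos' theorem, so $\varphi \otimes_R R_1$ is an isomorphism, and faithful flatness of $R \to R_1$ forces $\varphi$ itself to be one.

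For (2), set $S = \m : \m = \Hom_R(\m,\m)$ and $S_1 = \m_1 : \m_1$; the Hom base change identifies $S_1 = R_1 \otimes_R S$. Both are one-dimensional Cohen--Macaulay semi-local rings (finite over $R$, resp.\ $R_1$), and canonical module base change gives $\rmK_{S_1} = R_1 \otimes_R \rmK_S$. Then $S$ is Gorenstein iff $\rmK_S \cong S$ as $S$-modules, and by the $S$-module analogue of (3) (whose proof is identical after localizing at each maximal ideal of $S$ to reduce to the local case), this holds iff $R_1 \otimes_R \rmK_S \cong R_1 \otimes_R S$ as $S_1$-modules, iff $\rmK_{S_1} \cong S_1$, iff $S_1$ is Gorenstein. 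The main obstacle throughout is the Noether--Deuring-style descent used in (3); everything else is routine bookkeeping with standard flat base change.
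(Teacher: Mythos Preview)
Your treatment of the preliminary statement and of part (1) is correct and essentially coincides with the paper's argument. For part (3) the paper simply cites \cite[Proposition (2.5.8)]{EGA}; your explicit Noether--Deuring style argument is a welcome alternative, but it contains a gap: the sentence ``choose any $k$-isomorphism $\bar\varphi:M/\m M\to N/\m N$ and lift it to $\varphi\in\Hom_R(M,N)$'' is not justified, since the reduction map $\Hom_R(M,N)\to\Hom_k(M/\m M,N/\m N)$ is in general \emph{not} surjective (take $R=k[[t]]$, $M=R/(t^2)\oplus R/(t)$, $N=R/(t)\oplus R/(t^2)$: the identity on the residues does not lift). What one must show is that \emph{some} isomorphism lies in the image, and this is precisely the nontrivial content; writing $\Psi=\sum\varphi_i\otimes a_i$ only tells you that the image, base-changed to $k_1$, contains an invertible matrix, and passing back to a $k$-point requires care when $k$ is finite. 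The EGA proof handles this differently.

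Your approach to part (2) is genuinely different from the paper's and has two problems. First, you invoke the canonical module $\rmK_S$ (hence implicitly $\rmK_R$), but the hypotheses of the proposition do not guarantee that $\rmK_R$ exists; part (2) is later applied in Theorem \ref{gorm:m} before any such existence is known. Second, the ``$S$-module analogue of (3)'' for the \emph{semi-local} ring $S=\m:\m$ is not established by ``localizing at each maximal ideal of $S$'': even if one obtains $(\rmK_S)_M\cong S_M$ for every maximal $M$, gluing to a global isomorphism $\rmK_S\cong S$ needs the extra input that $\operatorname{Pic}$ of a semi-local ring is trivial, and the local maps $S_M\to (S_1)_N$ no longer satisfy your hypothesis $\m_1=\m R_1$, so your proof of (3) does not transfer verbatim. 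The paper avoids both issues by arguing directly with the flat map $A=\m:\m\to B=R_1\otimes_RA$: descent of Gorensteinness is immediate, and for ascent one checks that each closed fibre $B/MB\cong k_1\otimes_k A/M$ is Gorenstein because the finite field extension $A/M$ of $k$ is a complete intersection $k[X_1,\dots,X_n]/(\xi_1,\dots,\xi_n)$, and this persists after $-\otimes_k k_1$. If you wish to keep the canonical-module route, note that once $\rmK_S$ exists the ascent direction needs no descent lemma at all (just apply $R_1\otimes_R-$ to $\rmK_S\cong S$), while descent is ordinary faithfully flat descent of Gorensteinness; but you must first reduce to a situation where $\rmK_R$ exists, e.g.\ by completing.
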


\begin{proof} 
Since the homomorphism $\varphi$ is flat and local and $R_1/\m R_1$ is a field, the ring $R_1$ is Cohen--Macaulay and $\operatorname{dim} R_1 = \operatorname{dim} R = 1$.

(1) Suppose that $\rmQ(\widehat{R})$ is a Gorenstein ring. Then by Proposition \ref{existI} we may choose a canonical ideal $I$ of $R$. Since $R_1/\m R_1$ is a Gorenstein ring and $I \cong \rmK_R$, by \cite[Satz 6.14]{HK} we get $IR_1 \cong \rmK_{R_1}$, so that $IR_1$ is a canonical ideal of $R_1$. Thus by Proposition \ref{existI} $\rmQ(\widehat{R_1})$ is a Gorenstein ring. Notice that 
$$\ell_{R_1}(R_1/I^{n+1}R_1) = \ell_R(R/I^{n+1})$$ for all integers $n \ge 0$,  because $\m_1 = \m R_1$. Therefore  $\rme_1(IR_1) = \rme_1(I)$.

Conversely, suppose that $\rmQ(\widehat{R_1})$ is a Gorenstein ring and let $\fkp \in \Ass \widehat{R}$. We choose $P \in \Ass \widehat{R_1}$ so that $\fkp = P \cap R$. Then, thanks to the flat descent, $\widehat{R}_{\fkp}$ is a Gorenstein ring, because $\widehat{R_1}_P$ is a Gorenstein ring and the local homomorphism $\widehat{R}_{\fkp} \to \widehat{R_1}_P$ induced from the flat homomorphism $\widehat{\varphi} : \widehat{R} \to \widehat{R_1}$ remains flat. Thus $\widehat{R}_\fkp$ is a Gorenstein ring for every $\fkp \in \Ass \widehat{R}$ and hence $\rmQ(\widehat{R})$ is a Gorenstein ring.

(2) Let $A=\m : \m$ and $B=R_1 \otimes_R A$.  Then $B$ is $A$--flat and $B \cong \m_1 :\m _1$ as $R_1$--algebras. Hence by the flat descent,  $A$ is a Gorenstein ring, if $B$ is a Gorenstein ring. Conversely, suppose that $A$ is a Gorenstein ring. Let $N$ be a maximal ideal in $B$ and we must show that $B_N$ is a Gorenstein ring. Let $M=N \cap A$. Then $M$ is a maximal ideal in $A$, because $$M \cap R = (N \cap R_1) \cap R = \m_1 \cap R = \m$$ (notice that $B$ (resp. $A$) is a module-finite extension of $R_1$ (resp. $R$)). Consequently, in order to see that $B_N$ is a Gorenstein ring, passing to the flat local homomorphism $A_M \to B_N$, it suffices to show that $B/MB$ is a Gorenstein ring.

We now look at the isomorphisms
\begin{align*}
B/MB &\cong R_1 \otimes_R A/M\\
&\cong k_1 \otimes_{R_1}(R_1 \otimes_R A/M) \\
&\cong k_1 \otimes_{k}(k \otimes_R A/M) \\
&\cong k_1 \otimes_{k} A/M.
\end{align*}
Since the field $A/M$ is a finite extension of $k$, we get $$A/M \cong k[X_1, X_2, \ldots , X_n] / (\xi_1, \xi_2, \ldots , \xi_n)$$ where $\xi_1, \xi_2, \ldots , \xi_n$ denotes  a regular sequence in the polynomial ring $k[X_1, X_2, \ldots , X_n]$ over the field $k$.  Therefore $$B/MB = k_1 \otimes_k A/M = k_1[X_1, X_2, \ldots , X_n] / (\xi_1, \xi_2, \ldots , \xi_n)$$ is a Gorenstein ring, because $\xi_1, \xi_2, \ldots , \xi_n$ forms  a regular sequence also in the polynomial ring $k_1[X_1, X_2, \ldots , X_n]$. Thus $B/MB$ is a Gorenstein ring for every maximal ideal $M $ in $A$, so that $B$ is a Gorenstein ring. 

(3) This assertion holds true without the assumption that $R$ is Cohen--Macaulay and $\operatorname{dim} R = 1$. See \cite[Proposition (2.5.8)]{EGA} for the proof.

\end{proof}

\begin{cor}\label{inv}
Suppose that $\rmQ (\widehat{R})$ is a Gorenstein ring. Then the first Hilbert coefficient $\rme_1(I)$  of $I$ is independent of the choice of canonical ideals $I$ of $R$.
\end{cor}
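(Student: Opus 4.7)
The plan is to reduce to the case of infinite residue field, where Corollary \ref{existK2} already gives the independence. Concretely, let $I_1$ and $I_2$ be two canonical ideals of $R$. I would first invoke Lemma \ref{flat} with $k=R/\m$ and any infinite extension $k_1/k$ (e.g., a purely transcendental one) to obtain a flat local homomorphism $\varphi:(R,\m)\to(R_1,\m_1)$ satisfying $\m_1=\m R_1$ and $R_1/\m_1\cong k_1$.

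Next, I would apply Proposition \ref{f} to $\varphi$. Part (1) of that proposition guarantees that $\rmQ(\widehat{R_1})$ is a Gorenstein ring and that $I_jR_1$ is a canonical ideal of $R_1$ with $\rme_1(I_jR_1)=\rme_1(I_j)$ for $j=1,2$. So it suffices to prove the equality $\rme_1(I_1R_1)=\rme_1(I_2R_1)$ for the ring $R_1$, which has infinite residue field.

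For $R_1$, Corollary \ref{existK} produces an $R_1$-submodule $K$ of $\rmQ(R_1)$ with $R_1\subseteq K\subseteq \overline{R_1}$ and $K\cong \rmK_{R_1}$. This is precisely condition (1) of Corollary \ref{existK2} applied to $R_1$; the last sentence of that corollary then tells us that $\rme_1(J)$ is independent of the choice of canonical ideal $J$ of $R_1$. In particular $\rme_1(I_1R_1)=\rme_1(I_2R_1)$, and combining this with the preservation $\rme_1(I_jR_1)=\rme_1(I_j)$ from Proposition \ref{f}(1) yields $\rme_1(I_1)=\rme_1(I_2)$, as required.

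The only potential obstacle is making sure the reduction step is legitimate, i.e., that the flat base change $R\to R_1$ really does preserve both the canonical property of the ideal and its first Hilbert coefficient. But both facts are already recorded in Proposition \ref{f}(1), so the argument is essentially a direct bookkeeping reduction; there is no substantive new content beyond the earlier preparatory results.
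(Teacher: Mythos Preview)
Your proof is correct and follows essentially the same route as the paper: reduce to infinite residue field via Lemma \ref{flat} and Proposition \ref{f}(1), then invoke the independence statement in Corollary \ref{existK2}. The only difference is cosmetic---you pass through Corollary \ref{existK} to verify condition (1) of Corollary \ref{existK2}, whereas one could equally well note that condition (2) holds directly once $R_1/\m_1$ is infinite.
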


\begin{proof} After enlarging the residue class field $R/\m$ of $R$,  by Proposition \ref{f} (1) we may assume that the field $R/\m$ is infinite. Hence the assertion readily follows from Corollary \ref{existK2}.
\end{proof}

Proposition \ref{f} is sufficiently general for our purpose, since we need exactly the fact that the Gorenstein property of $\rmQ (\widehat{R})$ is preserved after enlarging the residue class field. We actually do not know whether the property in the ring $R$ of being analytically unramified is preserved after enlarging the residue class field. 

Let us note the following.

\begin{question}\label{question}
Let $R$ be a Cohen-Macaulay local ring with maximal ideal $\m$ and $\dim R=1$. Let $k_1/k$ be an extension of fields where $k = R/\m$. Suppose that $\widehat{R}$ is a reduced ring. In this setting, can we always choose a flat local homomorphism $(R, \m) \to (R_1, \m_1)$ of Noetherian local rings so that the following three conditions are satisfied?
\begin{enumerate}[{\rm (a)}]
\item $\m_1 = \m R_1$.
\item $R_1 / \m_1 \cong k_1$ as $k$--algebras.
\item $\widehat{R_1}$ is a reduced ring.
\end{enumerate}
\end{question}

\section{Almost Gorenstein rings} 
In this section we define almost Gorenstein rings and give  characterizations.

Let $R$ be a Cohen-Macaulay local ring with maximal ideal $\m$ and $\operatorname{dim} R = 1$.  

\begin{defn}\label{almostGor}
We say that $R$ is an almost Gorenstein ring, if $R$ possesses a canonical ideal $I$ such that $\rme_1(I) \le \rmr(R)$. 
\end{defn}

This definition is well-defined, because by  Corollary \ref{inv} the value $\rme_1(I)$ is independent of the choice of canonical ideals $I$. If $R$ is a Gorenstein ring, one can choose any parameter ideal $Q$ of $R$ to be a canonical ideal, so that $\rme_1(Q) = 0 < 1 = \rmr (R)$. Hence every one--dimensional Gorenstein local ring is almost Gorenstein.

Before going ahead, let us note basic examples of almost Gorenstein rings which are not Gorenstein. See Section 4 for more examples of $3$--generated numerical semigroup rings.

\begin{ex}\label{ex} Let $k$ be a field. 
\begin{enumerate}[{\rm (1)}]
\item We look at the rings $R_1 = k[[t^3, t^4, t^5]]$, $R_2= k[[X,Y,Z]]/ (X,Y) \cap (Y, Z) \cap (Z, X)$, and $R_3= k[[X,Y,Z,W]]/ (Y^2, Z^2, W^2, YW, ZW, XW- YZ)$, where $k[[t]]$, $k[[X,Y,Z]]$, and $k[[X, Y, Z, W]]$ denote the formal power series rings over $k$. Then these rings $R_1, R_2$, and $R_3$ are  almost Gorenstein rings with $\rmr (R_1) = \rmr (R_2) = 2$ and $\rmr (R_3) = 3$. The ring $R_1$ is an integral domain, $R_2$ is a reduced ring but not an integral domain, and $R_3$ is not a reduced ring.
\item Let $a \ge 3$ be an integer and put $R=k[[t^a, t^{a+1}, t^{a^2-a-1}]]$. Then 
$
\rme_1(I) = \frac{a(a-1)}{2} - 1
$
for  canonical ideals $I$ of $R$. Since $\rmr(R) = 2$, $R$ is an almost Gorenstein ring if and only if $a=3$. This example suggests that  almost Gorenstein rings are rather rare.
\end{enumerate}
\end{ex}

We note the following.

\begin{prop}\label{ff}
Let $\varphi : (R, \m) \to (R_1, \m_1)$ be a flat local homomorphism of Noetherian local rings and assume that $\m_1 = \m R_1$. Then the following conditions are equivalent.
\begin{enumerate}[$(1)$]
\item $R_1$ is an almost Gorenstein ring.
\item $R$ is an almost Gorenstein ring.
\end{enumerate}
When this is the case, $\rmr ({R_1}) = \rmr (R)$ and for every canonical ideal $I$ of $R$, $IR_1$ is a canonical ideal of $R_1$ with $\rme_1(IR_1) = \rme_1(I)$.
\end{prop}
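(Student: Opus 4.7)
My plan is to reduce Proposition \ref{ff} to Proposition \ref{f} combined with a length computation for the Cohen--Macaulay type. First, since $\varphi$ is flat local with $\m_1=\m R_1$, the ring $R_1$ is Cohen--Macaulay of dimension one, and the hypotheses of Proposition \ref{f} are met with $k_1:=R_1/\m_1$ regarded as an extension of $k:=R/\m$. Proposition \ref{f}(1) then gives, in one stroke, that $\rmQ(\widehat{R})$ is Gorenstein if and only if $\rmQ(\widehat{R_1})$ is Gorenstein, and that $IR_1$ is a canonical ideal of $R_1$ with $\rme_1(IR_1)=\rme_1(I)$ whenever $I$ is a canonical ideal of $R$.

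Second, I would verify that $\rmr(R_1)=\rmr(R)$. By flat base change along $\varphi$,
\[
\Ext^1_{R_1}(R_1/\m_1,\,R_1)\;\cong\;\Ext^1_R(R/\m,\,R)\otimes_R R_1,
\]
and since $\Ext^1_R(R/\m,R)$ is a $k$-vector space of dimension $\rmr(R)$, the right-hand side is a $k_1$-vector space of the same dimension. Its length over $R_1$ is therefore $\rmr(R)$, so $\rmr(R_1)=\rmr(R)$.

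With these two ingredients the equivalence is formal. If $R$ is almost Gorenstein with canonical ideal $I$ satisfying $\rme_1(I)\le\rmr(R)$, then $IR_1$ is canonical for $R_1$ and $\rme_1(IR_1)=\rme_1(I)\le\rmr(R)=\rmr(R_1)$, so $R_1$ is almost Gorenstein. Conversely, if $R_1$ is almost Gorenstein, then $\rmQ(\widehat{R_1})$, and hence $\rmQ(\widehat{R})$, is Gorenstein, so $R$ admits a canonical ideal $I$; by Corollary \ref{inv} the value of $\rme_1$ is the same for every canonical ideal of $R_1$, so $\rme_1(I)=\rme_1(IR_1)\le\rmr(R_1)=\rmr(R)$, giving that $R$ is almost Gorenstein. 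The only mild subtlety is this last step, where Corollary \ref{inv} is needed to transfer the defining inequality from an unspecified canonical ideal of $R_1$ to the specific ideal $IR_1$; everything else is a direct invocation of Proposition \ref{f} and flat base change.
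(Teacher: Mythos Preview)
Your proof is correct and follows essentially the same route as the paper: both reduce everything to Proposition \ref{f}(1) for the Gorenstein property of $\rmQ(\widehat{R})$, the canonicity of $IR_1$, and the equality $\rme_1(IR_1)=\rme_1(I)$, and then match the Cohen--Macaulay types. The only cosmetic difference is that the paper obtains $\rmr(R_1)=\rmr(R)$ via $\rmr(R)=\mu_R(\rmK_R)$ (\cite[Satz~6.10]{HK}) together with $\mu_{R_1}(IR_1)=\mu_R(I)$, whereas you use flat base change for $\Ext^1$; both are standard one-liners, and your explicit invocation of Corollary \ref{inv} in the converse direction just makes precise what the paper leaves implicit.
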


\begin{proof}
Thanks to Proposition \ref{f} (1), we may assume that $\rmQ (\widehat{R})$ is a Gorenstein ring. Let $I$ be a canonical ideal of $R$. Then by Proposition \ref{f} (1) $IR_1$ is a canonical ideal of $R_1$ with $\rme_1(IR_1) = \rme_1(I)$, while $\rmr (R_1) = \mu_{R_1}(IR_1) = \mu_R(I) = \rmr(R)$ (\cite[Satz 6.10]{HK}). Hence the equivalence of conditions (1) and (2)  follows from Definition \ref{almostGor}.
\end{proof}

We now develop the theory of almost Gorenstein rings. For this purpose let us maintain the following setting throughout this section. Thanks to  Lemma \ref{flat}, Proposition \ref{ff}, and Corollary  \ref{existK}, we may assume this setting, after enlarging the residue class field $R/\m$ of $R$ to be infinite.

\begin{setting}\label{setting}
Let $K$ be an $R$--submodule of $\rmQ(R)$ such that $R \subseteq K \subseteq \overline{R}$ and $K \cong \rmK_R$ as $R$--modules. Let $S=R[K]$ and $\fkc = R:S$ the conductor of $S$. We choose a regular element $a \in \m$ so that $a K \subsetneq R$ and put $I=aK$, $Q=(a)$. 
\end{setting}

Notice that $Q$ is a reduction of the canonical ideal $I$ of $R$ and $S = R[\frac{I}{a}]$.

We begin with the following.

\begin{lem}\label{3.4}
\begin{enumerate}[{\rm (1)}]
\item Let $T$ be a subring of $\rmQ (R)$ such that $K \subseteq T$ and $T$ is a finitely generated $R$--module. Then $R:T = K : T$. 
\item $\fkc = K:S$ and $\ell_R(R/\fkc) = \ell_R(S/K)$.
\item $\ell_R(I/Q) = \ell_R(K/R)$ and $\ell_R(S/R) = \ell_R(R/\fkc) + \ell_R(I/Q)$.
\end{enumerate}
\end{lem}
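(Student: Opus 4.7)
The plan is to reduce everything to (1) and to the standard fact that $\End_R(K)=K:K=R$, which holds because $R$ is a one-dimensional Cohen--Macaulay local ring (so it satisfies $S_2$) and $K\cong\rmK_R$.

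For (1), the inclusion $R:T\subseteq K:T$ is immediate from $R\subseteq K$. For the reverse, I would first observe that $K:T$ is naturally a $T$-module: if $xT\subseteq K$ and $t\in T$, then $(xt)T\subseteq xT\subseteq K$ since $T$ is a ring, hence $xt\in K:T$. Next, because $K\subseteq T$ I get $K:T\subseteq K:K=R$. Combining these two facts, $K:T$ is a $T$-submodule of $R$, so $T\cdot(K:T)\subseteq K:T\subseteq R$, which is exactly the statement that $K:T\subseteq R:T$.

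For (2), applying (1) with $T=S$ (which contains $K$ by construction $S=R[K]$ and is module-finite over $R$ since $S\subseteq\overline{R}$ and $S\cong I^r$) gives $\fkc=R:S=K:S$. For the length equality, I would apply $\Hom_R(-,K)$ to the short exact sequence $0\to K\to S\to S/K\to 0$. The relevant values are $\Hom_R(S,K)=K:S=\fkc$, $\Hom_R(K,K)=R$, $\Hom_R(S/K,K)=0$ (since $S/K$ is torsion and $K$ is torsion-free, having $\Ass_R K=\Ass R=\Min R$), and $\Ext^1_R(S,K)=0$ (since $S$ is a one-dimensional Cohen--Macaulay $R$-module and $K$ is the canonical module). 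The resulting four-term exact sequence collapses to $0\to\fkc\to R\to\Ext^1_R(S/K,K)\to 0$, whence $R/\fkc\cong\Ext^1_R(S/K,K)$. By local duality, the latter is the Matlis dual of the finite-length module $S/K$, giving $\ell_R(R/\fkc)=\ell_R(S/K)$.

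For (3), multiplication by the regular element $a$ is an $R$-module isomorphism $K/R\xrightarrow{\sim} aK/aR=I/Q$, so $\ell_R(I/Q)=\ell_R(K/R)$. Additivity of length along $R\subseteq K\subseteq S$ combined with (2) then yields $\ell_R(S/R)=\ell_R(S/K)+\ell_R(K/R)=\ell_R(R/\fkc)+\ell_R(I/Q)$. The only nontrivial step is the length computation in (2); the main obstacle there is to justify cleanly the two Ext-vanishings and the local duality isomorphism, but in the one-dimensional Cohen--Macaulay setting these are all standard consequences of $K$ being the canonical module.
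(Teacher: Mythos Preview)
Your proof is correct. For part (1) you argue elementwise that $K:T$ is a $T$-module contained in $K:K=R$, hence contained in $R:T$; the paper instead writes the one-line chain $K:T=K:KT=(K:K):T=R:T$, using that $KT=T$ (since $1\in K\subseteq T$) and the colon identity $K:(AB)=(K:A):B$. These are two packagings of the same idea.

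The only genuine difference is in part (2). The paper invokes ``canonical duality'' directly on fractional ideals: the map $X\mapsto K:X$ is a length-preserving anti-involution, so applying it to the inclusion $\fkc=K:S\subseteq K:K=R$ yields the inclusion $K=K:R\subseteq K:(K:S)=S$ with $\ell_R(R/\fkc)=\ell_R(S/K)$ immediately. You instead apply $\Hom_R(-,K)$ to $0\to K\to S\to S/K\to 0$, use $\Ext^1_R(S,K)=0$ (maximal Cohen--Macaulay against the canonical module) to get $R/\fkc\cong\Ext^1_R(S/K,K)$, and then invoke local duality for the finite-length module $S/K$. Your route is a homological unpacking of exactly the same duality the paper cites; it is slightly longer but has the advantage of making the vanishing hypotheses explicit rather than hiding them in a reference to \cite{HK}. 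Part (3) is identical in both proofs.
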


\begin{proof}
For each subring $T$ of $\rmQ (R)$ such that $K \subseteq T$ and $T$ is a finitely generated $R$--module, we have $$K:T = K:KT = (K:K):T = R:T,$$ since $R = K : K$ (\cite[Bemerkung 2.5]{HK}). Therefore, taking $T = S$, we get $\ell_R(R/\fkc) = \ell_R(R/(K:S))$, while $$ \ell_R(R/(K:S)) = \ell_R([K:(K:S)]/(K:R)) = \ell_R(S/K),$$ thanks to the canonical duality (\cite[Bemerkung 2.5]{HK}). Thus $\ell_R(R/\fkc) = \ell_R(S/K)$. Since $K=\frac{I}{a}$, we get $\ell_R(I/Q) = \ell_R(K/R)$, so that 
\begin{eqnarray*}\label{K/A}
\ell_R(S/R)& = &\ell_R(S/K) + \ell_R(K/R)\\
&=& \ell_R(R/\fkc) + \ell_R(I/Q)\nonumber.
\end{eqnarray*}
\end{proof}

Since $\mu_R(I) = \rmr (R)$ (\cite[Satz 6.10]{HK}), combining Proposition  \ref{ineq} with Lemma \ref{3.4}, we get the following, which is the key for our argument.

\begin{prop}\label{canonineq}
$0 \le \rmr(R)-1 = \mu_R(I)- 1 \leq \ell_R(I/Q) \le \rme_1(I) = \ell_R(R/\fkc) + \ell_R(I/Q).$
\end{prop}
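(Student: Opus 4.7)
My plan is to assemble the chain of (in)equalities in the statement by stitching together the three ingredients already established: Proposition \ref{ineq}, Lemma \ref{calce1}, and Lemma \ref{3.4}, together with the standard fact \cite[Satz 6.10]{HK} that $\mu_R(I) = \rmr(R)$ for any canonical ideal $I$. There is essentially no new computation to do; the task is just to arrange the pieces in the right order.

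First I would record the identity $\rmr(R) - 1 = \mu_R(I) - 1$ from \cite[Satz 6.10]{HK}. Since $I$ is $\m$--primary and $Q = (a)$ is a minimal reduction of $I$ (by the choice of $a$ in Setting \ref{setting}, as $Q \subseteq I \subseteq a\overline{R}$), Nakayama's lemma gives $\mu_R(I) - 1 = \mu_R(I/Q)$, and Proposition \ref{ineq} supplies the inequality $\mu_R(I/Q) \le \ell_R(I/Q)$. This establishes the left half of the displayed chain.

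Next I would invoke Proposition \ref{ineq} again for the Northcott inequality $\ell_R(I/Q) \le \rme_1(I)$. To identify $\rme_1(I)$ with $\ell_R(R/\fkc) + \ell_R(I/Q)$, I would use Lemma \ref{calce1} to rewrite $\rme_1(I) = \ell_R(S/R)$, where $S = R[\tfrac{I}{a}] = R[K]$ (the equality $R[\tfrac{I}{a}] = R[K]$ holds because $K = \tfrac{I}{a}$). Then Lemma \ref{3.4}(3) gives exactly $\ell_R(S/R) = \ell_R(R/\fkc) + \ell_R(I/Q)$, which closes the chain.

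There is no serious obstacle; the only point worth a sentence of care is checking that $Q = (a)$ really is a reduction of $I$ in the present setting (so that Proposition \ref{ineq} and Lemma \ref{calce1} apply), which is immediate from $I = aK$ and $K \subseteq \overline{R}$, since then $I \subseteq a\overline{R} = \overline{Q}$. Everything else is direct substitution among the already-proved identities.
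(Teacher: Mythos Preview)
Your proposal is correct and follows essentially the same route as the paper: the paper's proof is a one-line remark that the proposition follows by combining Proposition \ref{ineq} with Lemma \ref{3.4} together with $\mu_R(I)=\rmr(R)$ from \cite[Satz 6.10]{HK}, which is exactly the assembly you describe. Your extra remarks (Nakayama for $\mu_R(I)-1=\mu_R(I/Q)$, and the check that $Q$ is a reduction of $I$) are already absorbed into Proposition \ref{ineq} and Setting \ref{setting}, so they are harmless redundancies rather than new ingredients.
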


First of all let us note a characterization of Gorenstein rings.

\begin{thm} \label{gor}
The following conditions are equivalent.
\begin{enumerate}[{\rm (1)}]
\item $R$ is a Gorenstein ring.
\item $K = R$.
\item $S = K$.
\item $S = R$.
\item $\ell_R(S/R) = \ell_R(R/\fkc)$.
\item $I^2=QI$.
\item $\rme_1(I) = 0$.
\item $\rme_1(I) = \rmr(R)-1$.
\end{enumerate} 
\end{thm}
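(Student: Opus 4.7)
The plan is to establish a short ring-theoretic cycle $(1) \Leftrightarrow (2) \Rightarrow (3) \Rightarrow (4) \Rightarrow (2)$, then deduce the remaining equivalences $(4) \Leftrightarrow (5)$, $(3) \Leftrightarrow (6)$, $(4) \Leftrightarrow (7)$, and $(1) \Leftrightarrow (8)$ by translating each of the length- or multiplicity-theoretic conditions into one of the ring-theoretic conditions via Lemma~\ref{3.4}, Lemma~\ref{calce1}, and Proposition~\ref{canonineq}.

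For $(1) \Leftrightarrow (2)$, the direction $(2) \Rightarrow (1)$ is immediate since $K \cong \rmK_R$. For $(1) \Rightarrow (2)$, if $R$ is Gorenstein then $K \cong \rmK_R \cong R$, so $K = \xi R$ for some $\xi \in \rmQ(R)$. The inclusion $R \subseteq K$ forces $\xi^{-1} \in R$, while $K \subseteq \overline{R}$ forces $\xi$ integral over $R$; clearing denominators in an integral equation for $\xi$ exhibits $\xi^{-1}$ as a unit of $R$, so $\xi \in R$ and $K = R$. The ring-theoretic cycle is then routine: $K = R$ gives $S = R[K] = R = K$; $S = K$ combined with Lemma~\ref{3.4}(2) yields $\ell_R(R/\fkc) = \ell_R(S/K) = 0$, so $\fkc = R$, forcing $S \subseteq R$ and hence $S = R$; and $S = R$ squeezes $R \subseteq K \subseteq S = R$ down to $K = R$.

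The remaining equivalences follow mechanically. For $(4) \Leftrightarrow (5)$, Lemma~\ref{3.4}(3) rewrites $(5)$ as $\ell_R(I/Q) = 0$, i.e., $I = Q$; since $I = aK$ and $Q = (a)$, this is $K = R$. For $(3) \Leftrightarrow (6)$, since $K = \tfrac{I}{a}$ and $S = R[\tfrac{I}{a}]$, the identity $S = K$ is equivalent to $(\tfrac{I}{a})^2 \subseteq \tfrac{I}{a}$, i.e., $I^2 \subseteq aI = QI$. For $(4) \Leftrightarrow (7)$, Lemma~\ref{calce1} gives $\rme_1(I) = \ell_R(S/R)$, so $(7)$ is $S = R$. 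For $(1) \Leftrightarrow (8)$, Gorenstein clearly implies $\rme_1(I) = 0$ and $\rmr(R) = 1$. Conversely, assuming $(8)$, Proposition~\ref{canonineq} produces
\[ \mu_R(I) - 1 \le \ell_R(I/Q) \le \rme_1(I) = \ell_R(R/\fkc) + \ell_R(I/Q), \]
and the hypothesis $\rme_1(I) = \rmr(R) - 1 = \mu_R(I) - 1$ collapses this chain, forcing $\ell_R(R/\fkc) = 0$, whence $\fkc = R$ and $S = R$, which is $(4)$.

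The principal obstacle is the implication $(1) \Rightarrow (2)$: the abstract isomorphism $K \cong \rmK_R \cong R$ must be upgraded to the honest equality $K = R$, and this is exactly where the placement $K \subseteq \overline{R}$ in Setting~\ref{setting} is decisive---without integrality of $K$ over $R$, one could only conclude that $K$ is a principal fractional ideal. The rest of the equivalences amount to bookkeeping via Lemma~\ref{3.4} and Proposition~\ref{canonineq}.
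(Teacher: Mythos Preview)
Your proof is correct and follows essentially the same approach as the paper, relying on the same tools (Lemma~\ref{calce1}, Lemma~\ref{3.4}, Proposition~\ref{canonineq}) to translate between the ring-theoretic and length-theoretic conditions. Minor differences are that the paper handles $(3)\Rightarrow(4)$ via $S=S:S=K:K=R$ rather than Lemma~\ref{3.4}(2), and routes $(6)$ through $\ell_R(I/Q)=\rme_1(I)$ (Proposition~\ref{ineq}(2)) instead of your direct $S=K \Leftrightarrow K^2\subseteq K$ argument, but these are inessential reorderings.
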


\begin{proof}
By Propositions \ref{ineq} and \ref{canonineq} we have $I^2 = QI$ if and only if  $\ell_R(I/Q) = \rme_1(I)$ if and only if $R = \fkc$, i.e., $R=S$. When this is the case, we get $K = R$, so that $R$ is a Gorenstein ring. If $R$ is a Gorenstein ring, then the canonical ideal $I$ is principal. Therefore $I = Q$, i.e., $K = R$, and so we certainly have  $I^2 = QI$ and $S = R[K] = R$. Similarly, if $S=K$, then $S= S: S = K : K = R$, so that $R$ is a Gorenstein ring. Thus conditions (1), (2), (3), (4), and (6) are equivalent.  Since $\rme_1(I) = \ell_R(S/R)$ by Lemma \ref{calce1}, by Proposition \ref{canonineq} condition (5) is equivalent to saying that $I = Q$, i.e., $K = R$.
If $R$ is a Gorenstein ring, then $I = Q$, so that $\rme_1(I) = 0 =  \rmr (R) - 1$. If $\rme_1(I) = \rmr (R) - 1$, then by Proposition \ref{canonineq} $\ell_R(I/Q) = \rme_1(I)$. Therefore $R$ is a Gorenstein ring. If $\rme_1(I) = 0$, then $\rmr (R) = 1$ by Proposition \ref{canonineq}, so that  $R$ is a Gorenstein ring. 
\end{proof}

\begin{cor}\label{notgor} The following assertions hold true.
\begin{enumerate}
\item[$(1)$]Suppose that $R$ is not a Gorenstein ring. Then $K:\m \subseteq S$.
\item[$(2)$]$S$ is a Gorenstein ring if and only if $\fkc^2 = a \fkc$ for some $a \in \fkc$.
\end{enumerate}
\end{cor}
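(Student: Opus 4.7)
The plan is to derive both statements from the canonical-duality identity $K:(K:M)=M$ for maximal Cohen--Macaulay fractional $R$-ideals $M$, combined with the formulas in Lemma \ref{3.4}. Since $S$ is a module-finite $R$-subalgebra of $\overline{R}$, it is a maximal Cohen--Macaulay $R$-module and hence reflexive with respect to $K$; together with $\fkc=K:S$ from Lemma \ref{3.4}(2) this yields the key formula $K:\fkc=S$, which will do the heavy lifting in both parts.

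For (1), assume $R$ is not Gorenstein. Then Theorem \ref{gor} gives $S\neq R$, so $\fkc=R:S$ is a proper ideal of $R$ and is therefore contained in $\m$. Applying the order-reversing operation $K:{-}$ to the inclusion $\fkc\subseteq\m$ then gives
$K:\m \subseteq K:\fkc = S$, which is exactly the claim.

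For (2), the first step is to identify $\fkc$ with the canonical module $K_S$ of the finite $R$-algebra $S$. Standard theory gives $K_S\cong\Hom_R(S,K)$, and as a fractional ideal in $\rmQ(R)=\rmQ(S)$ this is $K:S=\fkc$ by Lemma \ref{3.4}(2). Consequently $S$ is Gorenstein if and only if $\fkc$ is a principal $S$-ideal, i.e.\ $\fkc=aS$ for some nonzerodivisor $a\in\fkc$. The implication $\fkc=aS\Rightarrow\fkc^{2}=a\fkc$ is immediate. Conversely, suppose $\fkc^{2}=a\fkc$ with $a\in\fkc$; since $a$ is a nonzerodivisor we may rewrite this as $a^{-1}\fkc\cdot\fkc=\fkc$, so $a^{-1}\fkc\subseteq\fkc:\fkc$. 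The standard identification $\fkc:\fkc=\End_S(K_S)=S$ then forces $\fkc\subseteq aS$, and combining with the obvious inclusion $aS\subseteq\fkc$ yields $\fkc=aS$, as required.

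The main delicate point will be the clean invocation of the two canonical-module identities $K:(K:S)=S$ and $\fkc:\fkc=S$. Both are standard consequences of Herzog--Kunz theory once one checks that $S$ is maximal Cohen--Macaulay over $R$ (being a torsion-free finite $R$-module of positive depth) and that $\fkc=K_S$ is a faithful fractional $S$-ideal, which holds because the canonical module is faithful over any Cohen--Macaulay ring and $\fkc$ is $\m$-primary as soon as $S\neq R$.
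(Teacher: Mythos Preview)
Your proof is correct and takes a genuinely different route from the paper in both parts. For (1), the paper argues via the fact that $\ell_R((K:\m)/K)=1$ (\cite[Satz 3.3]{HK}): since $K\subsetneq S$ by Theorem \ref{gor}, the finite-length module $S/K$ has nonzero socle $((K:\m)\cap S)/K$, and this, being a nonzero submodule of the simple module $(K:\m)/K$, must exhaust it; hence $K:\m\subseteq S$. Your argument via $\fkc\subseteq\m$ together with the reflexivity identity $K:\fkc=K:(K:S)=S$ is slicker and makes no use of the length-one fact (which the paper needs anyway later, in Theorem \ref{algorcor}); the reflexivity you invoke is already used in the paper's proof of Lemma \ref{3.4}(2), so nothing new is being imported. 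For (2), both proofs identify $\fkc$ with $\rmK_S$ and handle the forward direction identically; for the converse the paper localizes and applies condition (6) of Theorem \ref{gor} to each $S_M$, whereas you stay global and deduce $\fkc=aS$ directly from $\fkc:\fkc=S$. Your route avoids the localization step entirely. One small point worth adding: you assert without comment that $a$ is a nonzerodivisor. If $S=R$ then $\fkc=R$ and the statement is trivial; otherwise $\fkc$ is $\m$-primary, and localizing $\fkc^2=a\fkc$ at any $\p\in\Ass R$ yields $R_\p=aR_\p$, which by Nakayama forces $a\notin\p R_\p$.
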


\begin{proof}
(1) We have $K:\m \subseteq S$, since $\ell_R((K : \m)/K) = 1$ (\cite[Satz 3.3]{HK}) and $K \subsetneq S$ by Theorem \ref{gor}.

(2) By \cite[Satz 5.12]{HK} $\fkc S_M$ is a canonical ideal of $S_M$ for each maximal ideal $M$ of $S$, because $\fkc = K:S \cong \Hom_R(S,\rmK_R)$ by Lemma \ref{3.4}. Therefore, if $S$ is a Gorenstein ring, the ideal $\fkc$ of $S$ is locally free of rank one, so that $\fkc \cong S$ as $S$--modules. Hence $\fkc = aS$ for some $a \in \fkc$ and therefore $\fkc^2 = a \fkc$. Conversely, suppose that $\fkc^2 = a\fkc$ for some $a \in \fkc$. Then $S$ is a Gorenstein ring, thanks to  Theorem \ref{gor} which we apply to the local rings $S_M$ with $M \in \Max S$.
\end{proof}

Let us add the following.

\begin{prop}\label{rego1} The following conditions are equivalent.
\begin{enumerate}
\item[$(1)$] $R$ is a Gorenstein ring. 
\item[$(2)$] $\Ext_R^1(S,R) = (0)$.
\item[$(3)$] There exists a subring $T$ of $\rmQ (R)$ such that $K \subseteq T$, $T$ is a finitely generated $R$--module, and $\Ext_R^1(T, R) = (0)$.
\item[$(4)$] $\Ext_R^1(T, R) = (0)$ for every subring $T$ of $\rmQ (R)$ such that $K \subseteq T$ and $T$ is a finitely generated $R$--module. 
\end{enumerate}
\end{prop}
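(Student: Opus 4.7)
The plan is to close the cycle $(1) \Rightarrow (4) \Rightarrow (2) \Rightarrow (3) \Rightarrow (1)$. The intermediate implications $(4) \Rightarrow (2) \Rightarrow (3)$ are immediate upon taking $T = S$, since the subring of Setting \ref{setting} satisfies $K \subseteq S \subseteq \overline{R}$ and is module-finite over $R$, and (2) trivially witnesses the existential statement (3).

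For $(1) \Rightarrow (4)$, any $T$ as in the statement is torsion-free (as a submodule of $\rmQ(R)$) and finitely generated over $R$, hence a maximal Cohen--Macaulay $R$--module since $\dim R = 1$. When $R$ is Gorenstein, $R \cong \rmK_R$ as $R$--modules, so
$$\Ext_R^1(T, R) \cong \Ext_R^1(T, \rmK_R) = 0$$
by the standard vanishing of positive $\Ext$ into the canonical module for MCM modules (equivalently, local duality applied to $H_\m^0(T) = 0$).

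The heart of the argument is $(3) \Rightarrow (1)$. I would apply $\Hom_R(T, -)$ to the exact sequence $0 \to R \to K \to K/R \to 0$; since $T$ is MCM and $K \cong \rmK_R$, $\Ext_R^1(T, K) = 0$, producing
$$0 \to \Hom_R(T, R) \to \Hom_R(T, K) \to \Hom_R(T, K/R) \to \Ext_R^1(T, R) \to 0.$$
Identifying the first two terms with $R:T$ and $K:T$ respectively as submodules of $\rmQ(R)$, the leftmost arrow is nothing but the inclusion $R:T \hookrightarrow K:T$. By Lemma \ref{3.4}(1) these coincide, so the arrow is an isomorphism and therefore $\Hom_R(T, K/R) \cong \Ext_R^1(T, R)$, which vanishes by hypothesis.

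It remains to deduce $K = R$, at which point Theorem \ref{gor} gives (1). Suppose for contradiction $K \neq R$. Then $K/R$ is a nonzero $R$--module of finite length, whose socle contains a copy of the residue field $k = R/\m$. Composing the Nakayama surjection $T \twoheadrightarrow T/\m T$ with any $k$--linear surjection $T/\m T \twoheadrightarrow k$ and the embedding $k \hookrightarrow \Soc(K/R) \subseteq K/R$ produces a nonzero $R$--linear map $T \to K/R$, contradicting $\Hom_R(T, K/R) = 0$. The crux of the proof is noticing that Lemma \ref{3.4}(1) is precisely what forces $\Hom_R(T, R) \to \Hom_R(T, K)$ to be an isomorphism; everything else is a formal diagram chase and a one-line socle argument.
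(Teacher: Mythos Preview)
Your proof is correct and follows essentially the same route as the paper: the cycle $(1)\Rightarrow(4)\Rightarrow(2)\Rightarrow(3)\Rightarrow(1)$, the application of $\Hom_R(T,-)$ to $0\to R\to K\to K/R\to 0$, and the use of Lemma~\ref{3.4}(1) to identify $R:T$ with $K:T$ are exactly what the paper does. The only cosmetic differences are that the paper truncates the long exact sequence at $\Hom_R(T,K/R)\to \Ext_R^1(T,R)=0$ directly (so invoking $\Ext_R^1(T,K)=0$ is unnecessary, though harmless), and the paper dispatches $K/R=0$ in one line from $T\ne 0$ finitely generated and $\ell_R(K/R)<\infty$, whereas you spell out the socle construction explicitly.
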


\begin{proof}
(1) $\Rightarrow$ (4) Remember that by \cite[Korollar 6.8]{HK} $\Ext_R^1(M,R) = (0)$ for every Cohen-Macaulay $R$--module $M$ with $\operatorname{dim}_RM = 1$, since $R$ is a Gorenstein ring.

(4) $\Rightarrow$ (2) and  (2) $\Rightarrow$ (3) These implications are clear.

(3) $\Rightarrow$ (1)
We apply the functor $\Hom_R(T,*)$ to the exact sequence $$0 \to R \overset{\iota}{\to} K \to K/R \to 0$$ and get the exact sequence $$0 \to R : T \overset{\iota}{\to} K: T \to \Hom_R(T, K/R) \to 0$$
of $R$--modules, where $\iota's$ denote the inclusion. We then have $\Hom_R(T, K/R)  = (0)$, because 
$R : T = K: T$ by Lemma \ref{3.4} (1). This asserts that   $K/R = (0)$, since $T$ is a finitely generated $R$--module such that $T \ne (0)$ and $\ell_R(K/R) < \infty$. Hence $R$ is a Gorenstein ring. 
\end{proof}

As a direct consequence of Proposition \ref{rego1}, we are able to recover the following result of C. J. Rego \cite{Rego}.

\begin{cor}[{\cite[Theorem]{Rego}}]\label{rego2} Suppose that $R$ is analytically unramified and let $\fkC = R : \overline{R}$ be the conductor of $\overline{R}$. Assume that $\Ext_R^1(\fkC,R) = (0)$. Then $R$ is a Gorenstein ring.
\end{cor}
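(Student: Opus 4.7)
My plan is to apply Proposition \ref{rego1} with $T = \overline R$. Since $R$ is analytically unramified, Nagata's theorem guarantees that the normalization $\overline R$ is a finitely generated $R$-module, and because $R$ is reduced and one-dimensional, $\overline R$ is a finite product of discrete valuation rings---hence a regular semilocal ring of global dimension at most one. After enlarging the residue class field if necessary via Lemma \ref{flat} and Proposition \ref{f}, we may work in Setting \ref{setting} so that $K \subseteq \overline R$, making $\overline R$ an admissible choice of $T$. Consequently it suffices to prove that $\Ext^1_R(\overline R, R) = 0$.

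The key tool I would use is the Grothendieck change-of-rings spectral sequence for the finite ring map $R \to \overline R$,
\[
E_2^{p,q} \;=\; \Ext^p_{\overline R}\bigl(N,\, \Ext^q_R(\overline R, R)\bigr) \;\Longrightarrow\; \Ext^{p+q}_R(N, R),
\]
arising from the adjunction $\Hom_R(-, R) \cong \Hom_{\overline R}\bigl(-,\, \Hom_R(\overline R, R)\bigr) = \Hom_{\overline R}(-, \fkC)$ on $\overline R$-modules. Because $\overline R$ has global dimension at most one, the rows $p \ge 2$ vanish and the sequence collapses, yielding for every $\overline R$-module $N$ a short exact sequence
\[
0 \to \Ext^1_{\overline R}(N, \fkC) \to \Ext^1_R(N, R) \to \Hom_{\overline R}\bigl(N,\, \Ext^1_R(\overline R, R)\bigr) \to 0.
\]
I would specialise this to $N = \fkC$, regarded as a nonzero ideal of the semilocal Dedekind-like ring $\overline R$. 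Since $\fkC$ is then invertible over $\overline R$, one has $\Ext^1_{\overline R}(\fkC, \fkC) = 0$, and the hypothesis $\Ext^1_R(\fkC, R) = 0$ forces $\Hom_{\overline R}\bigl(\fkC,\, \Ext^1_R(\overline R, R)\bigr) = 0$. Because invertibility of $\fkC$ makes $\Hom_{\overline R}(\fkC, -) \cong \fkC^{-1} \otimes_{\overline R} -$ an autoequivalence of $\overline R$-modules, this forces $\Ext^1_R(\overline R, R) = 0$, and Proposition \ref{rego1} delivers the conclusion that $R$ is Gorenstein.

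The main technical obstacle I anticipate is the residue-field extension needed to land in Setting \ref{setting}: one has to check that under the flat local base change of Lemma \ref{flat}, the conductor behaves correctly (i.e.\ $\fkC R_1$ is the conductor of $\overline{R_1}$ over $R_1$), so that the hypothesis $\Ext^1_{R_1}(\fkC R_1, R_1) = 0$ is still available; this compatibility is delicate in light of Question \ref{question}. Should this prove problematic, one can bypass Proposition \ref{rego1} entirely: the spectral-sequence argument above yields $\Ext^1_R(\overline R, R) = 0$ with no residue-field hypothesis, and applying $\Hom_R(-, R)$ to $0 \to R \to \overline R \to \overline R/R \to 0$ then gives the isomorphism $\Ext^1_R(\overline R/R, R) \cong R/\fkC$, after which local-duality comparisons against the canonical module $K_R$ (whose existence is guaranteed by Proposition \ref{existI}) force $K_R = R$, i.e.\ $R$ is Gorenstein.
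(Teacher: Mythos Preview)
Your argument is correct, and it ultimately reduces to the same application of Proposition~\ref{rego1} with $T=\overline R$ that the paper uses. However, you take a considerably longer route to the key input $\Ext^1_R(\overline R,R)=0$.

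The paper's proof is one line: since $\overline R$ is a finite product of discrete valuation rings (as you yourself observe), it is a principal ideal ring, so the nonzero $\overline R$--ideal $\fkC$ is principal and $\fkC\cong\overline R$ as $\overline R$--modules (hence as $R$--modules). Therefore $\Ext^1_R(\overline R,R)\cong\Ext^1_R(\fkC,R)=0$ immediately, and Proposition~\ref{rego1} applies. Your spectral-sequence argument is valid, but it is machinery deployed to deduce $\Ext^1_R(\overline R,R)=0$ from $\Ext^1_R(\fkC,R)=0$ via the invertibility of $\fkC$ over $\overline R$---and in a \emph{semilocal} principal ideal ring, ``invertible'' already means ``free of rank one'', so $\fkC\cong\overline R$ outright. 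Once you notice this, the spectral sequence, the collapse at $p\ge 2$, and the autoequivalence $\Hom_{\overline R}(\fkC,-)\cong\fkC^{-1}\otimes_{\overline R}-$ all become unnecessary.

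Your concern about the residue-field enlargement is legitimate in principle but not specific to your approach: the paper's proof invokes Proposition~\ref{rego1} under the standing Setting~\ref{setting} as well, so any such issue is shared. Your proposed fallback (``local-duality comparisons \ldots\ force $\rmK_R=R$'') is too vague to assess as written; but since the main argument already succeeds, you do not need it.
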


\begin{proof}
We have $\fkC \cong \overline{R}$ as $\overline{R}$--modules, since $\overline{R}$ is a principal ideal ring. Therefore by Proposition \ref{rego1} $R$ is a Gorenstein ring, because $\Ext_R^1(\overline{R},R) = \Ext_R^1(\fkC, R) = (0)$ and $\overline{R}$ is a finitely generated $R$--module.
\end{proof}

We now give a characterization of almost Gorenstein rings. The following result is exactly the same as the definition of almost Gorenstein ring that Barucci and Fr{\"o}berg \cite{BF} gave in the case where  the rings $R$ are analytically unramified.

\begin{thm}\label{algor}
$R$ is an almost Gorenstein ring if and only if $\fkm K \subseteq R$, i.e., $\m I = \m Q$. When this is the case, $\m S \subseteq R$.
\end{thm}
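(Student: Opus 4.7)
The plan is to pivot on the two formulas established earlier: $\e_1(I) = \ell_R(R/\fkc) + \ell_R(I/Q)$ from Lemma \ref{3.4}, and $\ell_R(I/Q) \geq \mu_R(I/Q) = \mu_R(I) - 1 = \mathrm{r}(R) - 1$ from Proposition \ref{canonineq}. A preliminary observation is that $\m K \subseteq R$ is equivalent to $\m I \subseteq Q$ (multiply through by the regular element $a$, since $I = aK$ and $Q = aR$), and by Proposition \ref{ineq}(1) both are in turn equivalent to $\m I = \m Q$ and to $\ell_R(I/Q) = \mu_R(I/Q)$.

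For the forward direction, the hypothesis $\e_1(I) \leq \mathrm{r}(R) = \mu_R(I/Q) + 1$ rewrites as
$$\ell_R(R/\fkc) + [\ell_R(I/Q) - \mu_R(I/Q)] \leq 1,$$
with both summands non-negative. The case $\ell_R(R/\fkc) = 0$ forces $\fkc = R$, hence $S = R$, hence $R$ is Gorenstein by Theorem \ref{gor} and $K = R$, so $\m K \subseteq R$ is trivial. The remaining case is $\ell_R(R/\fkc) = 1$ together with $\ell_R(I/Q) = \mu_R(I/Q)$, which directly gives $\m K \subseteq R$ by the preliminary observation.

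For the converse, assume $\m K \subseteq R$. By Proposition \ref{ineq}(1) we then already have $\ell_R(I/Q) = \mathrm{r}(R) - 1$, so the remaining task --- which I expect to be the main obstacle --- is to prove $\m S \subseteq R$ (this will simultaneously yield the final claim of the theorem) in order to force $\ell_R(R/\fkc) \leq 1$ and conclude $\e_1(I) \leq \mathrm{r}(R)$. The trick will be to note that $\m K$ is an ideal of $R$ with $\m \subseteq \m K \subseteq R$ (the left inclusion because $1 \in K$), so either $\m K = \m$ or $\m K = R$. The option $\m K = R$ would exhibit $\m$ as an invertible fractional ideal with inverse $K$; a finitely generated invertible ideal in a local ring being principal, this would force $R$ to be a DVR (hence Gorenstein), a case which is handled trivially as above. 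Otherwise $\m K = \m$, from which a straightforward induction gives $\m K^n = \m$ for every $n \geq 1$. Since $S$ is module-finite over $R$ and hence $S = R + K + K^2 + \cdots + K^N$ for some $N$, we obtain $\m S = \m + \sum_{n=1}^N \m K^n = \m \subseteq R$, completing the proof.
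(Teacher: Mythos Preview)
Your proof is correct. The forward direction is essentially the paper's argument, only reorganized around the decomposition $\e_1(I)=\ell_R(R/\fkc)+\ell_R(I/Q)$ rather than the case split $\ell_R(I/Q)=\e_1(I)$ versus $\ell_R(I/Q)<\e_1(I)$; these are the same dichotomy.

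The converse is where you genuinely diverge. The paper, assuming $\m K\subseteq R$ and $R$ not Gorenstein, sets $J=Q:_R\m$, invokes the Corso--Polini result $J^2=QJ$, observes $I\subseteq J\subseteq\overline{I}$, and concludes $\e_1(I)\le\e_1(J)=\rmr(R)$ via Corollary \ref{reme1}. You instead argue directly that $\m K\subseteq R$ forces $\m K=\m$ (or $R$ a DVR), whence $\m K^n=\m$ for all $n$ and $\m S=\m$, so $\ell_R(R/\fkc)\le 1$ and $\e_1(I)\le\rmr(R)$. Your route is more self-contained---it avoids the external citation \cite{CP}---and it has the pleasant feature of delivering the final clause $\m S\subseteq R$ as part of the converse rather than as a separate observation. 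In fact your induction $\m K^n=\m$ is exactly the argument the paper uses (phrased as $\m I^n=\m Q^n$) to obtain $\m S\subseteq R$ in the forward direction; you have simply relocated it to where it does double duty. The paper's route, on the other hand, is slightly quicker once \cite{CP} and Corollary \ref{reme1} are in hand, and it illustrates why the ideal $Q:_R\m$ is a natural companion to canonical ideals in this circle of ideas.
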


\begin{proof} Suppose that  $R$ is an almost Gorenstein ring. If $\ell_R(I/Q) = \rme_1(I)$, then $I^2 = QI$ by Proposition \ref{ineq} (2), so that $R$ is a Gorenstein ring by Theorem \ref{gor}. If $\ell_R(I/Q) < \rme_1(I)$, then we have $\rmr (R) - 1 = \ell_R(I/Q)$, because $\rmr (R) - 1 \le \ell_R(I/Q) < \rme_1(I) \le \rmr (R)$. Therefore $\m I = \m Q$ by Proposition \ref{ineq} (1). Hence $\m I^n = \m Q^n$ for all $n \in \Bbb Z$, so that $\m S \subseteq R$, because $S =\frac{I^n}{a^n}$ for $n \gg 0$. Conversely, suppose that $\m K \subseteq R$ and we will show $R$ is an almost Gorenstein ring. We may assume that  $R$ is not a Gorenstein ring. Let $J = Q :_R \m$. Then $J^2=QJ$ by \cite{CP}, since $R$ is not a regular local ring. Therefore $I \subseteq J \subseteq \overline{I}$, so that $\rme_1(I) \le \rme_1J) = \rmr (R)$ by Corollary \ref{reme1}. Hence $R$ is an almost Gorenstein ring.
\end{proof}

Since $\m K \subseteq \m \overline{R}$, we readily have the following.

\begin{cor}\label{3.11}
If $\m \overline{R} \subseteq R$, then $R$ is an almost Gorenstein ring.
\end{cor}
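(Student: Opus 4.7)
The plan is to invoke Theorem \ref{algor} essentially directly. By Setting \ref{setting}, the fractional canonical module $K$ satisfies $R \subseteq K \subseteq \overline{R}$. Therefore the hypothesis $\mathfrak{m}\overline{R} \subseteq R$ immediately yields
\[
\mathfrak{m}K \;\subseteq\; \mathfrak{m}\overline{R} \;\subseteq\; R,
\]
which is precisely the criterion $\mathfrak{m}K \subseteq R$ appearing in Theorem \ref{algor}. That theorem then concludes that $R$ is almost Gorenstein, finishing the argument.

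The only point requiring attention is that Theorem \ref{algor} is formulated inside Setting \ref{setting}, which presupposes the existence of a fractional canonical ideal $K$ sitting between $R$ and $\overline{R}$. By Corollary \ref{existK}, this is automatic once $\mathrm{Q}(\widehat{R})$ is Gorenstein and $R/\mathfrak{m}$ is infinite; to guarantee this, I would pass, via Lemma \ref{flat}, to a flat local extension $(R,\mathfrak{m}) \to (R_1,\mathfrak{m}_1)$ with $\mathfrak{m}_1 = \mathfrak{m}R_1$ and infinite residue field. The hypothesis $\mathfrak{m}\overline{R} \subseteq R$ is trivially inherited after tensoring: $R_1$ is flat over $R$ with $\mathfrak{m}_1 = \mathfrak{m}R_1$, and then $K_1 := R_1 \otimes_R K$ realizes a fractional canonical module for $R_1$ inside $\overline{R_1}$, so the displayed inclusion holds for $R_1$ as well. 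Applying Theorem \ref{algor} over $R_1$ gives that $R_1$ is almost Gorenstein, and Proposition \ref{ff} then descends the property back to $R$.

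The main conceptual obstacle — really the only one — is verifying that an $R$-submodule $K$ of $\overline{R}$ with $K \cong \rmK_R$ is actually available; but this is taken care of by the reduction above. Once $K$ exists in Setting \ref{setting}, the proof collapses to the one-line chain of inclusions. Hence no further calculation is needed, and the corollary follows.
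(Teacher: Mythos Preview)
Your core argument --- the chain $\mathfrak{m}K \subseteq \mathfrak{m}\overline{R} \subseteq R$ followed by Theorem \ref{algor} --- is exactly the paper's one-line proof. The extra paragraph on enlarging the residue field is unnecessary here (Setting \ref{setting} is explicitly in force throughout Section 3) and is in fact slightly circular: you write $K_1 := R_1 \otimes_R K$, but the whole purpose of passing to $R_1$ was to manufacture such a $K$ in the first place; since the reduction is not needed, this slip does not affect the correctness of your proof.
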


We explore an example.

\begin{ex} \label{exaln} Let $e \ge 3$ be an integer. We look at the local ring $$R=k[[t^{e}, t^{e+1}, \ldots, t^{2e-1}]]$$
in the formal power series ring $k[[t]]$ over a filed $k$. Then $\m^2 = t^e\m$ and $I = (t^{e}, t^{e+1}, \ldots, t^{2e-2})$ is a canonical ideal of $R$ with $(t^e)$ a reduction. 
Choose $K = \frac{I}{t^e} = \sum_{i=0}^{e-2}Rt^i$. Then $S = R[K]=k[[t]]$ and $\rme_1(I) = \ell_R(k[[t]]/R) = e-1$ by Lemma \ref{calce1}. Because $\overline{R} = k[[t]]$ and $R:k[[t]] = \m$, by Corollary \ref{3.11} $R$ is an almost Gorenstein ring. We have $\m : \m = k[[t]]$. Hence $S = \m : \m$ is a Gorenstein ring.
\end{ex}

Let $A$ be a commutative ring and assume that $A$ contains a field of characteristic $p > 0$. Let $F : A \to A, ~ F(a) = a^p$ be the Frobenius map. We denote $A$ by $B$ when we regard $A$ as an $A$--algebra via the ring homomorphism $F$. Then we say that $A$ is $F$--pure, if the homomorphism $F : A \to B$ is a split monomorphism of $A$--modules, that is there exists  an additive map $G : A \to A$ such that $G(a^pb) = aG(b)$ for all $a, b \in A$ and $G{\cdot}F = 1_A$. With this notation we have the following.

\begin{cor}
Suppose that $R$ is complete and contains a field of positive characteristic $p > 0$. Then $R$ is an almost Gorenstein ring, if $R$ is $F$--pure.
\end{cor}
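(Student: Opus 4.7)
The plan is to reduce to Corollary \ref{3.11} by showing that $\m\overline{R} \subseteq R$, and then to extract this inclusion from the Frobenius action on local cohomology. Note first that $F$-purity forces $R$ to be reduced: a nilpotent $a \in R$ has $F^e(a)=a^{p^e}=0$ for some $e$, and applying the splitting $G^e$ gives $a = G^e(a^{p^e}) = 0$. Combined with completeness and $\dim R = 1$, this makes $\overline{R}$ a finitely generated $R$-module with $\overline{R}/R$ of finite length, so that the conductor $\fkC = R:\overline{R}$ is $\m$-primary.

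Apply the local cohomology functor $\rmH^\bullet_\m(-)$ to the short exact sequence
\[
0 \to R \to \overline{R} \to \overline{R}/R \to 0.
\]
Because $R$ and $\overline{R}$ are both Cohen--Macaulay of depth one, $\rmH^0_\m(R) = \rmH^0_\m(\overline{R}) = 0$; and because $\overline{R}/R$ has finite length, $\rmH^0_\m(\overline{R}/R) = \overline{R}/R$ and $\rmH^1_\m(\overline{R}/R) = 0$. This yields a Frobenius-equivariant exact sequence
\[
0 \to \overline{R}/R \to \rmH^1_\m(R) \to \rmH^1_\m(\overline{R}) \to 0.
\]
The splitting identity $G \circ F = 1_R$ of $F$-purity is a relation of $R$-linear maps (with the appropriate Frobenius twist on the target of $F$), so functoriality of $\rmH^1_\m(-)$ gives $\rmH^1_\m(G) \circ \rmH^1_\m(F) = 1_{\rmH^1_\m(R)}$. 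In particular the Frobenius action on $\rmH^1_\m(R)$, and hence each iterate $\rmH^1_\m(F^e)$, is injective.

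Now let $m \in \m$ and $x \in \overline{R}$, and put $z = mx$. Since $\fkC$ is $\m$-primary we have $m^{p^e} \in \m^{p^e} \subseteq \fkC$ for all sufficiently large $e$, whence
\[
z^{p^e} \;=\; m^{p^e}x^{p^e} \;\in\; \fkC\cdot \overline{R} \;\subseteq\; R.
\]
Therefore the class $[z] \in \overline{R}/R$ is annihilated by the $e$-th Frobenius iterate. Viewing $[z]$ inside $\rmH^1_\m(R)$ via the inclusion above and invoking the injectivity of $\rmH^1_\m(F^e)$, we conclude $[z] = 0$, i.e., $mx \in R$. Thus $\m\overline{R} \subseteq R$, and Corollary \ref{3.11} finishes the proof. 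The one nontrivial link in the chain is the passage from $F$-purity to the injectivity of Frobenius on $\rmH^1_\m(R)$; this is handled cleanly by applying $\rmH^1_\m(-)$ to the identity $G \circ F = 1_R$, and once it is in hand the conductor estimate disposes of everything mechanically.
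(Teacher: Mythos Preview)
Your proof is correct and follows the same overall strategy as the paper: reduce to Corollary \ref{3.11} by showing $\m\overline{R}\subseteq R$, and obtain this from the combination of (i) injectivity of the Frobenius action on a suitable module containing $\overline{R}/R$ and (ii) the fact that high Frobenius iterates push $m^{p^e}$ into the conductor.

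The difference is in how injectivity is established. The paper stays entirely at the level of elements: it considers the $p$-th power map $f$ on $\rmQ(R)/R$ and checks directly that if $x=b/a$ satisfies $x^p\in R$, then $b^p\in a^pR$, whence $F$--purity forces $b\in aR$ and $x\in R$. Your route instead embeds $\overline{R}/R$ into $\rmH^1_\m(R)$ via the connecting map and deduces injectivity of Frobenius on $\rmH^1_\m(R)$ by applying the functor to $G\circ F=1_R$. Both are valid; the paper's argument is more elementary and self-contained, while yours is the standard $F$-singularity viewpoint and makes the role of $F$-purity transparently functorial. Note that your local-cohomology detour is not strictly needed: once you observe that Frobenius on $\overline{R}/R$ itself is injective (which follows immediately from the injectivity on $\rmH^1_\m(R)$ plus the equivariant embedding, or from the paper's elementwise check), the rest of your argument goes through without ever mentioning $\rmH^1_\m$.
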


\begin{proof}
Let $F : R \to R, ~F(a) = a^p$ and let $f : \rmQ (R)/R \to \rmQ (R)/R$, $f(\overline{a}) = \overline{a^p}$, where $\overline{x}$ denotes for each $x \in R$  the image of $x$ in $\rmQ (R)/R$. Then $\overline{R}/R$ is stable under the action of $f$, i.e., $f(\overline{R}/R) \subseteq \overline{R}/R$. Notice that the  map $f$ is injective, since $R$ is $F$--pure. In fact, let $x = \frac{b}{a} \in \rmQ (R)$ with $a, b \in R$ such that $a$ is a non-zerodivisor in $R$ and assume that $x^p = \frac{b^p}{a^p} \in R$. Then, since $b^p \in  a^pR$ and since $R$ is $F$--pure, we get $b \in aR$, so that $x \in R$. Therefore, since $\m ^{\ell}{\cdot}(\overline{R}/R) = (0)$ for some $\ell \gg 0$ (remember  that $\widehat{R} = R$ and that $R$ is a reduced ring, since $R$ is $F$--pure), we have $f^{\ell}(\m{\cdot}(\overline{R}/R)) = (0)$ for all $\ell \gg 0$, so that $\m{\cdot}(\overline{R}/R) = (0)$, because $f$ is an injective map. Hence $R$ is an almost Gorenstein ring by Corollary \ref{3.11}.
\end{proof}

We need the following. Assertions (2) and (3) are fairly well-known but let us include brief proofs for the sake of completeness.

\begin{lem}\label{DVR} The following assertions hold true.
\begin{enumerate}[{\rm (1)}]
\item $\ell_R(I^2/QI) = \ell_R(R/(R:K)) \le \ell_R(S/K)$.
\item $\ell_R((R:\m)/R) = \rmr (R)$.
\item $R$ is a discrete valuation ring, if $\m : \m \subsetneq R : \m$. 
\end{enumerate}
\end{lem}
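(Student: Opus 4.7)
The plan is to prove each of the three assertions separately, using the canonical-module duality $\Hom_R(-, K) = K:(-)$ together with standard hom-colon identifications for fractional ideals inside $\rmQ(R)$.

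For (1), I would identify $I^2/QI \cong K^2/K$ via multiplication by $a^{-2}$ (since $I = aK$ and $Q = (a)$), then apply $\Hom_R(-, K)$ to the short exact sequence $0 \to K \to K^2 \to K^2/K \to 0$. Using $K:K = R$ together with $K:K^2 = (K:K):K = R:K$, and observing that $\Hom_R(K^2/K, K) = 0$ (since $K^2/K$ has finite length while $K$ has positive depth) as well as $\Ext^1_R(K^2, K) = 0$ (since $K^2 \subseteq S \subseteq \overline R$ is a maximal Cohen--Macaulay $R$-module), one obtains the short exact sequence
$$0 \to R:K \to R \to \Ext^1_R(K^2/K, K) \to 0.$$
Local duality in dimension one then yields $\ell_R(\Ext^1_R(K^2/K, K)) = \ell_R(K^2/K)$, giving $\ell_R(I^2/QI) = \ell_R(R/(R:K))$. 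For the inequality, since $K \subseteq S$ one has $\fkc = R:S \subseteq R:K$, and so $\ell_R(R/(R:K)) \le \ell_R(R/\fkc) = \ell_R(S/K)$ by Lemma \ref{3.4}.

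For (2), I would apply $\Hom_R(-, R)$ to $0 \to \m \to R \to R/\m \to 0$. Since $\depth R = 1$, one has $\Hom_R(R/\m, R) = 0$, while $\Hom_R(\m, R) = R:\m$ because $\m$ contains a non-zerodivisor and every $R$-linear map $\m \to R$ extends uniquely to multiplication by an element of $\rmQ(R)$. The resulting short exact sequence $0 \to R \to R:\m \to \Ext^1_R(R/\m, R) \to 0$ immediately gives $\ell_R((R:\m)/R) = \ell_R(\Ext^1_R(R/\m, R)) = \rmr(R)$, by the definition of Cohen--Macaulay type recalled in the introduction.

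For (3), if $\m:\m \subsetneq R:\m$, I would pick $x \in (R:\m) \setminus (\m:\m)$; then $x\m$ is an ideal of $R$ not contained in $\m$, so $x\m = R$, and multiplication by $x$ therefore gives an $R$-module isomorphism $\m \overset{\sim}{\to} R$. This makes $\m$ principal and generated by a non-zerodivisor, and a one-dimensional Cohen--Macaulay local ring with principal maximal ideal is a DVR. I expect the main obstacle to be assertion (1), where correctly identifying each term in the $\Ext$ long exact sequence and verifying the vanishing of $\Ext^1_R(K^2, K)$ through the MCM property of $K^2$ are the most delicate points; (2) and (3) reduce to elementary arguments.
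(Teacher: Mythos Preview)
Your proposal is correct and follows essentially the same route as the paper. Parts (2) and (3) are identical to the paper's arguments; for (1) the paper also identifies $I^2/QI \cong K^2/K$ and invokes canonical duality, but does so in one line (writing $\ell_R(K^2/K) = \ell_R((K:K)/(K:K^2))$ directly) rather than unpacking the long exact sequence as you do, and it obtains the inequality simply from the containment $K \subseteq K^2 \subseteq S$ rather than via $R:S \subseteq R:K$ and Lemma~\ref{3.4}.
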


\begin{proof}
(1) Notice that $K \subseteq KK \subseteq S$. Then, since $ (KK)/K\overset{a^2}{\cong} I^2/QI$ as $R$--modules,  we get $\ell_R(I^2/QI) = \ell_R((KK)/K)  \le \ell_R(S/K)$, while $\ell_R((KK)/K) = \ell_R((K:K)/(K:KK))$ by the canonical duality. Hence the result follows, because $$R = K : K \ \ \text{and}\ \ K : KK = (K: K):K = R : K.$$

(2) Taking the $R$--dual of the exact sequence
$0 \to \m \to R \to R/\m \to 0,$
we get the exact sequence $$0 \to R \to R : \m \to \Ext_R^1(R/\m, R) \to 0.$$ Hence $$\ell_R((R:\m)/R) = \ell_R(\Ext_R^1(R/\m, R)) = \rmr (R).$$

(3) Choose $f \in R : \m \setminus \m : \m$. Then $\m f \subseteq R$ but $\m f \not\subseteq \m$. Hence $\m f = R$, so that $\m \cong R$. Thus $R$ is a discrete valuation ring.
\end{proof}

As a consequence of Theorems \ref{gor} and \ref{algor}, we get the following characterization of almost Gorenstein rings which are not Gorenstein. Condition (3) in Theorem \ref{algorcor} is called Sally's equality. $\m$--primary ideals  satisfying Sally's equality are known to enjoy very nice properties (\cite{GNO, S2, V}), where the ideals are not necessarily canonical ideals and the rings  need not  be of dimension one. For instance, the fact that condition (3) in Theorem \ref{algorcor} implies  both the condition (5) and assertion (a) is due to \cite{S2}.

\begin{thm} \label{algorcor}
The following conditions are equivalent.
\begin{enumerate}[{\rm (1)}]
\item $R$ is an almost Gorenstein ring but not a Gorenstein ring.
\item $\rme_1(I) = \rmr(R)$.
\item $\rme_1(I) = \rme_0(I) - \ell_R(R/I) + 1$.
\item $\ell_R(S/K) = 1$, i.e., $S=K:\m$.
\item $\ell_R(I^2/QI) = 1$.
\item $\m : \m = S$ and $R$ is not a discrete valuation ring.
\end{enumerate}
When this is the case, we have the following.
\begin{enumerate}[{\rm (a)}]
\item $\mathrm{red}_Q(I) = 2$.
\item $\ell_R(R/I^{n+1}) = (\rmr(R) + \ell_R(R/I) -1) \binom{n+1}{1} - \rmr(R)$ for all $n \ge 1$.
\item Let $G = \bigoplus_{n\ge 0} I^n / I^{n+1}$ be the associated graded ring of $I$ and $M= \m G + G_+$ the graded maximal ideal of $G$. Then $G$ is a Buchsbaum ring with $\bbI(G) = 1$, where $\bbI(G)$ stands for the Buchsbaum invariant of $G$.
\end{enumerate}
\end{thm}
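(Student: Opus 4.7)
The plan hinges on the two identities recorded earlier:
\[
\rme_1(I) \;=\; \ell_R(S/K) + \ell_R(I/Q), \qquad \ell_R(I/Q) \;\geq\; \mu_R(I) - 1 \;=\; \rmr(R) - 1,
\]
from Lemma~\ref{3.4} and Proposition~\ref{canonineq}, with equality in the second iff $\m I = \m Q$. Combined with Theorem~\ref{gor} (which says $R$ is Gorenstein iff $\rme_1(I) = \rmr(R) - 1$ iff $\ell_R(S/K) = 0$), these yield $(1) \Leftrightarrow (2)$ at once: being almost Gorenstein means $\rme_1(I) \leq \rmr(R)$, while non-Gorenstein gives $\rme_1(I) \geq \rmr(R)$, so (1) is precisely $\rme_1(I) = \rmr(R)$.

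For $(2) \Leftrightarrow (3) \Leftrightarrow (4)$, substituting $\rme_0(I) - \ell_R(R/I) = \ell_R(I/Q)$ rewrites (3) as $\rme_1(I) = \ell_R(I/Q) + 1$, which via the first identity is exactly $\ell_R(S/K) = 1$, i.e.\ (4); the implication $(2) \Rightarrow (4)$ is then forced by the sum decomposition together with $\ell_R(S/K) \geq 1$ in the non-Gorenstein case. The crucial step $(4) \Rightarrow (1)$ proceeds as follows: since (4) gives $S \neq K$ (non-Gorenstein), Corollary~\ref{notgor}(1) yields $K \subseteq K:\m \subseteq S$, and $\ell_R((K:\m)/K) = 1$ from \cite[Satz 3.3]{HK} combined with $\ell_R(S/K) = 1$ forces $S = K:\m$. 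Then $\m S \subseteq K$, so $\m \subseteq K:S = \fkc$ by Lemma~\ref{3.4}(2), whence $\m K \subseteq \m S \subseteq \fkc S \subseteq R$: this is the almost Gorenstein condition by Theorem~\ref{algor}.

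For $(4) \Leftrightarrow (5)$, Lemma~\ref{DVR}(1) gives $\ell_R(I^2/QI) \leq \ell_R(S/K)$, so (4) implies $\ell_R(I^2/QI) \leq 1$; the reverse inequality $\ell_R(I^2/QI) \geq 1$ comes from $I^2 \neq QI$ (Theorem~\ref{gor}, non-Gorensteinness). For the converse I would invoke Sally's classical theorem (\cite{S2}): $\ell_R(I^2/QI) = 1$ forces $I^3 = QI^2$, hence $\mathrm{red}_Q(I) \leq 2$, so $\rme_1(I) = \ell_R(I/Q) + \ell_R(I^2/QI) = \ell_R(I/Q) + 1$, which is (3). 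For $(4) \Leftrightarrow (6)$: assuming (4) (hence (1)), $R$ is not a DVR, and almost Gorenstein gives $\m \subseteq \m K \subseteq R$; one cannot have $\m K = R$, for otherwise $1 \in \m K \subseteq \m S$ would force $\m S = S$, violating Nakayama applied to the finitely generated $R$-module $S$. Thus $\m K = \m$, giving $K \subseteq \m:\m$ and $S = R[K] \subseteq \m:\m$. Lemma~\ref{DVR}(3) then gives $\m:\m = R:\m$ and Lemma~\ref{DVR}(2) gives $\ell_R((\m:\m)/R) = \rmr(R) = \rme_1(I) = \ell_R(S/R)$, so the inclusion $S \subseteq \m:\m$ is an equality. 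The reverse implication runs the same length identity backwards.

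The consequences follow directly: (a) $K \subsetneq K^2 \subseteq S$ together with $\ell_R(S/K) = 1$ forces $K^2 = S$, so $\mathrm{red}_Q(I) = 2$; (b) substitute $\rme_0(I) = \ell_R(R/I) + \rmr(R) - 1$ and $\rme_1(I) = \rmr(R)$ into the Hilbert polynomial, valid for $n \geq \mathrm{red}_Q(I) - 1 = 1$; (c) the Buchsbaum property of $G$ with $\bbI(G) = 1$ is extracted from the filtration data $\m I = \m Q$, $\ell_R(I^2/QI) = 1$, $I^3 = QI^2$ by a direct computation of $\mathrm{H}^0_M(G)$. The main obstacle is the implication $(5) \Rightarrow (4)$: the formal identities together with Lemma~\ref{DVR}(1) yield only $\ell_R(K^2/K) \leq \ell_R(S/K)$, and although canonical duality instantly identifies $K^2 = K:\m$, this does not suffice to conclude $K:\m = S$; Sally's theorem providing $I^3 = QI^2$ from $\ell_R(I^2/QI) = 1$ is the essential non-formal input that pins down the reduction number.
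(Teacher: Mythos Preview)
Your argument runs parallel to the paper's except at one point, where there is a genuine gap.

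The gap is your route $(5)\Rightarrow(3)$. You claim that Sally's theorem gives $I^3=QI^2$ from $\ell_R(I^2/QI)=1$, but this implication fails for arbitrary $\m$-primary ideals even in dimension one: with $R=k[[t^4,t^5,t^6,t^7]]$, $I=(t^4,t^5)$, $Q=(t^4)$ one checks $\ell_R(I^2/QI)=1$ yet $\mathrm{red}_Q(I)=3$. The results in \cite{S2} go the other way, deriving $\ell_R(I^2/QI)=1$ and $\mathrm{red}_Q(I)\le 2$ \emph{from} condition~(3). What makes the implication work here is that $I$ is canonical, and the paper exploits exactly this. You already observe that $\ell_R(K^2/K)=1$ forces $K^2=K:\m$, hence $\m K^2\subseteq K$; now simply invoke the canonical identity $K:K=R$ (which is what powers Lemma~\ref{DVR}(1)) to obtain $\m K\subseteq K:K=R$, and this is precisely condition~(1) by Theorem~\ref{algor}. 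Equivalently, Lemma~\ref{DVR}(1) records not just the inequality $\ell_R(I^2/QI)\le\ell_R(S/K)$ you quote, but the \emph{equality} $\ell_R(I^2/QI)=\ell_R(R/(R:K))$, so $(5)$ yields $R:K=\m$ immediately. No appeal to \cite{S2} is needed, and none would work without the canonical hypothesis.

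On the other hand, your proof of (a) is cleaner than the paper's: you read off $K^2=S$ from $K\subsetneq K^2\subseteq S$ and $\ell_R(S/K)=1$, whence $K^3=K\cdot S=S=K^2$ and $\mathrm{red}_Q(I)=2$. The paper instead invokes \cite[Proposition~2.6]{GNO}, applied to the inclusion $I\subseteq Q:_R\m$.
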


\begin{proof} 
(1) $\Leftrightarrow$ (2)  This follows from the fact that $\rmr (R) - 1 \le \rme_1(I)$ (Proposition \ref{canonineq}). Remember that by Theorem \ref{gor}  $R$ is a Gorenstein ring if and only if $\rme_1(I) = \rmr (R) - 1$ and that $R$ is an almost Gorenstein ring if and only if $\rme_1(I) \le \rmr(R)$.   

(1) $\Leftrightarrow$ (3) $\Leftrightarrow$ (4)
We have by Lemma \ref{3.4} and Proposition \ref{canonineq} $$\ell_R(S/K) = \ell_R(R/\fkc) = \rme_1(I) - \ell_R(I/Q) = \rme_1(I) - \rme_0(I)+\ell_R(R/I).$$ Therefore, condition (3) is equivalent to saying that $\ell_R(S/K) = 1$, i.e., $\ell_R(R/\fkc) = 1$. The last condition says that $\fkc = \m$, i.e., $\m S \subseteq R$ but $S \ne R$, or equivalently, $R$ is an almost Gorenstein ring but not a Gorenstein ring (Theorems \ref{gor} and \ref{algor}). Remember that  $\ell_R((K:\m)/K = 1$ (\cite[Satz 3.3]{HK}) and that  by Corollary \ref{notgor} (1) $K : \m \subseteq S$, if $R$ is not a Gorenstein ring. Then, because $R$ is not a Gorenstein ring if $S \ne K$ (Theorem \ref{gor}), we get that  $\ell_R(S/K) = 1$ if and only if $S = K : \m$.

(4) $\Rightarrow$ (5)
By Theorem \ref{gor} $R$ is not a Gorenstein ring, so that $I^2 \ne QI$  and hence   $\ell_R(I^2/QI) = 1$, because  $\ell_R(I^2/QI)\le \ell_R(S/K)$ by Lemma \ref{DVR} (1).

(5) $\Rightarrow$ (1) By Lemma \ref{DVR} (1) we have $R:K = \m$. Therefore $\m K \subseteq R$ and $K \ne R$, so that $R$ is an almost Gorenstein ring but not a Gorenstein ring.

(1) $\Rightarrow$ (6)
Suppose that $R$ is an almost Gorenstein ring but not a Gorenstein ring.
Then $R$ is not a discrete valuation ring, $S \ne R$, and $\m S \subseteq R$. Hence $$R \subsetneq S \subseteq R:\m = \m:\m$$ by Lemma \ref{DVR} (3). Since  $\ell_R(S/R) = \rme_1(I) = \rmr(R) = \ell_R((R:\m)/R)$ (thanks to  Lemma \ref{calce1}, the equivalence of conditions (1) and (2), and Lemma \ref{DVR} (2)), we get $S=\m : \m$.

(6) $\Rightarrow $ (2) By Lemma \ref{DVR} (3) we have $S = \m : \m = R : \m$. Therefore $\rme_1(I) = \ell_R(S/R) = \ell_R((R:\m)/R) = \rmr(R)$ by Lemma \ref{calce1} and Lemma \ref{DVR} (2).   

Let us prove the last assertions. We put $J = Q:_R\m$. Hence $I \subseteq J$, because $R$ is an almost Gorenstein ring.

(a) We have $\ell_R(I/Q) = \rmr(R) -1$ by Proposition \ref{ineq} (1). Hence $\ell_R(J / I) = 1$, because $\ell_R(J/Q) = \rmr (R)$. Therefore $I^3=QI^2$ by \cite[Proposition 2.6]{GNO}, since 
$J^2=QJ$ by \cite{CP}. Thus $\mathrm{red}_Q(I) = 2$ by Theorem \ref{gor}, because $R$ is not a Gorenstein ring.  

(b) This is clear.

(c) Let $[\rmH_M^0(G)]_0$ denote the homogeneous component of the graded local cohomology module $\rmH_M^0(G)$ with degree $0$. Then, thanks to the fact $I^3 = aI^2$, a direct computation shows $$\rmH_M^0(G) = [\rmH_M^0(G)]_0 \cong (I^2:_R a) / I.$$ We want to see that $J=I^2:_R a$. Let $x \in I^2:_R a$. Then $ax \in I^2 \subseteq J^2 = aJ$. Hence $x \in J$, so that we have $$I \subseteq I^2:_R a \subseteq J = (a):_R\m.$$

\begin{claim}
$I \ne I^2:_Ra$.
\end{claim}

\begin{proof}[Proof of Claim 1]
Suppose that $I = I^2:_Ra$. Then, since $I^2 \subseteq \m I \subseteq (a)$, we have $$I^2 = a(I^2:_Ra) = aI.$$ Hence $R$ is a Gorenstein ring by Theorem \ref{gor}, which is impossible.
\end{proof}
Thanks to Claim 1, we get $I \subsetneq I^2:_Ra$, so that $J = I^2:_R a$, since $\ell_R(J/I) = 1$. Thus  $\rmH_M^0(G) = [\rmH_M^0(G)]_0 \cong J/I \cong R/\m$. Hence $G$ is a Buchsbaum ring with $\bbI(G) = 1$.
\end{proof}

The following result is a consequence of Theorem \ref{algorcor}.

\begin{cor}\label{3.12} The following assertions hold true.
\begin{enumerate}[{\rm (1)}]
\item $\rme_1(I) \ne 1$.
\item If $\rme_1(I) \le 3$, then $R$ is an almost Gorenstein ring.
\item $\rme_1(I) \ne \rmr(R) +1$.
\end{enumerate}
\end{cor}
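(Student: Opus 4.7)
The plan is to use Proposition \ref{canonineq}, which supplies the identity
$$\rme_1(I) = \ell_R(R/\fkc) + \ell_R(I/Q)$$
together with $\ell_R(I/Q) \ge \rmr(R)-1$, as the main engine; the three parts then follow by small case analyses that invoke Theorems \ref{gor}, \ref{algor}, and \ref{algorcor}. I will first dispose of (3), then (1), and finally deduce (2) as a direct consequence.

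For (3), I would argue by contradiction. Assume $\rme_1(I) = \rmr(R)+1$. The displayed identity combined with the lower bound on $\ell_R(I/Q)$ immediately yields $\ell_R(R/\fkc) \le 2$, and I would analyse the three possibilities. The value $\ell_R(R/\fkc) = 0$ forces $\fkc = R$, hence $S = R$, hence $R$ Gorenstein by Theorem \ref{gor}, which contradicts $\rme_1(I) \ge 1$. The value $\ell_R(R/\fkc) = 1$ means $\ell_R(S/K) = 1$ by Lemma \ref{3.4}, so Theorem \ref{algorcor} gives $\rme_1(I) = \rmr(R)$, a direct contradiction. The value $\ell_R(R/\fkc) = 2$ forces $\ell_R(I/Q) = \rmr(R)-1$, and since $\mu_R(I/Q) = \mu_R(I) - 1 = \rmr(R)-1$ as well, the equality $\mu_R(I/Q) = \ell_R(I/Q)$ triggers Proposition \ref{ineq}(1) to give $\m I = \m Q$; Theorem \ref{algor} then declares $R$ almost Gorenstein, so $\rme_1(I) \le \rmr(R)$, again a contradiction.

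For (1), if $\rme_1(I) = 1$ then $\rmr(R) \le \rme_1(I) + 1 = 2$ restricts $\rmr(R)$ to $\{1,2\}$; in the first case $R$ is Gorenstein so $\rme_1(I) = 0$, and in the second the equality $\rme_1(I) = \rmr(R)-1$ again forces $R$ Gorenstein by Theorem \ref{gor}, contradicting $\rmr(R)=2$. For (2), given $\rme_1(I) \le 3$, the value $1$ is eliminated by (1), the value $0$ makes $R$ Gorenstein, the value $2$ combined with $\rmr(R) \ne 1$ (else $\rme_1(I)=0$) yields $\rmr(R) \ge 2$, and the value $3$ combined with $\rmr(R) \ne 1$ and $\rmr(R) \ne 2$ (the latter by part (3) just proved) yields $\rmr(R) \ge 3$; in every case $\rme_1(I) \le \rmr(R)$, so $R$ is almost Gorenstein by Definition \ref{almostGor}. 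The main obstacle I anticipate is the $\ell_R(R/\fkc) = 2$ branch of (3), where one must recognise that the extremal value $\ell_R(I/Q) = \rmr(R)-1$ coincides with $\mu_R(I/Q)$ so as to activate Proposition \ref{ineq}(1) and conclude $\m I = \m Q$; the other cases are essentially bookkeeping against the characterizations of (almost) Gorensteinness.
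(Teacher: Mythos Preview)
Your proof is correct. All three parts go through exactly as you outline; in particular, the branch $\ell_R(R/\fkc)=2$ in part (3) is handled correctly, since $\ell_R(I/Q)=\rmr(R)-1=\mu_R(I)-1=\mu_R(I/Q)$ indeed activates Proposition~\ref{ineq}(1) and hence Theorem~\ref{algor}.

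Your route differs from the paper's in two respects. First, you reorder the argument: you prove (3) before (2) and then dispatch the case $\rme_1(I)=3$ of (2) in one line by invoking (3) to rule out $\rmr(R)=2$. The paper instead proves (2) independently of (3), handling $\rme_1(I)=3$ by assuming $R$ is not almost Gorenstein, squeezing $\rmr(R)-1<\ell_R(I/Q)<\rme_1(I)=3$ to force $\ell_R(I/Q)=2$, and then recognising Sally's equality $\rme_1(I)=\rme_0(I)-\ell_R(R/I)+1$ to contradict via Theorem~\ref{algorcor}(3). Second, for (3) the paper argues more directly: assuming $\rme_1(I)=\rmr(R)+1$, the ring is not almost Gorenstein, so both inequalities in $\rmr(R)-1\le\ell_R(I/Q)\le\rme_1(I)$ become strict, pinning $\ell_R(I/Q)=\rmr(R)$ and again yielding Sally's equality; you instead case-split on $\ell_R(R/\fkc)\in\{0,1,2\}$ and appeal to Theorem~\ref{algorcor}(4) and Theorem~\ref{algor} separately. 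Your organisation is arguably cleaner for (2), while the paper's argument for (3) is shorter; both hinge on the same equivalences in Theorem~\ref{algorcor}.
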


\begin{proof} (1)
Suppose $\rme_1(I) = 1$. Then, since $R$ is not a Gorenstein ring, we get $$0 \le \rmr (R) -1 \le \ell_R(I/Q) < \rme_1(I) = 1,$$ which yields $\rmr (R) = 1$ and hence $R$ is a Gorenstein ring. This is absurd.

(2) By (1) we may assume  $\rme_1(I) \ge 2$. If $\rme_1(I) = 2$, we get $\rmr (R) \ge 2$ and $$0 < \rmr (R) -1 \le \ell_R(I/Q) < \rme_1(I) = 2.$$ Hence $\rmr (R) -1 = \ell_R(I/Q) = 1$, so that $R$ is an almost Gorenstein ring. Suppose $\rme_1(I) = 3$. If $R$ is not an almost Gorenstein ring, we get $$0 < \rmr (R) -1 < \ell_R(I/Q) < \rme_1(I) = 3.$$ Therefore $\rme_1(I) = \ell_R(I/Q) + 1 = \rme_0(I) - \ell_R(R/I) + 1$. Hence by Theorem \ref{algorcor} $R$ is an almost Gorenstein ring. This is absurd.

(3) Suppose that ${\rme}_1(I) = \rmr(R) + 1$. Then $R$ is not an almost Gorenstein ring and hence $\rmr(R)-1<\ell_R(I/Q) < \rme_1(I) = \rmr (R) + 1$. Therefore $\rme_1(I) =\rme_0(I) - \ell_R(R/I) +  1$, so that  by Theorem \ref{algorcor} $R$ is an almost Gorenstein ring, which is absurd.
\end{proof}

\begin{rem} Assertion (2) in Corollary  \ref{3.12} is no longer true, if $\rme_1(I) = 4$. For example, we look at Example \ref{1.5} (3). Then the ideal $I$ of Example \ref{1.5} (3) is a canonical ideal of $R$ with $\rme_1(I) = 4$ (see Example \ref{4.3} (1)). The ring $R$ is not an almost Gorenstein ring, because $\mu_R(I) -1 = 1 < \ell_R(I/Q) = 2$.
\end{rem}


\section{Almost Gorenstein property of $3$--generated numerical semigroup rings}
Let $k$ be a field. In this section we explore semigroup rings $k[H]$ of $3$--generated numerical semigroups $H$.

Let $a_1, a_2, a_3 \in \mathbb Z$ and assume that $0<a_1< a_2< a_3$ with $\GCD (a_1, a_2, a_3) = 1$. Let $H$ be the numerical semigroup generated by $a_1, a_2, a_3$, that is $$H = \left<a_1, a_2, a_3\right> := \{c_1a_1 + c_2a_2 +c_3a_3 \mid 0 \le c_i \in \bbZ \}.$$ Let $k[t]$ denote the polynomial ring and put $T = k[t^{a_1}, t^{a_2}, t^{a_3}]$. Then $T$ is a one-dimensional graded ring with $\overline{T} = k[t]$, where $\overline{T}$ stands for the normalization of $T$. Let $M = (t^{a_1}, t^{a_2}, t^{a_3})$ denote the maximal ideal of $T$ generated by ${t^{a_i}}'s$. In this section we explore the local ring $R=T_M$ and eventually answer the question of when $\widehat{R} = k[[t^{a_1}, t^{a_2}, t^{a_3}]]$ is an almost Gorenstein ring. Throughout, we assume that $T$ is not a Gorenstein ring.

Let $U = k[X, Y, Z]$ be the polynomial ring and regard $U$ as a $\mathbb Z$--graded ring with $U_0=k$, $\operatorname{deg} X=a_1$, $\operatorname{deg}Y =a_2$, and $\operatorname{deg}Z =a_3$. Let $$\varphi:U \to T$$ be the $k$--algebra map defined by $\varphi(X)=t^{a_1}$, $\varphi(Y)=t^{a_2}$, and $\varphi(Z)=t^{a_3}$. Hence $\operatorname{Im} \varphi = T$. Let $$c=\rmc(H) := \min \{n \in \Bbb Z \mid \ell \in H,~\text{if} ~\ell \in \Bbb Z~~\text{and}~~\ell \ge n \}$$
be the conductor of $H$ and $f= \rmf(H):=c-1$ the Frobenius number of $H$. Hence $c \ge 2$, because $T$ is not a Gorenstein ring.  We put $J = \operatorname{\Ker}\varphi$. Then because $T$ is not a Gorenstein ring, thanks to \cite{H}, the ideal $J$ is generated by the maximal minors of the matrix  
$$\begin{pmatrix}   
X^{\alpha} & Y^{\beta} & Z^{\gamma} \\
Y^{\beta'} & Z^{\gamma'} & X^{\alpha'}
\end{pmatrix}$$
where $0< \alpha, \beta, \gamma, \alpha', \beta', \gamma'\in \mathbb Z$. 
Let us  call this matrix the Herzog matrix of $H$.

Let $\Delta_1 = Z^{\gamma+\gamma'}-X^{\alpha'}Y^{\beta}$, $\Delta_2=X^{\alpha+\alpha'}-Y^{\beta'}Z^{\gamma}$, and $\Delta_3=Y^{\beta+\beta'}-X^{\alpha}Z^{\gamma'}$. Then $J = (\Delta_1,\Delta_2, \Delta_3)$ and thanks to the theorem of Hilbert--Burch \cite[Theorem 20.15]{E}, the graded ring $U/J$ possesses a graded minimal free resolution of the  form
$$0\longrightarrow \begin{matrix} U(-\ell)\\ \oplus\\ U(-n)\end{matrix} \overset{\left[ \begin{smallmatrix}
X^{\alpha} & Y^{\beta'}\\
Y^{\beta} & Z^{\gamma'}\\
Z^{\gamma} & X^{\alpha'}
\end{smallmatrix} \right]}{\longrightarrow  } \begin{matrix} U(-d_1)\\ \oplus\\ U(-d_2)\\ \oplus\\ U(-d_3)\end{matrix} \overset{\left[\Delta_1~\Delta_2~\Delta_3\right]}{\longrightarrow } U\overset{\varepsilon }{\longrightarrow } U/J\longrightarrow 0,$$
where $d_1 = \deg\Delta_1 = a_3(\gamma + \gamma')$, $d_2 = \deg\Delta_2 = a_1(\alpha + \alpha')$, $d_3 = \deg\Delta_3 = a_2(\beta + \beta')$, $\ell= a_1\alpha + d_1 = a_2\beta + d_2 = a_3\gamma + d_3$, and $n = a_1\alpha' + d_3 = a_2\beta' + d_1 = a_3\gamma' + d_2$. Therefore $$n - \ell = a_2\beta' -a_1\alpha = a_3\gamma' - a_2\beta = a_1\alpha' -a_3\gamma.$$

Let $\rmK_U = U(-d)$ denote the graded canonical module of $U$ where $d=a_1 + a_2 + a_3$. Then, taking $\rmK_U$--dual of the above resolution, we get the  presentation
\begin{equation*}
\begin{matrix} U(d_1-d)\\ \oplus\\ U(d_2-d)\\ \oplus\\ U(d_3-d)\end{matrix}\overset{\left[ \begin{smallmatrix} X^{\alpha} & Y^{\beta} & Z^{\gamma} \\
Y^{\beta'} & Z^{\gamma'} & X^{\alpha'}
\end{smallmatrix} \right]}{\longrightarrow }\begin{matrix} U(\ell-d)\\ \oplus\\ U(n-d)\end{matrix}\overset{\varepsilon }{\longrightarrow }\rmK_{U/J}\longrightarrow 0 \tag{$\sharp$}
\end{equation*}
of the graded canonical module $\rmK_{U/J} = \Ext_U^2(U/J,\rmK_U)$ of $U/J$.

Let $V=-(\mathbb Z\setminus H) ~(=\{-v \mid v \in \Bbb Z ~\text{but}~v \not\in H\})$. Hence $-f = \operatorname{min} V$. We put $$L = \sum_{v\in V}kt^v$$ in $k[t,t^{-1}]$. Then $L$ is a graded $T$--submodule of $k[t,t^{-1}]$ and $\rmK_T \cong L$ as graded $T$--modules, because  $L$ is the graded $k$--dual of the graded local cohomology module $$\rmH_M^1(T) = k[t,t^{-1}]/T$$ of $T$ with respect to $M$ (\cite[Proposition (2.1.6)]{GW}). We put  $$K := t^f L \cong L(-f).$$ Then $$T\subseteq K\subseteq k[t]=\overline{T} \ \ \text{and} \ \ K/T \cong (L/Tt^{-f})(-f)$$ as graded $T$--modules.

Because $T = U/J$, we are able to combine the data on $K=t^fL$ and $\rmK_{U/J}$. Since $K \cong \rmK_{U/J}(-f)$ and  $$\rmK_{U/J} = U\varepsilon\left(\binom{1}{0}\right) + U\varepsilon\left(\binom{0}{1}\right)$$ with $\deg \left(\varepsilon\left(\binom{1}{0}\right)\right) = d - \ell$ and $\deg\left( \varepsilon\left(\binom{0}{1}\right)\right) = d - n$ (see presentation $(\sharp)$), we get 
$$K = T\zeta + T\eta $$
 for some $\zeta \in K_{d-\ell+f}$ and $\eta \in K_{d - n+f}$, where $K_i$ denotes, for each $i \in \Bbb Z$, the homogeneous component of $K$ with degree $i$.  Hence $n \ne \ell$, because $\mu_T(K)=2$ and $\operatorname{dim}_kK_i \le 1$ for all $i \in \Bbb Z$.

Remember now that $K_0 \ne (0)$ but $K_i = (0)$ for all $i < 0$. Therefore, if $\ell > n$, then $d -n+f>d - \ell +f= 0$. Consequently we get  $f = \ell -d$ and hence $$K = T + Tt^{\ell-n} \ \ \text{and}\ \ T[K] = T[t^{\ell-n}].$$ We similarly have $$K = T + Tt^{n-\ell}\ \ \text{and} \ \ T[K] = T[t^{n-\ell}],$$  if $\ell<n$.

Let $G = U(\ell-d) \oplus U(n-d)$. If $\ell>n$, then we get by presentation $(\sharp)$ above that $K/T \cong \left[G/(U\binom{1}{0} + U\binom{X^{\alpha}}{Y^{\beta'}} + U\binom{Y^{\beta}}{Z^{\gamma'}} + U\binom{Z^{\gamma}}{X^{\alpha'}})\right](-f) \cong \left[U/(X^{\alpha'}, Y^{\beta'}, Z^{\gamma'})\right](n-\ell)$ as graded $T$--modules (remember that $-f=d-\ell$ and that $K/T \cong (L/Tt^{-f})(-f)$ as graded $T$--modules). Hence $$\ell_T(K/T) = \alpha'\beta'\gamma'.$$ Similarly, if $n>\ell$, then $K/T \cong \left[U/(X^{\alpha}, Y^{\beta}, Z^{\gamma})\right](\ell-n)$ as graded $T$--modules and $$\ell_T(K/T) = \alpha\beta\gamma.$$

We are in a position to summarize these arguments.

\begin{thm} \label{algor3} Let $b=|\ell-n|$ be the absolute value of $\ell-n$. We put $I = (t^c, t^{b+c})T$ and $Q = t^cT$, where $c = \rmc (H)$ is the conductor of $H$. Then the following assertions hold true.
\begin{enumerate}[{\rm (1)}]
\item $I$ is a graded canonical ideal of $T$ with  $Q$ a reduction.
\item if $\ell>n$~$(resp. ~\ell<n)$, then $\ell_T(I/Q)=\alpha'\beta'\gamma'$ ~$(resp. ~\ell_T(I/Q)=\alpha\beta\gamma)$. 
\item $\rme_1(IT_M)=\#(H'\setminus H)$ where $H' = \left<a_1, a_2, a_3, b \right>$.
\end{enumerate}
\end{thm}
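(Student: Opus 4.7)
The plan is to exploit the explicit description $K=T+Tt^b$ with $b=|\ell-n|$ and $S=T[K]=T[t^b]$ established in the paragraphs just before the theorem, together with Lemma \ref{calce1} and Lemma \ref{3.4}.

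For assertion (1), I would first observe that $I=t^c K$. Indeed, $t^c K=t^c T+t^{c+b}T=(t^c,t^{c+b})T$, and since $K\subseteq \overline{T}=k[t]$ and every integer $m\ge c=\rmc(H)$ lies in $H$, we have $t^c\overline{T}\subseteq T$; hence $I\subseteq T$ is a graded ideal. Multiplication by $t^c$ gives a graded $T$-isomorphism $K\xrightarrow{\sim} I$, so $I\cong K\cong \rmK_T$, proving that $I$ is a graded canonical ideal. The inclusion $Q=(t^c)\subseteq I$ is immediate. Since $\overline{T}=k[t]$ is a finite $T$-module (the set $\mathbb Z_{\ge 0}\setminus H$ being finite), $S=T[K]\subseteq \overline{T}$ is a finite $T$-module, so $K^r=S$ for some $r\gg 0$; then $I^{r+1}=t^{(r+1)c}K^{r+1}=t^c\cdot t^{rc}S=QI^r$, i.e., $Q$ is a reduction of $I$.

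For assertion (2), multiplication by $t^c$ also induces a $T$-module isomorphism $K/T\cong I/Q$, so $\ell_T(I/Q)=\ell_T(K/T)$. The Hilbert--Burch presentation $(\sharp)$ combined with the identification $K=T+Tt^b$ (as carried out immediately above the theorem) yields
$$K/T \cong [U/(X^{\alpha'},Y^{\beta'},Z^{\gamma'})](n-\ell) \quad\text{if } \ell>n,\qquad K/T \cong [U/(X^{\alpha},Y^{\beta},Z^{\gamma})](\ell-n) \quad\text{if } \ell<n.$$
Computing the $k$-dimensions of these Artinian complete intersections gives $\alpha'\beta'\gamma'$ and $\alpha\beta\gamma$, respectively.

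For assertion (3), applying Lemma \ref{calce1} to the one-dimensional Cohen--Macaulay local ring $R=T_M$ with canonical ideal $IT_M$ and reduction $QT_M=(t^c)T_M$ gives
$$\rme_1(IT_M)=\ell_{T_M}(S_M/T_M),\qquad S=T\!\left[\tfrac{I}{t^c}\right]=T[K]=T[t^b].$$
Now $T[t^b]=k[t^{a_1},t^{a_2},t^{a_3},t^b]$ is exactly the semigroup ring $k[H']$, and $\{t^h\mid h\in H'\setminus H\}$ is a $k$-basis of $S/T$. Since every integer $\ge \rmc(H)$ lies in $H\subseteq H'$, the set $H'\setminus H$ is finite, so $S/T$ is a finite-length graded $T$-module supported at $M$; hence $(S/T)_M=S/T$, and over the local ring $T_M$ the length of a finite-length module coincides with its dimension over the residue field $k$. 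Therefore
$$\rme_1(IT_M)=\ell_{T_M}(S_M/T_M)=\dim_k(S/T)=\#(H'\setminus H).$$

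The arguments are largely mechanical once the presentation $(\sharp)$ and the description $K=T+Tt^b$ are in hand; the only points requiring a little care are the bookkeeping between the two cases $\ell>n$ and $\ell<n$ in part (2), and the observation in part (3) that passing from $T$ to the localization $T_M$ does not alter the length of the finite-length quotient $S/T$.
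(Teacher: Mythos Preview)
Your proof is correct and follows essentially the same route as the paper: you use $I=t^cK$ to identify $I$ with (a shift of) $\rmK_T$ and to get $I/Q\cong K/T$, then invoke the pre-theorem computation of $K/T$ for (2), and Lemma~\ref{calce1} together with $T[K]=T[t^b]=k[H']$ for (3). The paper's own proof is just a terse pointer to these same ingredients; your version simply fills in the details (e.g., the finiteness argument for the reduction and the localization step for the length in (3)).
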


\begin{proof} Since $I=t^cK\subsetneq T$ and $T \subseteq K \subseteq k[t] = \overline{T}$, we get $I \cong \rmK_T (-(f+c))$ as graded $T$--modules and $Q$ is a reduction of $I$. Hence assertion (1) follows. Assertion (2) is clear, since $I/Q\cong (K/T)(-c)$ as graded $T$--modules. Because $\e_1(IT_M)=\ell_T (T[K]/T)$ by Lemma  \ref{calce1} and $T[K]= T[t^b]$, we get assertion (3). 
\end{proof}

\begin{cor}[cf. {\cite[Corollary 3.3]{NNW2}}]\label{NHW} The ring $k[[t^{a_1}, t^{a_2}, t^{a_3}]]$ is an almost Gorenstein ring if and only if the Herzog matrix is either $\begin{pmatrix}
X & Y & Z \\
Y^{\beta'} & Z^{\gamma'} & X^{\alpha'}
\end{pmatrix}$ or $\begin{pmatrix}
X^{\alpha} & Y^{\beta} & Z^{\gamma} \\
Y & Z & X
\end{pmatrix}$.
\end{cor}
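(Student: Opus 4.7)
The plan is to combine the reduction-to-completion statement (Proposition \ref{ff}), the length characterization of almost Gorenstein (Theorem \ref{algor} together with Proposition \ref{ineq}), and the explicit formula for $\ell_T(I/Q)$ (Theorem \ref{algor3}), and then translate everything into the exponents of the Herzog matrix.

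First I would set $R = T_M$. Since the natural map $R \to \widehat{R}$ is a flat local homomorphism with $\m\widehat{R} = \widehat{\m}$ and the same residue field, Proposition \ref{ff} reduces the question to: when is $R$ almost Gorenstein? The canonical ideal $I = (t^c, t^{b+c})T$ of Theorem \ref{algor3}(1) localizes to a canonical ideal $IR$ of $R$ with reduction $QR = t^c R$, and since $\mu_R(IR) = 2$ we have $\rmr(R) = 2$ by \cite[Satz 6.10]{HK}. By Theorem \ref{algor}, $R$ is almost Gorenstein if and only if $\m\cdot IR = \m\cdot QR$, and by Proposition \ref{ineq}(1) this is equivalent to $\mu_R(IR/QR) = \ell_R(IR/QR)$. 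Since $\mu_R(IR/QR) = \mu_R(IR) - 1 = 1$, the condition becomes simply $\ell_R(IR/QR) = 1$.

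Because $I/Q$ is $M$-primary, $\ell_R(IR/QR) = \ell_T(I/Q)$, and Theorem \ref{algor3}(2) evaluates this length as $\alpha'\beta'\gamma'$ when $\ell > n$ and as $\alpha\beta\gamma$ when $\ell < n$ (recall that $\ell \ne n$ always, as noted in the derivation of $K$). Since all six exponents are positive integers, the equality $\ell_T(I/Q) = 1$ forces either $\alpha' = \beta' = \gamma' = 1$ (in the case $\ell > n$) or $\alpha = \beta = \gamma = 1$ (in the case $\ell < n$), which are precisely the two Herzog matrices in the statement.

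For the converse, I would verify that each of the two matrix forms actually lands in the correct case, using $\ell - n = a_3\gamma - a_1\alpha' = a_1\alpha - a_2\beta'$. If $\alpha' = \beta' = \gamma' = 1$, then $\ell - n = a_3\gamma - a_1 \ge a_3 - a_1 > 0$, so $\ell > n$ and $\ell_T(I/Q) = \alpha'\beta'\gamma' = 1$. If $\alpha = \beta = \gamma = 1$, then $\ell - n = a_1 - a_2\beta' \le a_1 - a_2 < 0$, so $\ell < n$ and $\ell_T(I/Q) = \alpha\beta\gamma = 1$. In both cases $R$, and hence $\widehat R$, is almost Gorenstein. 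The main obstacle is bookkeeping: checking that the sign of $\ell - n$ is consistent with the chosen form of the Herzog matrix; but once the Hilbert--Burch formulas for $\ell$ and $n$ are in hand, this reduces to the elementary inequalities $a_1 < a_2 < a_3$.
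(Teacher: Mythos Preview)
Your proof is correct and follows essentially the same route as the paper: reduce to $R=T_M$ via Proposition~\ref{ff}, use Theorem~\ref{algor} and Proposition~\ref{ineq} to identify almost Gorensteinness with $\ell_T(I/Q)=\rmr(R)-1=1$, and then read this off from Theorem~\ref{algor3}(2). The paper compresses all of this into two sentences; your only addition is the explicit sign check on $\ell-n$ for the converse, which the paper leaves to the reader but which is indeed needed to ensure that each matrix form lands in the correct case of Theorem~\ref{algor3}(2).
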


\begin{proof}
Thanks to Proposition \ref{ineq} and Theorem \ref{algor}, the ring $R=T_M$ is an almost Gorenstein ring if and only if $\ell_T(I/Q)=\rmr(T_M)-1=1$. Thus the assertion follows from  Proposition \ref{ff}  and Theorem \ref{algor3} (2), since $k[[t^{a_1}, t^{a_2}, t^{a_3}]] = \widehat{R}$.
\end{proof}

Let us explore two examples.

\begin{ex} \begin{enumerate}[{\rm (1)}]\label{4.3}
\item Let $H=\left<3, 7, 8\right>$ and $T= k[t^3, t^7, t^8]$.  The Herzog matrix of $H$ is 
$\begin{pmatrix}
X^{2} & Y & Z \\
Y & Z & X^{2}
\end{pmatrix},$ so that $k[[t^3, t^7, t^8]]$ is not an almost Gorenstein ring. We have $c=6$, $a_1\alpha=6$, and $a_2\beta'=7$. Hence $b = 1$, so that $I=(t^6, t^7)T$, $Q=t^6T$, and $\ell_T(I/Q)=2$. Since $H'=\mathbb N$, we get  $\e_1(IT_M)=\#H' = \#(\mathbb N\setminus H) = 4$.

\item Let $q>0$ be an integer and $H=\left<4, 4q+3, 4q+5\right>$. The Herzog matrix of $H$ is $\begin{pmatrix}
X^{2q+1} & Y^2 & Z \\
Y & Z & X
\end{pmatrix}$ and  $k[[t^4, t^{4q + 3}, t^{4q + 5}]]$ is an almost Gorenstein ring. We have $c = 4q + 3$, $a_1\alpha=8q+4$, and $a_2\beta'=4q+3$. Hence $b = 4q + 1$, so that  $I = (t^{4q + 3}, t^{8q+4})T$ and  $Q = t^{4q+3}T$, where $T = k[t^4, t^{4q + 3}, t^{4q + 5}]$.
\end{enumerate}
\end{ex}

\section{Gorensteiness in the algebra $\m : \m$}
Let $R$ be a Cohen-Macaulay local ring with maximal ideal $\m$ and $\operatorname{dim} R = 1$. In this section we shall settle in full generality the problem of when the endomorphism algebra $\m : \m$ of $\m$ is a Gorenstein ring. This is the question which V. Barucci and R. Fr{\"o}berg \cite[Proposition 25]{BF} tried to answer in the case where the rings $R$ are analytically unramified and Barucci \cite{B} eventually felt that there was a gap in their proof.

Let $v(R) = \mu_R(\m)$ denote the embedding dimension of $R$ and $e(R) = \rme_0({\m})$ the multiplicity of $R$ with respect to $\m$. We then have the following.

\begin{thm}\label{gorm:m} The following conditions are equivalent.
\begin{enumerate}[{\rm (1)}]
\item $\m:\m$ is a Gorenstein ring.
\item $R$ is an almost Gorenstein ring and $v(R) = e(R)$.
\end{enumerate}
\end{thm}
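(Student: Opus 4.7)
The first step is to reduce to the case where $R/\m$ is infinite via Proposition \ref{f}(2), which preserves Gorensteinness of $\m:\m$, almost-Gorensteinness of $R$, $v(R)$, and $e(R)$. The DVR case is trivial ($R=\m:\m$ is Gorenstein, $R$ is almost Gorenstein, and $v(R)=e(R)=1$), so I assume henceforth that $R$ is not a DVR. Then Lemma \ref{DVR} gives $\m:\m=R:\m$ with $\ell_R((\m:\m)/R)=\rmr(R)$, and I work throughout in Setting \ref{setting}, writing $B:=\m:\m$.

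For $(2)\Rightarrow(1)$, I distinguish whether $R$ is Gorenstein. If $R$ is Gorenstein and not a DVR, then $\rmr(R)=1$ gives $\ell_R(B/R)=1$, while $v(R)=e(R)$ gives $\m^2=a\m$ and hence $\m/a\subseteq B$ with $\ell_R((\m/a)/R)=e(R)-1$; comparing lengths forces $e(R)=2$ and $\m=aB$. Gorenstein duality $R:(R:\m)=\m$ then gives $\rmK_B=R:B=\m=aB\cong B$ as $B$-modules, so $B$ is Gorenstein. If $R$ is almost Gorenstein but not Gorenstein, Theorem \ref{algorcor}(6) gives $B=S$, and the identity $\ell_R(R/\fkc)=\rme_1(I)-\ell_R(I/Q)=\rmr(R)-(\rmr(R)-1)=1$ (using Lemma \ref{3.4}, Proposition \ref{canonineq}, and Theorem \ref{algorcor}(1)$\Leftrightarrow$(2)) forces $\fkc=\m$; combined with $\m^2=a\m$ (from $v=e$) this yields $\fkc^2=a\fkc$, and Corollary \ref{notgor}(2) makes $S=B$ Gorenstein.

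For $(1)\Rightarrow(2)$, assume $B$ is Gorenstein. The canonical fractional ideal $\rmK_B$ is identified with $K:B$ inside $\rmQ(R)$, so Gorensteinness forces $K:B=bB$ for some regular $b\in K$. The plan is to prove first that $R$ is almost Gorenstein, equivalently $K:\m=B$ by the length matching $\ell_R((K:\m)/R)=1+\ell_R(K/R)=\rmr(R)=\ell_R(B/R)$ (using $\ell_R((K:\m)/K)=1$ from \cite[Satz 3.3]{HK} and the general containment $B\subseteq K:\m$), which amounts to $\ell_R(K/R)=\rmr(R)-1$, i.e., the almost-Gorenstein condition via Proposition \ref{canonineq} and Theorem \ref{algor}. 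By the canonical duality $K:(K:\m)=\m$, the equality $K:\m=B$ gives $K:B=\m$; together with principality this forces $\m=bB$ with $b\in\m$, and then $\m^2=bB\cdot\m=b(B\m)=b\m$ (using $B\m=\m$) yields $v(R)=e(R)$.

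The main obstacle is the first half of $(1)\Rightarrow(2)$: deducing almost-Gorensteinness of $R$ from Gorensteinness of $B$. My plan is to exploit the principality $K:B=bB$ together with the duality length $\ell_R(K/(K:B))=\rmr(R)$ and the $R$-module isomorphism $bB\cong B$ via multiplication by the regular element $b$. These constraints should force $b\in\m$ and pin $bB$ down as an $R$-submodule of $K$ whose colength in $K$ equals $\rmr(R)$; matching this against the colengths coming from the four-term exact sequence $0\to K:B\to K\to\Ext^1_R(B/R,K)\to0$ (obtained from $0\to R\to B\to B/R\to0$ and the vanishing $\Hom_R(B/R,K)=0$) together with the Matlis identification $\ell_R(\Ext^1_R(B/R,K))=\ell_R(B/R)=\rmr(R)$ is the technical crux I expect to grind through to pin down $bB=\m$.
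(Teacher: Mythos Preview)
Your $(2)\Rightarrow(1)$ is fine and matches the paper's argument (your Gorenstein case is a minor variant of the paper's, which just observes $e(R)\le 2$).

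The gap is in $(1)\Rightarrow(2)$. Your two ``constraints'' collapse to one: the identity $\ell_R(K/(K:B))=\rmr(R)$ holds \emph{always}, by canonical $K$--duality applied to $R\subseteq B$ (this is exactly $\ell_R(B/R)=\rmr(R)$), so it carries no information about $B$ being Gorenstein. Your exact sequence and the Matlis count simply recompute the same number. Thus the only genuine input you extract from Gorensteinness of $B$ is the principality $K:B=bB$; but $bB=\m$ is \emph{equivalent} to $K\subseteq B$ (via $K:(K:B)=B$), which is exactly the almost Gorenstein condition $\m K\subseteq R$ you are trying to prove. All the length identities you can form from $R$, $\m$, $K$, $K:\m$, $B$, and $bB$ are mutually consistent for every value of $\ell_R(K/R)\ge \rmr(R)-1$, so no amount of ``matching colengths'' among these objects will pin down $bB=\m$.

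The paper's proof supplies the missing idea: it brings in the larger ring $S=R[K]$ and its conductor $\fkc=R:S$. Gorensteinness of $A=\m:\m$ is used to show every fractional $A$--ideal is $R$--reflexive (their Claim~2), and then $[R:*]$ gives an anti--isomorphism between the $A$--lattice $[A,S]$ and $[\fkc,\m]$, yielding $\ell_A(S/A)=\ell_A(\m/\fkc)$ (Claim~3). The crucial step converting this into $\ell_R(S/A)=\ell_R(\m/\fkc)$ uses that $R/\m$ is \emph{algebraically closed} (so every simple $A$--module has $R$--length one); combined with Lemma~\ref{3.4}(3) this forces $\ell_R(I/Q)=\rmr(R)-1$. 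Note in particular that your reduction to an infinite residue field is too weak for this step; you need algebraically closed (Proposition~\ref{f} allows this, and the paper explicitly makes that reduction).
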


\begin{proof}
After enlarging the residue class field of $R$, by Proposition \ref{f} we may assume that the field $R/\m$ is  algebraically closed and the ring $\rmQ(\widehat{R})$ is Gorenstein. We may also assume that $R$ is not a discrete valuation ring.  Therefore we have an $R$--submodule $K$ of $\rmQ (R)$ such that $R \subseteq K \subseteq \overline{R}$ and $K \cong \rmK_R$ as $R$--modules (Corollary \ref{existK}). Let us  maintain the same notation as in Setting \ref{setting}. Hence  $S=R[K]$ and $\fkc = R:S$. Let $A=\m:\m$.

(1) $\Rightarrow$ (2)
 Since $R$ is not a discrete valuation ring, $R \subsetneq R:\m = \m:\m=A$ by Lemma \ref{DVR}. Suppose that $R$ is a Gorenstein ring. Then, since $\m = R:A$ and $A$ is a Gorenstein ring, the $A$--module $\m$ is locally free of rank one (\cite[Satz 5.12]{HK}), so that $\m \cong A$ as $A$--modules. Hence $\m = aA$ for some $a\in \m$. Therefore $\m^2 = a\m$, i.e., $v(R) = e(R)$ (see \cite{S1}).

Suppose now that $R$ is not a Gorenstein ring. Since $R:A = \m$, we have  $$A \subseteq K :\m \subseteq S$$ by Corollary \ref{notgor} (1). 

\begin{claim}
Let $X$ be a finitely generated $A$--submodule of $\rmQ (R)$ such that $\rmQ (R) {\cdot}X = \rmQ (R)$. Then $X$ is a reflexive $R$--module, i.e., $X = R:(R:X)$.
\end{claim}

\begin{proof}[Proof of Claim 2]
Notice that $R : (R : A) = R : \m = A$ and $A:(A:X) = X$ for every fractional ideal $X$ of $A$ (\cite[Bemerkung 2.5]{HK}), since $A$ is a Gorenstein ring. We write $A: X = \sum_{i=1}^\ell Ay_i$ where $y_i's$ are units of $\rmQ (R)$. Then $$X = A : (A: X) = A:\sum_{i=1}^\ell Ay_i = \bigcap_{i=1}^\ell A \frac{1}{y_i}.$$ Therefore, because $A\frac{1}{y_i} \cong A$ and $A$ is $R$--reflexive,  we get
\begin{eqnarray*}
X&\subseteq& R : (R: X)\\
&=&R : (R: \bigcap_{i=1}^\ell A\frac{1}{y_i})\\
&\subseteq& R:\sum_{i=1}^\ell (R:A\frac{1}{y_i})\\
&\subseteq&\bigcap_{i=1}^\ell \left[R: (R : A\frac{1}{y_i})\right]\\
&=&\bigcap_{i=1}^\ell A\frac{1}{y_i}\\
&=&X,
\end{eqnarray*}
so that $X$ is a reflexive $R$--module.
\end{proof}

\begin{claim}
$\ell_A(S/A) = \ell_A(\m / \fkc)$.
\end{claim}

\begin{proof}[Proof of Claim 3]
Let $\ell = \ell_A(S/A)$ and take a composition series
$$
A=A_0 \subsetneq A_1 \subsetneq \cdots \subsetneq A_\ell = S
$$
as $A$--modules. Then applying $[R:*]$, by Claim 2 we get
$$
\m = R:A = R:A_0 \supsetneq R:A_1 \supsetneq \cdots \supsetneq R:A_\ell = R:S = \fkc.
$$ Hence $\ell_A(\m / \fkc) \ge \ell$ and we get $\ell_A(S/A) = \ell_A(\m / \fkc)$ by symmetry.
\end{proof}
We now notice that $\ell_A(X) = \ell_R(X)$ for every $A$--module $X$ of finite length, because $A$ is a module-finite extension of $R$ and the field $R/\m$ is algebraically closed. Consequently
$$\ell_R(S/A) = \ell_A(S/A) = \ell_A(\m / \fkc) = \ell_R(\m / \fkc)$$
and therefore by Lemma \ref{DVR} (2) we get
\begin{align*}
\ell_R(S/R) &= \ell_R(S/A) + \ell_R(A/R)\\
&= \ell_R(\m / \fkc) + \ell_R((R:\m) / R)\\
&= (\ell_R(R/\fkc) -1) + \rmr(R)\\
&= \ell_R(R/\fkc) + (\rmr(R) -1),
\end{align*}
so that $\ell_R(I/Q) = \rmr(R) - 1= \mu_R(I/Q)$ by  Lemma \ref{3.4} (3). 
Thus $R$ is an almost Gorenstein ring. Since $\m S \subseteq R$, we have $S \subseteq R : \m = A$. Hence $S = A$ and $\fkc^2 = a\fkc$ for some $a \in \fkc$ by Corollary \ref{notgor} (2). Thus $v(R) = e(R)$, because $\fkc = \m$.

(2) $\Rightarrow$ (1)
Suppose that $R$ is a Gorenstein ring. Then $e(R) \le 2$ and hence every finitely generated $R$--subalgebra of $\overline{R}$ is a Gorenstein ring. In particular,  the ring $A=\m : \m$ is  Gorenstein. Suppose that $R$ is not a Gorenstein ring. Then $S = \m : \m$ by Theorem  \ref{algorcor} and $S$ is a Gorenstein ring by Corollary \ref{notgor} (2), because $\fkc = \m$ and $\m^2 = a\m$ for some $a \in \m$.
\end{proof}

\begin{rem}
In the proof of (1) $\Rightarrow$ (2) of Theorem \ref{gorm:m} the critical part is the fact that $\ell_R(S/A) = \ell_R(\m/\fkc)$, which in our context we safely get by the assumption that $R/\m$ is an algebraically closed filed. Except this part the above proof is essentially the same as was given by Barucci and Fr{\"o}berg \cite{BF}. We nevertheless do not know whether we can still assume that $R/\m$ is an algebraically closed field, even if we restrict the notion of almost Gorenstein ring within the rings which are analytically unramified. See Question \ref{question}.   
\end{rem}

We note an example.

\begin{ex} \label{alex}Let $A$ be a regular local ring with maximal ideal $\n$ and $\operatorname{dim} A = n \ge 3$. Assume that the field $A/\n$ is infinite. Let ${\n} = (X_1, X_2, \ldots, X_n)$ and put $$R = A/\bigcap_{i=1}^{n}(X_1,\ldots,\check{X_i},\ldots, X_n).$$  Let $\m = (x_1, x_2, \ldots,x_n)$ be the maximal ideal of $R$, where $x_i$ denotes the image of $X_i$ in $R$. Then $I = \left(x_i+x_{i+1}\mid 1\leq i\leq n-1\right)$ is a canonical ideal of $R$. The ring $R$ is an almost Gorenstein ring with $\rme_1(I) = \rmr(R) = n-1$. We have $\m : \m = \overline{R}$ and hence $\m : \m$ is a Gorenstein ring.
\end{ex}

\begin{proof} We put $\p_i=(x_1, \ldots, \check{x_i}, \ldots, x_n)$ in $R$. Then $\overline R = \prod_{i=1}^nR/\p _i$ and we have the exact sequence
$$0\to R\overset{\varphi}{\to} \overline R \to C \to 0$$
of $R$--modules, 
where $\varphi(a)=(\overline a, \overline a, \ldots,\overline a)\in \overline R$ for every $a\in R$. For each $1\leq j\leq n$ let  ${\mathbf{e}}_j= (0,\ldots,0,\overset{j}{\check{1}},0,\ldots,0)$ and $\mathbf{e}=\sum_{j=1}^n{\mathbf{e}}_j$. Then $\overline R= R{\mathbf{e}} + \sum_{j=1}^{n-1}R{\mathbf{e}}_j$. Since 
$$
x_i{\mathbf{e}}_j = \left\{
\begin{array}{lc}
0 & (i\not=j),\\
\vspace{1mm}\\
x_i{\mathbf{e}} & (i=j),
\end{array}
\right.
$$
for all $1\leq i, j \leq n$, we get ${\m} \overline R \subseteq \varphi(R)$ and hence $\m = R: \overline R$. Therefore $R$ is an almost Gorenstein ring by Corollary \ref{3.12}. Since $\m : \m = \overline{R}$, $\m : \m$ is a Gorenstein ring. Similarly as in Example \ref{notexist}, $I = \left(x_i+x_{i+1}\mid 1\leq i\leq n-1\right)$ is a canonical ideal of $R$ with $\mu_R(I) = n-1$, so that $R$ is not a Gorenstein ring. Let $Q=(a)$ be a reduction of $I$ and put $K=\frac{I}{a}$ and $S = R[K]$. Then $S = \overline{R}$, since $S = \m : \m$ by Theorem \ref{algorcor}.
\end{proof}

The following  example shows that $R$ is not necessarily an almost Gorenstein ring, even if $S = R[K]$ is a Gorenstein ring.

\begin{ex}  Let $k[[t]]$ be the formal power series ring over a field $k$.
We look at the local ring $R=k[[t^3, t^7, t^8]]$. Then $I = (t^6, t^7)$ is a canonical ideal of $R$ with $(t^6)$ a reduction (see Example \ref{4.3} (1)). 
Choose $K = \frac{I}{t^6} = R + Rt$. Then $S = R[K]= k[[t]]$, so that $S$ is a Gorenstein ring but $R$ is not an almost Gorenstein ring.
\end{ex}

\section{Almost Gorenstein rings obtained by idealization}
In this section we  explore almost Gorenstein rings obtained by idealization. The purpose is to show how our modified notion of almost Gorenstein ring enriches examples and the theory as well.

Let $R$ be a Cohen-Macaulay local ring with maximal ideal $\m$ and $\operatorname{dim} R= 1$. For each $R$--module $M$ we denote by $R \ltimes M$ the idealization of $M$ over $R$. Hence $R\ltimes M = R \oplus M$ as additive groups and the multiplication in $R\ltimes M$ is given by
$$(a,x)(b,y) = (ab, ay +bx).$$
We then have $\fka := (0) \times M$ forms an ideal of $R\ltimes M$ and $\fka^2 = (0)$. Hence, because   $R \cong (R\ltimes M)/\fka$, $R\ltimes M$ is a local ring with maximal ideal $\m \times M$ and $\operatorname{dim} R \ltimes M = 1$. Remember that $M \cong \rmK_R$ as $R$--modules if and only if $R\ltimes M$ is a Gorenstein ring, provided $M$ is a finitely generated $R$--module and $M \ne (0)$ (\cite{R}).

Let us begin with the following.

\begin{prop} \label{alid1} Let $I$ be an arbitrary $\m$--primary ideal of $R$ and suppose that there exists an element $a \in I$ such that $Q = (a)$ is a reduction of $I$. Assume that $R$ possesses the canonical module $\rmK_R$ and put $I^{\vee} = \Hom_R(I,\rmK_R)$.
Then the following conditions are equivalent.
\begin{enumerate}[{\rm (1)}]
\item $R \ltimes I^{\vee}$ is an almost Gorenstein ring.
\item ${\m}I={\m}Q$ and $I^2=QI$.
\item $\mu_R(I/Q) = \ell_R(I/Q) = \rme_1(I)$.
\end{enumerate} 
\end{prop}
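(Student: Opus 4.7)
The equivalence $(2)\Leftrightarrow(3)$ is immediate from Proposition \ref{ineq}: $\mu_R(I/Q)=\ell_R(I/Q)$ is equivalent to $\m I\subseteq Q$ (hence $\m I=\m Q$), while $\ell_R(I/Q)=\rme_1(I)$ is equivalent to $I^2=QI$.

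For $(1)\Leftrightarrow(2)$, I set $A=R\ltimes I^\vee$ with maximal ideal $\fkM=\m\ltimes I^\vee$. Since $I$ is a maximal Cohen--Macaulay $R$-module (as it is $\m$-primary), the reflexivity $I^{\vee\vee}\cong I$ holds, and because $A$ is a finite $R$-algebra of the same dimension as $R$,
\[
\rmK_A\;\cong\;\Hom_R(A,\rmK_R)\;\cong\;\rmK_R\oplus I
\]
as $R$-modules, with $A$-action $(r,f){\cdot}(k,x)=(rk+f(x),\,rx)$. To apply the machinery of Section 3 I would first check that $A$ itself admits a canonical ideal via Proposition \ref{existI}: for each $\fkp\in\Min\widehat{R}$ one has $\widehat{I}_\fkp=\widehat{R}_\fkp$ (as $I$ is $\m$-primary), so $(\widehat{A})_{\fkp\ltimes\widehat{I}^\vee}=\widehat{R}_\fkp\ltimes\rmK_{\widehat{R}_\fkp}$ is Gorenstein by Reiten's theorem, whence $\rmQ(\widehat{A})$ is Gorenstein. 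After passing to a flat extension with infinite residue field via Proposition \ref{ff}, Corollary \ref{existK} then supplies a fractional canonical ideal $\fkK$ of $A$ with $A\subseteq\fkK\subseteq\overline{A}$.

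The decisive step is a concrete realization of $\fkK$ inside $\rmQ(A)=\rmQ(R)\ltimes\rmQ(R)$. The naive direct-sum embedding $\rmK_R\oplus I\hookrightarrow \rmQ(R)\oplus\rmQ(R)$ fails to be $A$-linear under the twisted action on $\rmK_A$; instead I plan to use the coordinate-swap $A$-linear embedding $\iota\colon(k,x)\mapsto(x,k)$ rescaled by $(1/a,0)$, producing $\fkK=\tfrac{I}{a}\oplus\tfrac{\rmK_R}{a}$, which contains $A$ thanks to $aR\subseteq I$ and $aI^\vee\subseteq\rmK_R$, and lies in $\overline{A}=\overline{R}\ltimes\rmQ(R)$ thanks to $I\subseteq a\overline{R}$. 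By Theorem \ref{algor}, $A$ is almost Gorenstein iff $\fkM\fkK\subseteq A$, and a direct fractional-ideal computation decomposes this containment into
\[
\m I\subseteq(a)\qquad\text{and}\qquad \m\rmK_R+I{\cdot}I^\vee\subseteq aI^\vee.
\]
Using $\rmK_R:\rmK_R=R$, the first containment forces $\m\rmK_R\subseteq aI^\vee$ automatically, and using the reflexivity identity $a\rmK_R:(\rmK_R:I)=aI$, the remaining $I{\cdot}I^\vee\subseteq aI^\vee$ rearranges to $I^2\subseteq aI$, i.e.\ $I^2=QI$. Hence $\fkM\fkK\subseteq A$ is equivalent to the conjunction $\m I=\m Q$ and $I^2=QI$, which is exactly condition $(2)$.

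The main difficulty will be pinning down the correct $A$-linear fractional model of $\rmK_A$: the obvious identification of $\rmK_R\oplus I$ as a subset of $\rmQ(R)\oplus\rmQ(R)$ is not $A$-equivariant for the twisted action $(r,f){\cdot}(k,x)=(rk+f(x),rx)$, and one must introduce both the coordinate swap and the scaling factor $(1/a,0)$ coming from the reduction. Once the fractional form $\fkK=\tfrac{I}{a}\oplus\tfrac{\rmK_R}{a}$ is in hand, the translation of $\fkM\fkK\subseteq A$ into the two equalities of $(2)$ is a short calculation relying only on $\rmK_R:\rmK_R=R$ and on the $\rmK_R$-reflexivity of $I$.
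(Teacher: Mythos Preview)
Your proof is correct and follows essentially the same route as the paper: both identify a canonical ideal of $A=R\ltimes I^\vee$ together with the principal reduction generated by $(a,0)$, and then invoke Theorem \ref{algor} to translate almost Gorensteinness into the two containments $\m I\subseteq Q$ and $I\,I^\vee\subseteq aI^\vee$, the latter being equivalent to $I^2=QI$ by reflexivity. The paper reaches the canonical ideal more directly by observing that $T=R\ltimes L$ (with $L=\operatorname{Im}(\rmK_R\hookrightarrow I^\vee)$) is Gorenstein and computing $T:A=I\times L$; your coordinate-swap embedding of $\rmK_A=\rmK_R\oplus I$ followed by scaling by $(1/a,0)$ produces exactly the fractional form $\tfrac{1}{(a,0)}(I\times L)$ of the same ideal, so the two constructions coincide.

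One notational point to tighten: you write $\rmQ(A)=\rmQ(R)\ltimes\rmQ(R)$ and $\overline{A}=\overline{R}\ltimes\rmQ(R)$, but since the proposition only assumes $\rmK_R$ exists (not that $\rmQ(\widehat{R})$ is Gorenstein), $\rmK_R$ need not embed in $\rmQ(R)$. The correct ambient ring is $\rmQ(A)=\rmQ(R)\ltimes(\rmQ(R)\otimes_R\rmK_R)$, and your colon identities such as $a\rmK_R:(\rmK_R:I)=aI$ should be read inside $\rmQ(R)\otimes_R\rmK_R$; with that adjustment every step goes through unchanged.
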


\begin{proof} 
By Proposition \ref{ineq} it is enough to show (1)$\Leftrightarrow $(2). Let  $M=I^{\vee}$. We look at the exact sequence
$$0\to R^{\vee} \overset{\iota^{\vee}} \to I^{\vee} \to \Ext_R^1(R/I, \rmK_R) \to 0$$
derived from the exact sequence
$$0\to I \overset{\iota}{\to}  R \to R/I \to 0$$
of $R$--modules, where $\iota : I \to R$ denotes the embedding. We put $L = \operatorname{Im} \iota^{\vee}$. Then $\Ext_R^1(R/I, \rmK_R) \cong M/L$. Hence $I = \Ann_R M/L$, because $$R/I \cong \Ext_R^1(\Ext_R^1(R/I, \rmK_R), \rmK_R)$$ by \cite[Satz 6.1]{HK}.
Let $A = R \ltimes M$ and $T=R \ltimes L$. Then $A$ is a module--finite extension of $T$ with $\rmQ(A) = \rmQ(T)$. We have $\rmK_A \cong T:A$, because $T$ is a Gorenstein ring. A direct computation shows $$T:A = (\operatorname{Ann}_RM/L) \times L = I \times L.$$ Hence $I \times L$ is a canonical ideal of $A$. Let $f = (a,0) \in I \times L$. Then $fA$ is a reduction of $I \times L$, since $Q = aR$ is a reduction of $I$ and $[(0) \times L]^2=(0)$ in $A$. Therefore by Theorem \ref{algor} the ring $A$ is almost Gorenstein if and only if $(\m \times M) {\cdot}(I \times L) \subseteq fA$. The latter condition is equivalent to saying that $$\m I \subseteq Q ~~\text{and}~~IM = QM \supseteq  \m L.$$

Look at the exact sequence
$$
0 \to M \overset{a}{\to} M \to \Ext_R^1(I/QI, \rmK_R) \to 0
$$
obtained by the exact sequence $$
0 \to I \overset{a}{\to} I \to I/QI \to 0.
$$
We then have $\Ann_R M/QM = \Ann_R I/QI$, because $\Ext_R^1(I/QI, \rmK_R) \cong M/QM$. Hence $IM = QM$ if and only if $I^2 = QI$.

Let $\varphi: R \to I$ be the $R$--linear map defined by $\varphi(1) = a$ and look at the exact sequence 
$$0 \to M \overset{\varphi^\vee}{\to} R^\vee \to \Ext_R(I/Q, \rmK_R) \to 0$$ obtained by the exact sequence
$$
0\to R \overset{\varphi} \to I \to I/Q \to 0.
$$
We then have a commutative diagram 
$$
\xymatrix{ 0 \ar[r]& QM \ar[r]  \ar @{} [dr] |{\circlearrowleft}& L \ar[r] & L/QM \ar[r]& 0\\
0 \ar[r]& M \ar[r]^{\varphi^{\vee}} \ar[u]^{\wr} & R^{\vee} \ar[u]_{\wr}^{\iota^{\vee}} \ar[r]& \Ext_R^1(I/Q, \rmK_R) \ar[r]& 0
}
$$
with exact rows, so that  
$$\Ext_R^1(I/Q, \rmK_R)\cong L/QM.$$
Therefore $\m L \subseteq QM$, once $\m I \subseteq Q$. Hence $A = R \ltimes I^\vee$ is an almost Gorenstein ring if and only if $\m I = \m Q$ and $I^2 = QI$, which completes the proof of Proposition \ref{alid1}.
\end{proof}

As an immediate consequence of Proposition \ref{alid1} we get the following.

\begin{cor}\label{5.2} 
Suppose that $R$ possesses the canonical module $\rmK_R$. If $R$ is not a discrete valuation ring, then $R \ltimes (Q:_R\m)^\vee$ is an almost Gorenstein ring for every parameter ideal $Q$ in $R$, where $(Q:_R\m)^\vee = \Hom_R(Q:_R\m, \rmK_R)$.
\end{cor}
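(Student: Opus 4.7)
The plan is to apply Proposition \ref{alid1} to the ideal $I = Q :_R \m$ and verify condition (2) there, namely that $\m I = \m Q$ and $I^2 = QI$.

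First I would observe that $I = Q :_R \m$ is an $\m$-primary ideal of $R$, since $Q$ is $\m$-primary and the colon of an $\m$-primary ideal by $\m$ is again $\m$-primary. By the very definition of the colon, $\m I = \m (Q :_R \m) \subseteq Q$. Next, the fact that $R$ is a one-dimensional Cohen-Macaulay local ring which is not a discrete valuation ring means $R$ is not a regular local ring. Therefore, by the result of \cite{CP} already invoked in the proof of Corollary \ref{reme1}(2) and in the proof of Theorem \ref{algor}, we get $I^2 = QI$.

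Since $I^2 = QI$, the principal parameter ideal $Q$ is a reduction of $I$; in particular $Q$ is a minimal reduction. Applying Proposition \ref{ineq}(1), the containment $\m I \subseteq Q$ is then equivalent to $\m I = \m Q$. Thus both conditions of Proposition \ref{alid1}(2) are satisfied, so $R \ltimes I^\vee = R \ltimes (Q :_R \m)^\vee$ is an almost Gorenstein ring.

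There is really no serious obstacle, since both required facts — the equality $I^2 = QI$ for $I = Q :_R \m$ and the equivalence $\m I \subseteq Q \Longleftrightarrow \m I = \m Q$ in the presence of a minimal reduction — have been established earlier in the paper. The only subtlety to flag is that one must cite the non-DVR hypothesis precisely at the step where \cite{CP} is applied, since $I^2 = QI$ fails exactly in the regular case.
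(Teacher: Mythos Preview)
Your proof is correct and follows essentially the same approach as the paper: both apply Proposition~\ref{alid1} to $I=Q:_R\m$, invoke \cite{CP} (using that $R$ is not regular) to obtain $I^2=QI$, and note that $\m I\subseteq Q$ is automatic from the definition of the colon. You simply spell out more of the details---in particular the passage from $\m I\subseteq Q$ to $\m I=\m Q$ via Proposition~\ref{ineq}(1)---that the paper leaves implicit in its one-line proof.
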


\begin{proof}
Let  $I = Q :_R\m $. Then by \cite{CP} we have $I^2=QI$, so that the ideal $I$ satisfies condition (2) in Proposition \ref{alid1}. 
\end{proof}

\begin{thm} \label{alid2} Suppose  that $R$ possesses the canonical module $\rmK_R$ and the residue class field $R/\m$ of $R$ is infinite. Let $M$ be a Cohen-Macaulay $R$--module  with $\operatorname{Ann}_R M = (0)$. Then the following conditions are equivalent.
\begin{enumerate} [{\rm(1)}]
\item $R\ltimes M$ is an almost Gorenstein ring.
\item There exists an ${\m}$--primary ideal $I$ of $R$  and $a \in I$ such that $M \cong I^\vee$ as $R$--modules, $I^2=QI$, and ${\m}I=\m Q$, where $Q = (a)$ and $I^\vee = \Hom_R(I, \rmK_R)$.
\end{enumerate}
\end{thm}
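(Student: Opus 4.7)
The direction $(2) \Rightarrow (1)$ is immediate from Proposition \ref{alid1}: the conditions $I^2 = QI$ and $\m I = \m Q$ force $R \ltimes I^{\vee}$ to be almost Gorenstein, and any $R$-module isomorphism $M \cong I^{\vee}$ lifts to a ring isomorphism $R \ltimes M \cong R \ltimes I^{\vee}$. The substance lies in $(1) \Rightarrow (2)$, and my plan is to build a canonical ideal of $A := R \ltimes M$ in a transparent form from which the required $I$ and $a$ can be read off, and then to invoke Proposition \ref{alid1}.

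First I observe that $A$ is a module-finite, one-dimensional, Cohen-Macaulay $R$-algebra, so $\rmK_A \cong \Hom_R(A,\rmK_R)$. The $R$-decomposition $A = R \oplus M$ induces a natural $R$-isomorphism $\rmK_A \cong \rmK_R \oplus M^{\vee}$, under which the $A$-action reads $(b,y)\cdot (u,\psi) = (bu+\psi(y),\, b\psi)$. Setting $\fka := (0)\times M$, a direct check gives $(0:_{\rmK_A}\fka) = \rmK_R \oplus 0$, and the quotient $\rmK_A/(0:_{\rmK_A}\fka)$ is isomorphic to $M^{\vee}$ as an $R$-module.

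Now assume $A$ is almost Gorenstein. Then $\rmQ(\widehat{A})$ is Gorenstein by Proposition \ref{existI}, and since $A/\fkn = R/\m$ is infinite, Corollary \ref{existK2} supplies a canonical ideal $J$ of $A$ with a principal reduction $fA$, where $f = (a,x)$ is $A$-regular. Because $f$ is regular on $A = R \oplus M$, the element $a$ must be non-zero-divisor on both $R$ and $M$; in particular $a$ is $R$-regular. Let $\pi : A \to R$, $(b,y)\mapsto b$, be the canonical ring projection with kernel $\fka$, and set $I := \pi(J)$. Since $J$ is $\fkn$-primary and $J \subseteq \fkn$, the ideal $I$ is $\m$-primary. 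Putting $L := J \cap \fka \subseteq M$, I obtain the short exact sequence
$$0 \longrightarrow L \longrightarrow J \overset{\pi}{\longrightarrow} I \longrightarrow 0$$
of $R$-modules. Transporting this through the isomorphism $J \cong \rmK_A$ and applying the preceding paragraph gives $L \cong \rmK_R$ and $I \cong M^{\vee}$, so by the canonical biduality $M \cong M^{\vee\vee}$ (\cite[Satz 6.1]{HK}) I conclude $M \cong I^{\vee}$.

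To finish, I check that $Q := (a)$ is a reduction of $I$. Since $fA$ is a reduction of $J$, one has $J^{n+1} = fJ^n$ for some $n \ge 0$; applying the surjective ring homomorphism $\pi$ yields $I^{n+1} = aI^n$, so $Q$ is indeed a reduction of $I$. Thus $I$ is an $\m$-primary ideal of $R$ containing the principal reduction $Q=(a)$ and satisfying $M \cong I^{\vee}$, so $A \cong R \ltimes I^{\vee}$ as rings. Since $A$ is almost Gorenstein, Proposition \ref{alid1} delivers $\m I = \m Q$ and $I^2 = QI$, which is condition (2). The step I expect to require the most care is the identification of $\rmK_A / (0:_{\rmK_A}\fka)$ with $M^{\vee}$, since it rests on the twisted $A$-action on $\Hom_R(A,\rmK_R)$ rather than just the $R$-decomposition.
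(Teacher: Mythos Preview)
Your proof is correct and follows a genuinely different path from the paper's. The paper shows $M^{\vee}$ is isomorphic to an ideal of $R$ by passing to the completion, localizing at each $\fkp \in \Ass\widehat{R}$, and using Reiten's criterion: since $(\widehat{R}\ltimes\widehat{M})_{\fkp\times\widehat{M}}$ is Gorenstein and $\widehat{M}_\fkp \ne 0$, one gets $\widehat{M}_\fkp \cong \rmK_{\widehat{R}_\fkp}$, hence an embedding $\rmK_R \hookrightarrow M$ with finite-length cokernel whose $\rmK_R$-dual realizes $M^{\vee}$ inside $R$; a principal reduction of $I$ is then chosen separately via the infinite residue field. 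You instead compute $\rmK_A \cong \rmK_R \oplus M^{\vee}$ with its twisted $A$-action, realize $\rmK_A$ as a canonical ideal $J \subseteq A$ (legitimate because $A$ almost Gorenstein gives $\rmQ(\widehat{A})$ Gorenstein and $A/\fkn = R/\m$ is infinite; strictly this is Corollary~\ref{existK} together with Corollary~\ref{existK2}), and read off both $I = \pi(J) \cong M^{\vee}$ and its reduction $(a) = (\pi(f))$ simultaneously. One link you should make explicit: the identification $L = J \cap \fka \cong \rmK_R$ rests on $J \cap \fka = (0:_J\fka)$, which holds because $\Ann_R M = 0$ forces $(0:_A\fka) = \fka$; this, rather than the $A$-action on $\rmK_A$, is where faithfulness of $M$ does its real work in your argument. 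Your route avoids completion and localization and delivers the reduction for free; the paper's route, in exchange, isolates the clean module-theoretic statement that $\rmK_R$ embeds in $M$ before any ideal-theoretic manipulation.
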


\begin{proof} Thanks to Proposition \ref{alid1}, we have only to prove (1) $\Rightarrow$ (2). It suffices to show $M \cong I^{\vee}$ for some ideal $I$ of $R$. Notice that $\rmQ (\widehat{A})$ is a Gorenstein ring, since the ring $\widehat{A} = \widehat{R} \ltimes \widehat{M}$ is almost Gorenstein (Proposition \ref{ff}), where $\widehat{*}$ denotes the $\m$--adic completion. Let $\fkp \in \Ass \widehat{R}$. Then, since $$(\widehat{R} \ltimes \widehat{M})_{{\p}\times \widehat{M}} = \widehat{R}_{\p} \ltimes \widehat M_{\p}$$ is a Gorenstein ring and $\widehat M_{\p} \ne (0)$, by \cite{R} we get $(\rmK_{\widehat{R}})_\fkp \cong \rmK_{{\widehat{R}}_\fkp} \cong \widehat{M}_\fkp.$ Consequently $$\rmQ(\widehat R)\otimes _{\widehat R} \rmK_{\widehat R} \cong \rmQ(\widehat R)\otimes _{\widehat R} \widehat M$$ as $\rmQ(\widehat{R})$--modules, which yields an exact sequence 
$$0\to \rmK_{\widehat R} \overset{\iota}{\to} \widehat M \to L \to 0$$
of $\widehat R$--modules with $\ell_{\widehat R}(L)<\infty$. Via the homomorphism $\iota$, let us regard $\rmK_{\widehat{R}}$ as an $\widehat{R}$--submodules of $\widehat{M}$. Then, since $\widehat{\m}^{\ell}\widehat M \subseteq \rmK_{\widehat{R}}$ for some  $\ell \gg 0$, we get $\rmK_{\widehat{R}} = \widehat{R}K$ in $\widehat{M}$ where $K = \rmK_{\widehat{R}} \cap M$ (here $M$ is considered to be an $R$--submodule of $\widehat{M}$). Therefore $K = \rmK_R$ and $\ell_R(M/K) < \infty$, because $\widehat{K} = \widehat{R}K = \rmK_{\widehat{R}}$. Hence, taking the $\rmK_R$--dual of the exact sequence $$0 \to \rmK_R \to M \to M/\rmK_R \to 0,$$ we get an embedding $$0\to M^{\vee} \to (\rmK_R)^{\vee}.$$
Thus $M^{\vee}$ is isomorphic to an ideal $I$ of $R$, because $R = \rmK_R^\vee$ (\cite[Satz 6.1]{HK}). This completes the proof of Theorem \ref{alid2}.
\end{proof}

When $R$ is a Gorenstein ring, we can simplify Theorem \ref{alid2} as follows.

\begin{cor}\label{alidg} Assume that $R$ is a Gorenstein ring and let $M$ be a Cohen-Macaulay $R$--module with $\operatorname{Ann}_R M = (0)$. Then the following conditions are equivalent.
\begin{enumerate}[{\rm (1)}]
\item $R\ltimes M$ is an almost Gorenstein ring.
\item Either $M \cong R$ or $M \cong {\m}$.
\end{enumerate}
\end{cor}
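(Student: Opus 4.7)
The plan is to reduce both directions to Proposition~\ref{alid1} and Theorem~\ref{alid2} once the dual $I^{\vee}$ is identified explicitly under the Gorenstein hypothesis on $R$.

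The key computation is the following, and it will be used in both directions. Since $R$ is Gorenstein, $\rmK_R \cong R$, so for any $\m$-primary ideal $I$ containing a reduction $Q=(a)$, the dual $I^{\vee}$ can be realized as the fractional ideal $R:_{\rmQ(R)} I$, and a direct check gives
\[
a\cdot\bigl(R:_{\rmQ(R)} I\bigr) \;=\; Q:_R I.
\]
Under the additional hypothesis $\m I \subseteq Q$ one has $\m \subseteq Q:_R I$, and since $R$ is local, $Q:_R I$ is either $R$ (equivalently $I=Q$) or $\m$ (equivalently $I\supsetneq Q$). Hence $I^{\vee}\cong R$ or $I^{\vee}\cong \m$, respectively.

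For the direction (2)~$\Rightarrow$~(1): if $M\cong R$, take $I=Q=(a)$ any parameter ideal; the hypotheses of Proposition~\ref{alid1} are trivially satisfied and $I^{\vee}\cong R\cong M$. If $M\cong\m$ and $R$ is not a discrete valuation ring, take $I=Q:_R\m$ for some parameter $Q=(a)$; then $I^2=QI$ by \cite{CP}, $\m I\subseteq Q$ yields $\m I=\m Q$, and the key computation identifies $I^{\vee}\cong \m\cong M$, so Proposition~\ref{alid1} applies. The DVR case collapses into the first since then $\m\cong R$.

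For the direction (1)~$\Rightarrow$~(2): apply Lemma~\ref{flat} to obtain a flat local extension $R\to R_1$ with $\m_1=\m R_1$ and infinite residue class field. The induced map $R\ltimes M \to R_1\ltimes(R_1\otimes_R M)$ is flat local with maximal ideal of the source generating that of the target, so by Proposition~\ref{ff} the idealization $R_1\ltimes M_1$ is almost Gorenstein. Theorem~\ref{alid2} then supplies an $\m_1$-primary ideal $I$ of $R_1$ with $M_1\cong I^{\vee}$, $I^2=QI$, and $\m_1 I=\m_1 Q$, and the key computation forces $M_1\cong R_1$ or $M_1\cong\m_1\cong R_1\otimes_R\m$. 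Descent via Proposition~\ref{f}(3) yields $M\cong R$ or $M\cong\m$ as $R$-modules.

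The only nontrivial step is the identification of $I^{\vee}$ as a fractional ideal and the dichotomy $Q:_R I\in\{R,\m\}$; the latter is forced purely by locality once $\m\subseteq Q:_R I$, so the Gorenstein hypothesis enters essentially through $\rmK_R\cong R$, turning $I^{\vee}$ into $R:_{\rmQ(R)} I$. The residue-field enlargement and faithfully flat descent at the end are routine applications of results already established in Section~2.
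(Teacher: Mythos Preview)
Your proof is correct and follows essentially the same approach as the paper. The only cosmetic difference is in how the dichotomy is phrased: the paper argues that $\m I\subseteq Q$ and $\ell_R((Q:_R\m)/Q)=\rmr(R)=1$ force $I\in\{Q,\,Q:_R\m\}$ and then dualizes, whereas you compute $I^{\vee}\cong Q:_R I$ directly and observe $Q:_R I\in\{R,\m\}$ by locality; these are the same argument read from opposite ends of the duality, and the residue-field enlargement plus descent via Proposition~\ref{f}(3) and Proposition~\ref{ff} is handled identically.
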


\begin{proof} Thanks to Proposition \ref{f} (3) and Proposition \ref{ff}, enlarging the residue class field of $R$, we may assume that the field $R/\m$ is infinite. Let $*^\vee = \Hom_R(*,R)$.

(2) $\Rightarrow$ (1) If $M \cong R$, then $R\ltimes M$ is a Gorenstein ring. Suppose $M \cong \m$. To see that $R\ltimes M$ is an almost Gorenstein ring, we may assume $R$ is not a discrete valuation ring. Since $\m \cong \Hom_R(\Hom_R(\m, R),R) \cong (Q :_R \m)^\vee$ for every parameter ideal $Q$ in $R$ (\cite[Korollar  6.8]{HK}), the ring $R \ltimes M ~(\cong R \ltimes \m)$ is an almost Gorenstein ring by Corollary  \ref{5.2}.

(1) $\Rightarrow$ (2) By Theorem \ref{alid2} $M\cong I^{\vee}$ for some $\m$--primary ideal $I$ of $R$ such that ${\m}I={\m}Q$ and $I^2 = QI$, where $Q=(a)$ is a reduction of $I$. Since $Q \subseteq I \subseteq Q:_R{\m}$ and $\ell_R((Q:_R{\m})/Q) = 1$, we have either $I=Q$ or $I=Q:_R{\m}$. If $I=Q$, then $M\cong R$. If $I=Q:_R{\m}$, then $M \cong (Q :_R \m)^\vee \cong \Hom_R(\Hom_R(\m, R),R) \cong \m$. Hence the result.
\end{proof}

The following result shows the property of being an almost Gorenstein ring is preserved via idealization of the maximal ideal,  and vice versa.

\begin{thm} \label{algorm}The following conditions are equivalent.
\begin{enumerate}[{\rm (1)}]
\item $R\ltimes {\m}$ is an almost Gorenstein ring.
\item $R$ is an almost Gorenstein ring.
\end{enumerate}
When this is the case, $v(R\ltimes {\m}) = 2v(R)$.
\end{thm}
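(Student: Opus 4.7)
My plan is to apply Theorem~\ref{alid2} with $M=\m$, which is a faithful Cohen--Macaulay $R$-module of dimension one: that theorem characterizes $R\ltimes\m$ being almost Gorenstein by the existence of an $\m$-primary ideal $I$ of $R$ and $a\in I$ with $(a)$ a reduction of $I$, satisfying $I^{\vee}\cong \m$, $I^2=(a)I$, and $\m I=\m(a)$, where $I^{\vee}=\Hom_R(I,\rmK_R)$. Before invoking this I will apply Proposition~\ref{ff} to both $R\to R_1$ and the induced flat local map $R\ltimes\m\to R_1\ltimes \m_1$ in order to reduce to the case where $R$ possesses the canonical module and $R/\m$ is infinite. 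Let $K$ be a fractional canonical ideal of $R$ as in Setting~\ref{setting}.

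For (2)$\Rightarrow$(1): the DVR case is immediate (take $I=\m$, noting $R\ltimes\m\cong R[\varepsilon]/(\varepsilon^2)$ is a hypersurface, hence Gorenstein). In the non-DVR case I will take $I=(a):_R\m$ for any parameter ideal $(a)$. The identity $I^2=(a)I$ is given by \cite{CP}, while $\m I=\m(a)$ follows from a length argument based on $\m I\subseteq(a)\cap\m=(a)$ together with $a\m\subseteq \m I$. The key verification is $I^{\vee}\cong \m$: identifying $I=a(R:\m)$ in $Q(R)$, two applications of canonical duality yield $I^{\vee}=\tfrac{1}{a}\,\m K$, and Theorem~\ref{algorcor}(4)(6) combined with Theorem~\ref{algor} give $K:\m=\m:\m$ when $R$ is almost Gorenstein, forcing $\m K\subseteq \m$; combined with $\m\subseteq \m K$ from $1\in K$, this yields $\m K=\m$ and hence $I^\vee\cong \m$.

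For (1)$\Rightarrow$(2): I will take $I$ as supplied by Theorem~\ref{alid2} and write $I=\alpha(K:\m)$ in $Q(R)$ using $I\cong \m^{\vee}\cong K:\m$. Setting $u=a/\alpha\in K:\m$, the conditions become $\m(K:\m)=u\m$ and $(K:\m)^2=u(K:\m)$. The first yields $(K:\m)/u\subseteq \m:\m$; the second makes $(K:\m)/u$ a subring of $Q(R)$ containing $R$. If the reduction element $a$ can be chosen inside $\alpha K\subseteq I$, equivalently $u\in K$, then $K/u$ satisfies the hypotheses of Setting~\ref{setting} with $R\subseteq K/u\subseteq\overline R$, and $\m\cdot (K/u)\subseteq \m\subseteq R$, so Theorem~\ref{algor} yields that $R$ is almost Gorenstein. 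I will arrange $u\in K$ by a genericity argument: the image of $\alpha K$ in $I/\m I$ has codimension one (since $\ell_R(I/\alpha K)=1$), the locus of elements of $I$ generating a reduction forms a Zariski-dense open subset of $\mathbb{P}(I/\m I)$ over $R/\m$, and the infinite residue field hypothesis ensures that these two loci meet.

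Finally, the formula $v(R\ltimes \m)=2v(R)$ follows from the direct computation $(\m\ltimes \m)^2=\m^2\ltimes \m^2$, which gives $(\m\ltimes \m)/(\m\ltimes \m)^2\cong (\m/\m^2)^{\oplus 2}$ as $R/\m$-vector spaces. The principal obstacle is the (1)$\Rightarrow$(2) direction, specifically the genericity argument needed to select a reduction of $I$ lying inside $\alpha K$; this is exactly where the reduction to an infinite residue field at the outset becomes essential, since without it one has $K/u\subseteq \m:\m$ but no way to reinterpret $K/u$ as a fractional canonical ideal in Setting~\ref{setting}.
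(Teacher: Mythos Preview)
Your direction (2)$\Rightarrow$(1) is correct and close in spirit to the paper's: the paper takes $\fka=bS$ with $S=R[K]$ rather than $(a):_R\m$, but for $R$ almost Gorenstein and not Gorenstein these coincide since $\m:\m=S$ (Theorem~\ref{algorcor}). Your computation $I^\vee=\tfrac{1}{a}\m K$ together with $\m K=\m$ is a pleasant alternative to the paper's direct citation $S\cong\m^\vee$.

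The genuine gap is in (1)$\Rightarrow$(2). Your genericity step asserts that the Zariski-dense open locus of reduction elements in $\mathbb{P}(I/\m I)$ must meet the hyperplane $\mathbb{P}(\alpha K/\m I)$; but a dense open can certainly miss a hyperplane (take the complement of that very hyperplane). The reduction locus is the complement of finitely many proper \emph{linear} subspaces of $I/\m I$ (the degree-one parts of the minimal primes of the fiber cone), so what you actually need is that $\alpha K/\m I$ is not one of those subspaces, and this you have not verified.

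The repair is immediate and essentially recovers the paper's argument. Assume $R$ is not Gorenstein (otherwise done). Then $K:\m\subseteq S\subseteq\overline{R}$ by Corollary~\ref{notgor}(1), so $I=\alpha(K:\m)\subseteq\alpha\overline{R}$; hence $(\alpha)$ is a reduction of $I$ and $\alpha\in\alpha K$ since $1\in K$. Because the conditions $\m I=\m Q$ and $I^2=QI$ are independent of the choice of principal reduction $Q$ (both are equivalent to numerical identities involving $\mu_R(I)$, $\ell_R(I/Q)$, and $\rme_1(I)$ via Proposition~\ref{ineq}), you may simply take $a=\alpha$, i.e.\ $u=1\in K$, and your argument concludes. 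The paper streamlines this further: rather than invoking Theorem~\ref{alid2} and then moving the reduction, it sets $J=a(K:\m)$ for the fixed reduction $(a)$ of the canonical ideal $aK$, notes that $(a)$ is already a reduction of $J$ by the same containment $K:\m\subseteq S$, observes $J^\vee\cong\m$, and applies Proposition~\ref{alid1} directly to obtain $\m I\subseteq\m J\subseteq(a)$.
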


\begin{proof} We may assume that the residue class field $R/\m$ of $R$ is infinite and the ring $\rmQ (\widehat{R})$ is Gorenstein. Hence  there exists an $R$--submodule $K$ of $\rmQ (R)$ such that $R\subseteq K\subseteq \overline R$ and $K \cong \rmK_R$ as $R$--modules. We maintain the same notation as in Setting \ref{setting}. Hence $S = R[K]$. We choose a non-zerodivisor $a\in {\m}$ so that $aK\subsetneq R$. Let $I=aK$,  $Q=(a)$, and $J=I:_R {\m}$. Therefore $I$ is a canonical ideal of $R$.

(1) $\Rightarrow$ (2)
We may assume that $R$ is not a Gorenstein ring. By Theorem \ref{alid2} we may choose an $\m$--primary ideal $\fka$ of $R$ and $b \in \fka$  so that $\fka^2 = b \fka$, $\m \fka = \m b$, and $\m \cong \fka^\vee$, where $*^\vee =\Hom_R(*,\rmK_R)$. Since $\ell_R((I : \m)/I)=1$, we get $J = I : \m  = a(K : \m)$. On the other hand, since $K : \m \subseteq S$ by Corollary \ref{notgor} (1), we get 
$$Q = (a) \subseteq I = aK \subseteq J = a(K : \m) \subseteq aS \subseteq a\overline{R}.$$  Hence $Q$ is also a reduction of $J$. Now notice that $\m \cong J^\vee$, since $J =I : \m \cong \m^\vee$. Then, because $R\ltimes J^\vee$ is an almost Gorenstein ring,  we get $\m J \subseteq Q$ by Proposition \ref{alid1}, so that $\m I \subseteq \m J \subseteq Q$. Hence $R$ is an almost Gorenstein ring.

(2) $\Rightarrow$ (1)
By Corollary \ref{alidg} we may assume that $R$ is not a Gorenstein ring. Choose a regular element $b \in \m$ so that $b S \subsetneq R$ and put $\fka = bS$. Then $b \in \fka$, $\fka^2 = b \fka$, and $\m \fka \subseteq (b)$, since $R$ is an almost Gorenstein ring. Now notice that $S = K :\m \cong \m^\vee$ (Theorem \ref{algorcor}) and we have $\m \cong S^\vee \cong \fka^\vee$. Hence $R \ltimes \m$ is an almost Gorenstein ring by Proposition \ref{alid1}.

Since the maximal ideal of $R \ltimes \m$ is $\m \times \m$, we get 
\begin{eqnarray*}
v(R\ltimes \m)&=&\ell_{R\ltimes \m}(({\m}\times{\m})/({\m}\times{\m})^2)\\
&=& \ell_R(({\m}\oplus {\m})/({\m}^2\oplus {\m}^2))\\
&=& 2\ell_R({\m}/{\m}^2)\\
&=& 2v(R),
\end{eqnarray*}
which proves the last equality.
\end{proof}

We need the following.

\begin{lem}\label{alidm} The following conditions are equivalent.
\begin{enumerate}[{\rm (1)}]
\item $R \ltimes \m$ is a Gorenstein ring.
\item $R$ is a discrete valuation ring.
\end{enumerate}
\end{lem}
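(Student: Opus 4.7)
The plan is to reduce both directions to the standard fact (due to Reiten) that for a finitely generated nonzero $R$-module $M$, the idealization $R \ltimes M$ is Gorenstein if and only if $M \cong \rmK_R$ as $R$-modules. This fact was already invoked just before Proposition \ref{alid1}, so we may take it for granted.

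For the easy direction (2) $\Rightarrow$ (1), suppose $R$ is a discrete valuation ring. Then $R$ is Gorenstein, so $\rmK_R \cong R$. Since $\m$ is principal and generated by a nonzerodivisor, $\m \cong R \cong \rmK_R$ as $R$-modules. Hence $R \ltimes \m$ is Gorenstein.

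For the reverse direction (1) $\Rightarrow$ (2), suppose $R \ltimes \m$ is Gorenstein. Then $\m \cong \rmK_R$ as $R$-modules. The key step is to deduce from this isomorphism that $\m:\m = R$ (equality inside $\rmQ(R)$). Indeed, the natural inclusion $R \hookrightarrow \m:\m$ is identified with the canonical map $R \to \End_R(\m)$, and an isomorphism $\m \cong \rmK_R$ gives $\End_R(\m) \cong \End_R(\rmK_R) = R$ (see \cite[Bemerkung 2.5]{HK}), so $R = \m:\m$. Now assume for contradiction that $R$ is not a discrete valuation ring. By the contrapositive of Lemma \ref{DVR} (3) we then have $\m:\m = R:\m$, whence $R = R:\m$. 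But Lemma \ref{DVR} (2) gives $\ell_R((R:\m)/R) = \rmr(R) \ge 1$, which is a contradiction. Therefore $R$ is a discrete valuation ring.

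There is no real obstacle here: the argument is short once we combine the idealization criterion with the self-duality $\End_R(\rmK_R)=R$ and Lemma \ref{DVR}. The only point requiring minor care is to ensure that the isomorphism $\Hom_R(\m,\m) \cong \m:\m$ is really an equality of subrings of $\rmQ(R)$, which follows because $\m$ is $\m$-primary and hence contains a nonzerodivisor, so multiplication by elements of $\rmQ(R)$ realizes every $R$-endomorphism of $\m$.
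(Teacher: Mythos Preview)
Your proof is correct. Both directions are handled soundly; in particular, for $(1)\Rightarrow(2)$ your identification $\m:\m\cong\End_R(\m)\cong\End_R(\rmK_R)=R$ is valid (the conjugation isomorphism $\End_R(\m)\to\End_R(\rmK_R)$ commutes with the natural maps from $R$, so the inclusion $R\hookrightarrow\m:\m$ is forced to be an equality), and the subsequent use of Lemma \ref{DVR} (2) and (3) is legitimate.

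The paper's argument for $(1)\Rightarrow(2)$ takes a different route: from $\m\cong\rmK_R$ it deduces that $\m$ has finite injective dimension and then invokes a result of Burch \cite[p.~947, Corollary 3]{Bu} to conclude that $R$ is regular. Your approach avoids this external citation and instead stays entirely within the toolkit already developed in the paper (the self-duality $\End_R(\rmK_R)=R$ from \cite{HK} and Lemma \ref{DVR}). This makes your argument more self-contained; the paper's argument, on the other hand, is a one-line appeal to a classical homological fact and does not need the case analysis via Lemma \ref{DVR} (3).
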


\begin{proof}
If $R\ltimes {\m}$ is a Gorenstein ring, then $\m \cong \rmK_R$ as $R$--modules and hence the $R$--module $\m$ has finite injective dimension, which yields that $R$ is a discrete valuation ring (\cite[p.947, Corollary 3]{Bu}). If $R$ is a discrete valuation ring, then $\m \cong R$, so that $R\ltimes \m$ is  a Gorenstein ring.
\end{proof}

Let us note examples of almost Gorenstein rings obtained by idealization.

\begin{ex} Let $n\ge 0$ be an integer. We  put 
$$
 R_n = \left\{
 \begin{array}{ll}
 R & (n=0),\\
 R\ltimes \m & (n=1),\\
 (R_{n-1})_1 & (n>1).
 \end{array}
 \right.
$$ Then the following assertions hold true.
\begin{enumerate}[{\rm (1)}]
\item If $R$ is a Gorenstein ring, then $R_n$ is an almost Gorenstein ring for all $n\ge 0$.
\item $R_n$ is not a discrete valuation ring for every $n\ge 1$. Therefore by Lemma \ref{alidm} $R_{n+1}$ is not a Gorenstein ring for all $n\ge 1$.
\end{enumerate}
\end{ex}

\begin{ex}
The almost Gorenstein ring $R_3$ in Example \ref{ex} is isomorphic to $$k[[X,Y]]/(Y^2) \ltimes (X,Y)/(Y^2),$$ where $k[[X,Y]]$ denotes the formal power series ring over the field $k$.
\end{ex}

We close this paper with  the following.

\begin{cor} \label{Bgor} Suppose that $R$ is an almost Gorenstein ring but not a Gorenstein ring and the residue class field $R/\m$ of $R$ is infinite. Choose an $R$--submodule $K$ of $\rmQ (R)$ so that  $R\subseteq K\subseteq \overline R$ and $K \cong \rmK_R$ as $R$--modules. Let  $S=R[K]$. Then the following conditions are equivalent.
\begin{enumerate}[{\rm (1)}]
\item $S$ is a Gorenstein ring.
\item $R\ltimes S$ is an almost Gorenstein ring.
\item $v(R) = e(R)$.
\item $v(R\ltimes S) = e(R\ltimes S)$.
\end{enumerate}
When this is the case, $S = \m : \m$, $R\ltimes S$ is not a Gorenstein ring, and   $v(R\ltimes S ) = 2v(R)$.
\end{cor}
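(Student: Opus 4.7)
My plan is to exploit the fact, from Theorem \ref{algorcor}, that since $R$ is almost Gorenstein but not Gorenstein we already know $S = \m:\m$ and $\ell_R(S/K) = 1$. By Lemma \ref{3.4}(2) the second equality translates to $\ell_R(R/\fkc) = 1$, so $\fkc = R:S = \m$. Both $S = \m : \m$ and this identity $\fkc = \m$ will be the workhorses of what follows.

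The equivalence (1) $\Leftrightarrow$ (3) is then immediate from Theorem \ref{gorm:m}: since $R$ is almost Gorenstein and $\m:\m = S$, we have $S$ Gorenstein $\Longleftrightarrow v(R) = e(R)$. For (2) $\Leftrightarrow$ (3) I would invoke Theorem \ref{alid2} with $M = S$. From $\fkc = \m$ and $\fkc \cong \Hom_R(S, \rmK_R) = S^{\vee}$ (Lemma \ref{3.4}) I get $S \cong \fkc^\vee = \m^\vee$ by canonical duality. Thus any $\m$-primary ideal $I$ with $I^\vee \cong S$ must satisfy $I \cong \m$, and up to isomorphism we may just take $I = \m$; then the two conditions $I^2 = QI$ and $\m I = \m Q$ of Theorem \ref{alid2} both collapse to $\m^2 = Q\m$ for a principal reduction $Q = (a)$ of $\m$, which is exactly $v(R) = e(R)$.

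For (3) $\Leftrightarrow$ (4) the main point is a direct computation on $A := R\ltimes S$ with maximal ideal $\mathfrak M = \m \times S$. Using $\m S = \m$ (which holds because $S = \m : \m$), one checks $\mathfrak M^2 = \m^2 \times \m S = \m^2 \times \m$, giving $v(A) = v(R) + \mu_R(S)$, and that for $(a,0)\in \mathfrak M$ with $(a)$ a principal reduction of $\m$, the ideal $(a,0)A$ is a principal reduction of $\mathfrak M$. One then observes $\mathfrak M^2 = (b,s)\mathfrak M$ iff $\m^2 = b\m$ and $bS + s\m = \m$: the forward direction (4) $\Rightarrow$ (3) is immediate by projecting onto the first coordinate. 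For (3) $\Rightarrow$ (4), from $\m^2 = a\m$ I argue $\m = aS$: indeed $\m \cdot (\m/a) = \m^2/a = \m \subseteq \m$ shows $\m/a \subseteq \m:\m = S$, i.e., $\m \subseteq aS$, and the reverse inclusion is $aS \subseteq \m S = \m$. With $\m = aS$ the condition $aS + s\m = \m$ holds for any $s$, and $\mathfrak M^2 = (a,0)\mathfrak M$.

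The closing remarks then follow: $S = \m:\m$ is restated from Theorem \ref{algorcor}; $A = R\ltimes S$ is not Gorenstein because $A$ Gorenstein would force $S \cong \rmK_R$, but $S \cong \m^\vee$ and $\rmK_R \cong R^\vee$, so by canonical duality this would give $\m \cong R$, i.e., $R$ a DVR, contradicting the hypothesis that $R$ is not Gorenstein; and $v(A) = 2 v(R)$ is read off from $\mu_R(S) = \mu_R(aS) = \mu_R(\m) = v(R)$ using $\m = aS$. The only step that requires any care is the identity $\m = aS$ driving both the computation of $v(A)$ and the implication (3) $\Rightarrow$ (4); everything else is bookkeeping on top of the earlier structure theorems.
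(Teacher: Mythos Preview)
Your argument is correct. Two minor remarks and one comparison with the paper's approach.

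\textbf{Minor imprecision.} Your displayed equivalence ``$\mathfrak M^2 = (b,s)\mathfrak M$ iff $\m^2 = b\m$ and $bS + s\m = \m$'' is not quite right for general $s$: since $(b,s)(m,t) = (bm, bt+sm)$, the set $(b,s)\mathfrak M$ is $\{(bm, bt+sm): m\in\m,\ t\in S\}$, not $b\m \times (bS + s\m)$. A short computation shows the correct equivalence is $\m^2 = b\m$ and $\m = bS$, independently of $s$. This does not damage your proof, because you only use the forward direction on first coordinates (valid by projection) and the backward direction with $s=0$ (where the two descriptions coincide).

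\textbf{A simpler route for (2) $\Leftrightarrow$ (3).} Once you know $S\cong \m^\vee$, you can apply Proposition \ref{alid1} directly with $I=\m$: it gives $R\ltimes S \cong R\ltimes \m^\vee$ almost Gorenstein iff $\m\cdot\m = \m Q$ and $\m^2 = Q\m$, both of which are $\m^2 = Q\m$. This avoids the detour through Theorem \ref{alid2} and the ``up to isomorphism take $I=\m$'' step.

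\textbf{Comparison with the paper.} The paper proceeds exactly as you do for (1) $\Leftrightarrow$ (3). For (3) $\Leftrightarrow$ (4), the paper takes the reduction $(a,0)$ of $\n = \m\times S$ and reduces to showing $\m S = aS$; it then invokes the proof of Proposition \ref{alid1} (the equivalence $IM = QM \Leftrightarrow I^2 = QI$ for $M = I^\vee$) applied to $I=\m$. For $v(A)=2v(R)$ the paper computes $\mu_R(S) = \rmr_R(\m) = v(R)$ via duality and \cite[Satz 6.10]{HK}. Your direct computation $\m = aS$ (from $\m/a \subseteq \m:\m = S$ and $aS \subseteq \m S = \m$) replaces both appeals at once: it gives $\m S = \m = aS$ for (3) $\Rightarrow$ (4), and $\mu_R(S) = \mu_R(aS) = \mu_R(\m) = v(R)$ for the embedding-dimension count. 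This is a genuinely more elementary argument than the paper's.
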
 

\begin{proof} We get by Theorem \ref{algorcor} that $S = K:{\m} \cong {\m}^{\vee} := \Hom_R(\m,\rmK_R)$, while $S = \m : \m$. Therefore  Theorem \ref{gorm:m} (resp. Proposition \ref{alid1}) shows (1) $\Leftrightarrow$ (3) (resp. (2) $\Leftrightarrow$ (3)). Let $a \in \m$ such that $Q=(a)$ is a reduction of $\m$ and put $A = R \ltimes S$. Then $f = (a,0) \in A$ generates a reduction of the maximal ideal $\n := \m \times S$ of $A$, so that $v(A) = e(A)$ if and only if $\n^2 = f \n$, that is  $\m^2 = Q\m$ and $\m S = QS$. Since $S \cong \m^\vee$,  by Proof of Proposition \ref{alid1} we have $\m S = QS$, once $\m^2 = Q\m$.  Hence  we get  the equivalence (3) $\Leftrightarrow$ (4).

Let us check the last assertion. We have $S = \m :\m$, as we have mentioned above. Suppose that  $R \ltimes S$ is a Gorenstein ring. Then $S \cong \rmK_R$ (\cite{R}). Hence $\m \cong S^\vee \cong \rmK_R^\vee \cong R$ (\cite[Satz 6.1]{HK}). This is impossible, because  $R$ is not a Gorenstein ring.
To see $v(A) = 2v(R)$, notice that $\n^2= \m^2 \times \m S$. We then have 
\begin{eqnarray*}
v(A) &=& \ell_A(\n/\n^2)\\
&=& \ell_R(\n/\n^2)\\
&=& \ell_R((\m \oplus S)/(\m^2 \oplus \m S))\\
&=& v(R) + \mu_R(S),
\end{eqnarray*}
while because $S \cong \m^\vee$ and $\m^2 = Q\m$, by \cite[Bemerkung 1.21, Satz 6.10]{HK} we get 
$$\mu_R(S) = \rmr_R(\m) = v(R),$$
where $\rmr_R(\m) = \ell_R(\Ext_R^1(R/\m,\m))$ denotes the Cohen-Macaulay type of the $R$--module $\m$. Hence  $v(A) = 2v(R)$ as required.
\end{proof}

\begin{ex} Suppose that $R$ possesses the canonical module $\rmK_R$. For each $n\ge 0$ let  
$$
 R_n = \left\{
 \begin{array}{ll}
 R & (n=0),\\
 R\ltimes \m^\vee& (n=1),\\
 (R_{n-1})_1 & (n>1),
 \end{array}
 \right.
 $$
where $\m^\vee = \Hom_R(\m,\rmK_R)$. We assume that $R$ is an almost Gorenstein ring but not a Gorenstein ring and that the residue class field $R/\m$ of $R$ is infinite. Then the following assertions hold true.
\begin{enumerate}[{\rm (1)}] 
\item $R_n$ is an almost Gorenstein ring with $v(R_n)=2^nv(R)$ for all  $n\ge 0$.
\item $R_n$ is not a Gorenstein ring for any $n \ge 0$.
\end{enumerate}
\end{ex}

\begin{proof}
Choose an $R$--submodule $K$ of $\rmQ (R)$ so that  $R\subseteq K\subseteq \overline R$ and $K \cong \rmK_R$ as $R$--modules and put  $S=R[K]$. 
Then, since $\m^\vee \cong S$ by Theorem \ref{algorcor},  the assertions immediately follow from Corollary \ref{Bgor} by induction on $n$.  
\end{proof}






\end{document}